\documentclass[12pt,a4paper]{amsart}
\usepackage{amssymb}
\usepackage{amsmath}
\usepackage{xcolor}
\usepackage{amscd}
\usepackage{pstricks}
\usepackage[makeroom]{cancel}
\usepackage{pst-node}


\thispagestyle{empty}
\usepackage{geometry}
\geometry{hmargin=30mm, vmargin=25mm}
\newtheorem{thm}{Theorem}[section]
\newtheorem{cor}[thm]{Corollary}
\newtheorem{lem}[thm]{Lemma}

\theoremstyle{definition}

\newtheorem{defn}[thm]{Definition}

\usepackage{graphics}
\usepackage{graphicx}

\newtheorem{rem}[thm]{Remark}

\numberwithin{equation}{section}



\begin{document}
\keywords{subriemannian geometry, sublaplacian, heat kernel}
\subjclass[2010]{53C17, 35P20}

\title[Subelliptic heat kernel on $\mathbb{S}^7$]{Trivializable and quaternionic subriemannian structure on $\mathbb{S}^7$ and subelliptic heat kernel}

\author{W. Bauer and A. Laaroussi}
 \thanks{Both authors have been supported by the priority program SPP 2026 {\it geometry at infinity} of Deutsche Forschungsgemeinschaft (project number BA 3793/6-1).}


\address{Wolfram Bauer\endgraf
Institut f\"{u}r Analysis, Leibniz Universit\"{a}t  \endgraf
Welfengarten 1, 30167 Hannover, Germany \endgraf
}
\email{bauer@math.uni-hannover.de}

\address{Abdellah Laaroussi \endgraf
 Institut f\"{u}r Analysis, Leibniz Universit\"{a}t \endgraf
Welfengarten 1, 30167 Hannover, Germany\endgraf
}
\email{abdellah.laaroussi@math.uni-hannover.de}

\begin{abstract}
 On the seven dimensional Euclidean sphere $\mathbb{S}^7$ we compare two subriemannian structures with regards to various geometric 
and analytical properties. The first structure is called trivializable and the underlying distribution $\mathcal{H}_T$ is induced by a Clifford module 
structure of $\mathbb{R}^8$. More precisely, $\mathcal{H}_T$ is rank $4$,  bracket generating of step two and generated by globally defined vector fields. 
The distribution $\mathcal{H}_{Q}$ of the second structure  is of rank 4 and step two as well and obtained as the horizontal distribution in the quaternionic Hopf fibration $\mathbb{S}^3\hookrightarrow\mathbb{S}^7\rightarrow\mathbb{S}^4$. Answering a question in \cite{MaMo11} we first show that $\mathcal{H}_{Q}$ does not admit a 
global nowhere vanishing smooth section. In both cases we determine the Popp measures, the  intrinsic sublaplacians $\Delta_{\textup{sub}}^T$ and 
$\Delta_{\textup{sub}}^{Q}$ and the nilpotent approximations. We conclude that both subriemannian structures are not locally isometric and we discuss properties of the isometry group. By determining the first heat invariant of the sublaplacians it is shown that both structures are also not isospectral in the subriemannian sense.
\end{abstract}
\maketitle
\tableofcontents
\thispagestyle{empty}
\section{Introduction}
Let $(M,\mathcal{H},\langle\cdot,\cdot\rangle)$ be a subriemannian manifold, i.e. $M$ is a smooth connected orientable manifold endowed with a bracket generating subbundle $\mathcal{H}$ of the tangent bundle $TM$. Moreover,  $\langle\cdot,\cdot\rangle$ denotes a family of inner products on $\mathcal{H}$ which smoothly vary with the base point.  From a geometric point of view one is led to the problem of defining and classifying subriemannian structures of specific types on a given manifold 
(e.g. up to local subriemannian isometries) or, to compare them with regards to various of their geometric properties, e.g. \cite{BGMR,BFI0,MaMo11,Mo}. Any regular subriemannian structure on $M$ induces a hypoelliptic sublaplacian $\Delta_{\textup{sub}}$ which intrinsically is defined based on the Popp measure construction, \cite{ABB,ABGF,Bariz}. From an analytical point of view one may study the diffusion on $M$ generated by the heat operator induced by $\Delta_{\textup{sub}}$. Which geometric data can be recovered from such analytically defined objects? Extending a classical problem in Riemannian geometry and in the case of a compact manifold $M$, one may ask whether two non-isometric subriemannian structures are isospectral with respect to their induced sublaplacians (e.g. see \cite{BFIL} for positive examples). 
\vspace{1mm}\par
In the special case of a Euclidean 
sphere $M= \mathbb{S}^N$ of dimension $N$ typical methods (depending on $N$) of installing a subriemannian geometry on $M$ use a Lie group structure ($N=3$), a contact structure ($N$ odd), a principle bundle structure such as the Hopf fibration $\mathbb{S}^{2n+1} \rightarrow \mathbb{CP}^n$ ($N$ odd) or the quaterionic Hopf fibration, $\mathbb{S}^{4n+3} \rightarrow \mathbb{HP}^n$, CR-geometry or a suitable number of canonical vector fields in \cite{A} ($N=3,7,15$). In the lowest dimensional case $N=3$ all these structures essentially coincide as was pointed out in \cite{MaMo11}. 
\vspace{1mm}\par 
The present paper compares two of the above mentioned subriemannian structures on $M= \mathbb{S}^7$ and, in particular, it extends results in \cite{BFI}. 
Therein the authors have shown that the $N$-dimensional Euclidean sphere $\mathbb{S}^N$ carries a trivializable subriemannian structure induced by a Clifford module structure of $\mathbb{R}^{N+1}$ only in dimensions $N=3,7,15$. Moreover, in this paper the spectum of a corresponding second order differential operator (in \cite{BFI} it is called sublaplacian) has been studied. However, it should be pointed out that this sublaplacian differs from the intrinsic one which we consider here by a first order term. 
\vspace{1ex}\par 
We recall the construction of a bracket generating trivial rank-$k$ distribution on a sphere of dimension $N=3,7,15$: Consider a family of $(N+1)\times(N+1)$ skew-symmetric real matrices $A_1,\cdots,A_k$ such that
$$A_iA_j+A_jA_i=-2\delta_{ij} \hspace{3ex} \mbox{\it for} \hspace{3ex} i,j=1,\cdots,k.$$
Then a collection of $k$ linear vector fields on $\mathbb{S}^N$ that are orthonormal at each point of the sphere can be defined  in global coordinates of 
$\mathbb{R}^{N+1}$ by:
 $$X(A_l):=\sum_{i,j=1}^{N+1}(A_l)_{ij}x_j\frac{\partial}{\partial x_i}, \hspace{4ex} (l=1,\cdots,k).$$
According to the results in \cite{BFI} the rank $k$ distribution 
$$\mathcal{H}:=\text{Span}\{X(A_l):l=1,\cdots,k\} \subset T\mathbb{S}^N$$ 
is of step two and trivial as a vector bundle by definition.  Moreover, $\mathcal{H}$ is bracket generating only for particular choices of $N$ and $k$. 
Whereas in the case $N=3$ and $N=15$ a trivializable bracket generating distribution $\mathcal{H}\subsetneq T\mathbb{S}^N$ of the above kind must have rank two and rank eight, respectively, there are three trivializable subriemannian structures on $\mathbb{S}^7$ of rank $4,5$ and $6$. For $N=3$ such a subriemannian structure on $\mathbb{S}^3$ is isometric to the one induced by the well-known Hof fibration, see \cite{MaMo11}.  
 $$\mathbb{S}^1\hookrightarrow\mathbb{S}^3\longrightarrow\mathbb{S}^2.$$
Under some geometric aspects the above trivializable SR structures on $\mathbb{S}^7$ have been studied in \cite{BT}. More precisely, the authors analyzed the corresponding geodesic flow and constructed a family of normal subriemannian geodesics  (i.e. locally length minimizing curves induced from the geodesic equations).\ \par
 In the present paper, we analyze a trivializable subriemannian structure on $\mathbb{S}^7$ of rank $4$ and we compare it with the quaternionic contact  structure of rank $4$ on $\mathbb{S}^7$ induced by the quaternionic Hopf fibration \cite{MaMo12}
 $$\mathbb{S}^3\hookrightarrow\mathbb{S}^7\rightarrow\mathbb{S}^4.$$
 Answering a question in \cite{MaMo11} we first show that the horizontal distribution in the quaternionic Hopf fibration is not trivial. It does not even admit a 
single global nowhere vanishing smooth section. In fact, this follows from results in topological K-theory in \cite{Mah}.  We show that the so-called  tangent groups i.e. local approximations of the trivializable subriemannian structure on $\mathbb{S}^7$  may change from point to point. As a consequence the subriemannian isometry group cannot act transitively on $\mathbb{S}^7$. Furthermore, the trivializable distribution is of elliptic type  (see \cite{Mo} for a definition) inside an open dense subset. Hence, by a result of R. Montgomery in \cite{Mo}, it follows that the subriemannian isometry group is finite dimensional with dimension bounded by $21$. 
\vspace{1mm}
\par 
 We calculate the Popp measures on 
$\mathbb{S}^7$ induced by the trivializable and quaternionic contact structures, respectively, and we determine the intrinsic sublaplacians.  
Moreover,  by applying recent results due to Y.C. de Verdi\'{e}re, L. Hillairet and E. Tr\'{e}lat in \cite{Verd} combined with an explicit form of the subelliptic heat kernel on step two nilpotent Lie groups in \cite{BGG,CCFI}  we compute the first heat invariants appearing in the small-time asymptotics of the heat trace associated to the intrinsic sublaplacians.  Based on these data we can show that the subriemannian structures (quaternionic contact and trivializable) on $\mathbb{S}^7$ are neither locally isometric nor isospectral with respect to the intrinsic sublaplacians. 
\vspace{1mm}
\par 
 Finally, we mention that an explicit form of the heat kernel (i.e. fundamental solution to the heat operator) of the intrinsic sublaplacian 
induced from the trivializable subriemannian structure is unknown. Since the corresponding subriemannian isometry group does not act transitively on $\mathbb{S}^7$ it would be not sufficient to only calculated it at a fixed point. This is in contrast to the quaternionic contact structure. In the latter case the isometry  group acts transitively and the 
subelliptic heat kernel has been obtained explicitly in \cite{Baud_Wang}. Moreover, the explicit form of the heat kernel has been used in \cite{Baud_Wang} to obtain 
some of the heat invariants, i.e. in this case the analysis does not rely on the approximation methods in Section \ref{small}. 
\vspace{1ex}\par 
 
The paper is organized as follows:  Section \ref{subgeo} provides basic concepts and definitions in subriemannian geometry. In Sections \ref{quaho} and \ref{trist} we recall the construction of two different subriemannian structures on $\mathbb{S}^7$ and we list some of their properties. Then we compute the Popp volume induced by these structures in Section \ref{pop}. In Section \ref{nilapp} we show that the tangent groups of $\mathbb{S}^7$ endowed with the trivializable subriemannian structure 
 may change from point to point and that this structure is not locally isometric to the quaternionic contact structure.   The type of the trivializable structure is determined in Section \ref{type} and this allows us to obtain a bound on the dimension of the isometry group. In Section \ref{small} we compute the first heat invariants in the small-time asymptotics of the heat trace by using an approximation method in \cite{Chpo,Verd}. Comparing both we show that the above subriemannian structures on 
$\mathbb{S}^7$ are not isospectral with respect to the sublaplacians.  In Chapter \ref{Chapter_9} we consider the (non-intrinsic) sublaplacian $\widetilde{\Delta}_{\textup{sub}}^T$ on $\mathbb{S}^7_T$ induced by the standard measure on $\mathbb{S}^7$. In Theorem \ref{Lemma_inclusion_of_spectra_sublaplace} we prove the inclusion $ \sigma(\Delta^{\textup{Q}}_{\textup{sub}}) \subset \sigma (\widetilde{\Delta}_{\textup{sub}}^{\textup{T}})$ of spectra where $\Delta^{\textup{Q}}_{\textup{sub}}$ denotes the sublaplacian corresponding to the quarternionic contact structure. However, we mention that both operators are not isospectral. Chapter \ref{Chapter_9}   extends former results in \cite{BFI}. 

 \section{Subriemannian geometry}\label{subgeo}
 We start recalling basic definitions in subriemannian geometry  \cite{ABB, Mo,S1,S2}. 
 \vspace{1mm}\par 
 A subriemannian manifold  (shortly: SR manifold) is a triple $(M,\mathcal{H},\langle \cdot,\cdot\rangle)$ where
 \begin{itemize}
 \item[(a)] $M$ is a connected smooth manifold of dimension $n$.
 \item[(b)] $\mathcal{H}$ is a smooth distribution of constant rank $k<n$  which we may identify with the sheaf of smooth vector fields tangent to $\mathcal{H}$ 
 (horizontal vector fields). We  assume that $\mathcal{H}$ is bracket generating, i.e. if we set for $j\geq 1$
 $$\mathcal{H}^{1} :=\mathcal{H}\text{ and } \mathcal{H}^{j+1}:=\mathcal{H}^{j}+[\mathcal{H},\mathcal{H}^{j}],$$
 then  for each $q \in M$  there is $p\in\mathbb{N}$ such that $\mathcal{H}_q^p=T_qM.$
 \item[(c)] $\langle \cdot,\cdot\rangle$ is a fiber inner product on $\mathcal{H}$, i.e.
 $$\langle \cdot,\cdot\rangle_q:\mathcal{H}_q\times\mathcal{H}_q\longrightarrow\mathbb{R}$$
  is an inner product for all $q\in M$ and it smoothly varies with $q\in M$.
 \end{itemize}

We call a subriemannian manifold $(M,\mathcal{H},\langle \cdot,\cdot\rangle)$  {\it regular}, if for all $j\geq 1$ the dimension of $\mathcal{H}^{j}_q$ does not depend on the point $q\in M$. Furthermore, a  regular SR manifold $M$ is said to be of step $r$ if $r$ is the smallest integer such that $\mathcal{H}^r=TM$.\ \\
In this work we only consider regular subriemannian manifolds of step $2$. Therefore we recall the required concepts only in this case.\ \\

A local  frame $\{X_1,\cdots,X_m,X_{m+1},\cdots,X_n\}$ is called {\it adapted}, if  the vector fields 
$X_1,\cdots,X_m$ form a local orthonormal frame of 
 $(\mathcal{H}, \langle \cdot, \cdot\rangle )$.
 \par 
Given two subriemannian manifolds $(M,\mathcal{H},\langle \cdot,\cdot\rangle)$ and $(M^\prime,\mathcal{H}^\prime,\langle \cdot,\cdot\rangle^\prime)$, we call a map $\phi:M\rightarrow M^\prime$ {\it horizontal} if   its differential maps $\mathcal{H}$ to $\mathcal{H}^\prime$, i.e. $\phi_\ast(\mathcal{H})\subseteq\mathcal{H}^\prime$. 
\begin{defn}  $\phi$ is called (local) {\it subriemannian isometry} if it is a horizontal (local) diffeomorphism such that 
$\phi_\ast:(\mathcal{H},\langle \cdot,\cdot\rangle)\rightarrow(\mathcal{H}^\prime,\langle \cdot,\cdot\rangle^\prime)$ becomes an isometry.
\end{defn}

To every  regular subriemannian manifold $M$ of step $2$, a  family of graded $2$-step nilpotent Lie algebras
$$\mathfrak{g}M:=\mathcal{H}\oplus\left(\mathcal{H}^2\slash\mathcal{H}\right)$$
is associated with Lie brackets induced by the Lie brackets of vector fields on $M$,  \cite{Mo}. Note that the defined Lie brackets respect the above grading, i.e.
$$[\mathcal{H},\mathcal{H}]\subseteq \mathcal{H}^2\slash\mathcal{H}\hspace{3ex} \text{\it  and }\hspace{3ex} [\mathcal{H},\mathcal{H}^2\slash\mathcal{H}]=[\mathcal{H}^2\slash\mathcal{H},\mathcal{H}^2\slash\mathcal{H}]=0.$$
Hence, $\mathfrak{g}M$ is a  smooth family
of Carnot Lie algebras. We call  $\mathfrak{g}M(q)=\mathfrak{g}M_q$ the {\it nilpotent approximation} of $(M, \mathcal{H}, \langle \cdot, \cdot \rangle)$ at  $q \in M$, \cite{Mo}.
\vspace{1mm}
\par 
For every  $q\in M$, the {\it tangent group} $\mathbb{G}M(q)$ of $M$ at $q$ is the unique connected, simply connected nilpotent Lie group corresponding to the Lie algebra $\mathfrak{g}M(q)$. Note that $\mathbb{G}M$ is 
 not assumed to be locally trivial. In particular, the Lie groups $\mathbb{G}M(q)$ might  be non-isomorphic at different base points $q\in M$. \\

On a $7$-dimensional manifold there is a particular class of distributions called {\it elliptic}. Such distributions are interesting from a geometric point of view because the induced geometry has always a finite dimensional symmetry group. In the following we briefly recall how they are defined (see \cite{Mo} for more details). 
Let $\mathcal{H}$ be a co-rank $3$, bracket generating distribution of step two on a $7$-dimensional manifold $M$ and let us consider the so-called {\it curvature} (linear) bundle map of $\mathcal{H}$
\begin{equation}\label{curvature_map}
F:\Lambda^2\mathcal{H}\longrightarrow TM\slash\mathcal{H}
\end{equation}
defined by $F(X,Y)=-[X,Y]$ mod $\mathcal{H}$ for $X,Y\in\mathcal{H}$. Write $\mathcal{H}^\perp\subset T^\ast M$ for the bundle of covectors that annihilate 
$\mathcal{H}$. We consider now the dual curvature map $\omega$:
\begin{equation}\label{dual_curvature_map}
\omega:=F^\ast:\mathcal{H}^\perp\longrightarrow\Lambda^2\mathcal{H}^\ast.
\end{equation}
Since $\mathcal{H}$ is bracket generating, the curvature map is onto. Furthermore, the real vector space $\Lambda^4\mathcal{H}^\ast$ is $1$-dimensional, hence the squared dual curvature map
\begin{align*}
\omega^2&:\mathcal{H}^\perp\longrightarrow\Lambda^4\mathcal{H}^\ast\\
\lambda&\longmapsto \omega(\lambda)\wedge\omega(\lambda)
\end{align*}
is a quadratic form on the $3$-dimensional space $\mathcal{H}^\perp$ with values in the $1$-dimensional vector space $\Lambda^4\mathcal{H}^\ast$. We say that $\mathcal{H}$ is {\it elliptic} if this quadratic form has signature $(3,0)$ or $(0,3)$. Note that we do not have a canonical choice of an element in $\Lambda^4\mathcal{H}^\ast$ and hence, the signature is only defined up to a sign $\pm$. In general, we say that $\mathcal{H}$ is of type $(r,s)$  if this quadratic form has signature $(r,s)$ or $(s,r)$.

If $\mathcal{H}$ is of elliptic type then it was proven in \cite{Mo} that the symmetry group is always finite-dimensional and the maximal dimension of such group is realized by $\mathbb{S}^7$  endowed with the quaternionic contact structure.\ \\

 On a subriemannian manifold $M$ (not necessarily regular) the definition of a sublaplacian requires the  choice of a smooth measure $\mu$ on $M$, \cite{ABGF, Bariz,Verd}. 
We denote by $\text{div}_\mu$ the divergence operator associated with the measure $\mu$ defined by
$$\mathcal{L}_X\mu=\text{ div}_\mu(X)\mu$$
for every smooth vector field $X$ on $M$. Then we can associate to $\mu$ a {\it sublaplacian} $\Delta_{\textup{sub}}^\mu$ defined as the hypoelliptic, second 
order differential operator \cite{ABGF,Hoe67}: 
$$\Delta_{\textup{sub}}^\mu f:=-\text{div}_\mu\left(\nabla_\mathcal{H} f\right)\text{ for }f\in C^\infty(M).$$
Here $\nabla_\mathcal{H}$ denotes the horizontal gradient with respect to the horizontal metric $\langle\cdot,\cdot\rangle$ on $\mathcal{H}$,  which is defined at  
$q \in M$ by the properties: 
\begin{equation*}
\nabla_{\mathcal{H}}(\varphi)\in \mathcal{H}_q \hspace{2ex}\mbox{\it and} \hspace{2ex}  \langle \nabla_{\mathcal{H}}(\varphi), v\rangle_q= d\varphi(v), \hspace{3ex} v\in \mathcal{H}_q, \hspace{1ex} 
\varphi \in C^{\infty}(M). 
\end{equation*}
 Since the subriemannian manifold $M$ is assumed to be regular, there is a canonical choice of smooth measure on $M$ called {\it Popp measure} $\mu= \mathcal{P}$. 
 The sublaplacian $\Delta_{\textup{sub}}^{\mathcal{P}}$ defined from the Popp measure then is called the {\it intrinsic sublaplacian}  \cite{ABGF,Bariz,Mo}.
 \vspace{1mm}\par 
Note that the sublaplacian is positive and if  the manifold $M$ endowed with the subriemannian distance is complete, then  $\Delta_{\textup{sub}}^{\mu}$ is 
essentially selfadjoint  on $C_0^{\infty}(M)$ with unique selfadjoint extension on $L^2(M,\mu)$ (see  \cite{S1,S2,Verd}). Therefore the heat semigroup 
 $$\left(e^{-\frac{t}{2}\Delta_{\textup{sub}}^\mu}\right)_{t>0}$$ 
 is a well-defined one-parameter family of bounded operators on $L^2(M,\mu)$. In the following, we denote by ${K_t(\cdot,\cdot)}$ the heat kernel of the operator 
 $e^{-\frac{t}{2}\Delta_{\textup{sub}}^\mu}$ which is smooth due to the hypoellipticity of $\Delta_{\textup{sub}}^\mu$, \cite{Hoe67}.\ \\
 
 We recall the following formula for the small-time asymptotic expansion of the heat kernel on the diagonal, \cite{Bena, Verd}: for all $N\in\mathbb{N}$ and $q\in M$,
 $${K_t(q,q)}=\frac{1}{t^{Q(q)/2}}\left(c_0(q)+c_1(q)t+\cdots+c_N(q)t^N+o(t^N) \right)\hspace{3ex} \text{ \it as }t\to 0,$$
 which also holds in a non-regular situation and for an  arbitrary smooth measure $\mu$ in the definition of the sublaplacian. 
 Moreover, under the assumption that the subriemannian manifold is regular, the functions $c_i$ are smooth in a neighbourhood of $q$. Here $Q$ denotes the Hausdorff dimension of the metric space $(M,d)$ where $d$ is the subriemannian distance ({\it Carnot-Carath\'{e}odory distance}) on $M$, see \cite{Mo}. 
\section{Quaternionic Hopf structure}\label{quaho}
Let $\mathbb{H}\simeq \mathbb{R}^4$ denote the quaternionic space 
$$\mathbb{H}:=\{x+y{\bf i}+z{\bf j}+\omega {\bf k}:x,y,z,\omega\in\mathbb{R}\},$$
where ${\bf i}^2={\bf j}^2={\bf k^2}=-1$ and ${\bf ij}=-{\bf ji}={\bf k}$, ${\bf jk}=-{\bf kj}={\bf i}$ and ${\bf ki}=-{\bf ik}={\bf j}$.\ \\
For $n\geq 0$, we consider the $(n+1)$-dimensional quaternionic space $\mathbb{H}^{n+1}$ as a right $\mathbb{H}$-module with the hermitian form:
$$\langle p,q\rangle_{\mathbb{H}}:=\sum_{l=0}^{n}\overline{p_l}\cdot q_l$$
for $p=(p_0,\cdots,p_n),q=(q_1,\cdots,q_n)\in\mathbb{H}^{n+1}.$ The real part of this hermitian form which we denote by $\langle\cdot,\cdot\rangle$, is the usual  real inner product on $\mathbb{H}^{n+1}$, corresponding to the identification $\mathbb{H}^{n+1}\cong \mathbb{R}^{4(n+1)}$.
\par 
Let us consider the sphere $\mathbb{S}^{7}$ embedded into $\mathbb{H}^{2}$ as the set of elements of norm $1$:
$$\mathbb{S}^{7}= \{q=(q_0,q_1)\in\mathbb{H}^{2}:\|q_0\|_{\mathbb{H}}^2+\|q_1\|_{\mathbb{H}}^2=1\}.$$
There is a natural  diagonal right action of $\mathbb{S}^3$ on $\mathbb{S}^{7}$ which induces the quaternionic Hopf fibration:
$$\mathbb{S}^3\longrightarrow\mathbb{S}^{7}\longrightarrow\mathbb{S}^4.$$
The {\it quaternionic Hopf distribution} $\mathcal{H}_Q$ is the corank $3$ connection of this $\mathbb{S}^3$-principal bundle.  It is given by the orthogonal complement to the following orthonormal vector fields  induced by the right multiplication with the curves $e^{t{\bf i}}, e^{t{\bf j}}, e^{t{\bf k}}$: 
$$V_{\bf i}(q)=-y_0\partial_{x_0}+x_0\partial_{y_0}+\omega_0\partial_{z_0}-z_0\partial_{\omega_0}-y_1\partial_{x_1}+x_1\partial_{y_1}+\omega_1\partial_{z_1}-z_1\partial_{\omega_1}$$
$$V_{\bf j}(q)=-z_0\partial_{x_0}-\omega_0\partial_{y_0}+x_0\partial_{z_0}+y_0\partial_{\omega_0}-z_1\partial_{x_1}-\omega_1\partial_{y_1}+x_1\partial_{z_1}+y_1\partial_{\omega_1}$$
$$V_{\bf k}(q)=-\omega_0\partial_{x_0}+z_0\partial_{y_0}-y_0\partial_{z_0}+x_0\partial_{\omega_0}-\omega_1\partial_{x_1}+z_1\partial_{y_1}-y_1\partial_{z_1}+x_1\partial_{\omega_1}$$
at each $q=(x_0,y_0,z_0,\omega_0,x_1,y_1,z_1,\omega_1)\in\mathbb{S}^{7}$ and with respect to the standard Riemannian metric of $\mathbb{S}^{7}$.
\vspace{1mm} \par
As is well-known the quaternionic Hopf distribution $\mathcal{H}_Q$ is bracket generating,  \cite{BFI0, MaMo12,MaMo11}. Moreover, if we endow $\mathcal{H}_Q$ with the pointwise inner product obtained by restriction from the standard Riemannian metric we obtain a subriemannian structure on $\mathbb{S}^{7}$  which we call 
{\it quaternionic contact structure}. In the following, we write $\mathbb{S}^{7}_Q$ for the sphere $\mathbb{S}^{7}$ endowed with this subriemannian structure. 
\ \par
Note that $\mathbb{S}^{7}_Q$ can also be considered as a quaternionic contact manifold as follows. Let $\eta_{\bf i},\eta_{\bf j},\eta_{\bf k}$ denote the dual frame of the frame $V_{\bf i},V_{\bf j},V_{\bf k}$. Then the quaternionic Hopf distribution $\mathcal{H}_Q$ is locally given by 
$$\mathcal{H}_Q=\bigcap_{{\bf l}\in\{{\bf i},{\bf j},{\bf k}\}}\text{Ker}(\eta_{\bf l}).$$
\par
Furthermore, if we denote by $I_{\bf i},I_{\bf j},I_{\bf k}$ the left multiplications by ${\bf i},{\bf j},{\bf k}$, then it is known that $\{I_{\bf l}:{\bf l}\in\{{\bf i},{\bf j},{\bf k}\}\}$ are almost complex structures satisfying the quaternionic relations compatible with the metric on $\mathcal{H}_Q$, i.e.
$$2\langle I_{\bf l}X,Y\rangle=d\eta_{\bf l}(X,Y)$$
for $X,Y\in\mathcal{H}_Q$ and ${\bf l}\in\{{\bf i},{\bf j},{\bf k}\}$.\ \\

 Recall that the symplectic group ${\bf Sp}(2)$ is the subgroup of $\mathbb{H}$-linear elements of the orthogonal group ${\bf O}(8)$ which preserve the quaternionic inner product. Note that this is a subgroup of the group of all subriemannian isometries  $\mathcal{I}(\mathbb{S}^{7}_Q)$ of $\mathbb{S}^{7}_Q$. 
Hence, by representing elements of ${\bf Sp}(2)$ as $2\times 2$ matrices whose rows build an $\mathbb{H}$-orthonormal basis of $\mathbb{H}^{2}$, we see that ${\bf Sp}(2)$ (and hence $\mathcal{I}(\mathbb{S}^{7}_Q)$) acts transitively on $\mathbb{S}^{7}$.\ \\

The tangent bundle of the sphere $\mathbb{S}^7$ and the orthogonal complement of the quaternionic Hopf distribution $\mathcal{H}_Q$ in $T\mathbb{S}^7$ are both trivial as vector bundles. Hence it is natural to ask wether $\mathcal{H}_Q$ is trivial itself or whether $\mathcal{H}_Q$ admits  at least one  globally defined and nowhere vanishing 
smooth vector field.  In fact, this question was posed as an open problem in \cite[p. 1018]{MaMo11} and will be answered below.\ \\

Given a globally defined smooth vector field $X$ on $\mathbb{S}^{7}$, we consider it as a  smooth function $X: \mathbb{S}^{7}\longrightarrow\mathbb{H}^{2}$ 
such that $$\langle q,X(q)\rangle=0\hspace{3ex}  \text{ \it for all } \hspace{3ex} q\in\mathbb{S}^{7}.$$
\begin{defn}
Let $X$ be a globally defined vector field on $\mathbb{S}^{7}$. We call $X$ a {\it quaternionic vector field} on $\mathbb{S}^{7}$ if $\langle q,X(q)\rangle_\mathbb{H}=0$ 
for all $q\in\mathbb{S}^{7}$. 
\end{defn}
The next lemma states that the quaternionic Hopf distribution is precisely quaternionic tangent space of the sphere:
\begin{lem}\label{quat}
Horizontal vector fields on $\mathbb{S}^{7}$ are the quaternionic vector fields.
\end{lem}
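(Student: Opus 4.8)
The plan is to unwind the definitions on both sides and show the two linear conditions cut out the same subspace of $T_q\mathbb{S}^7 \cong \{v \in \mathbb{H}^2 : \langle q, v\rangle = 0\}$. Fix $q \in \mathbb{S}^7 \subset \mathbb{H}^2$. A tangent vector $X(q)$, viewed as an element of $\mathbb{H}^2$, lies in $T_q\mathbb{S}^7$ precisely when $\langle q, X(q)\rangle = 0$, i.e. the real part $\operatorname{Re}\langle q, X(q)\rangle_{\mathbb{H}}$ vanishes. By definition $X$ is a quaternionic vector field iff the full quaternionic inner product $\langle q, X(q)\rangle_{\mathbb{H}}$ vanishes, which is three additional real conditions beyond membership in $T_q\mathbb{S}^7$. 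So I must show: $X(q)$ is horizontal for $\mathcal{H}_Q$ at $q$ $\iff$ the three imaginary components of $\langle q, X(q)\rangle_{\mathbb{H}}$ vanish.

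First I would identify the vertical directions explicitly. The vector fields $V_{\bf i}, V_{\bf j}, V_{\bf k}$ generating the vertical distribution are, by construction, the infinitesimal generators of the right action $q \mapsto q\cdot e^{t{\bf l}}$; concretely $V_{\bf l}(q) = q{\bf l}$ when $q{\bf l}$ is read as an element of $\mathbb{H}^2$ (componentwise right multiplication by the imaginary unit ${\bf l}$). A direct inspection of the displayed formulas for $V_{\bf i}, V_{\bf j}, V_{\bf k}$ confirms this: e.g. writing $q_0 = x_0 + y_0{\bf i} + z_0{\bf j} + \omega_0{\bf k}$, one checks $q_0{\bf i} = -y_0 + x_0{\bf i} + \omega_0{\bf j} - z_0{\bf k}$, matching the first four coordinates of $V_{\bf i}(q)$, and similarly for the other components and units. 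So the vertical space at $q$ is $\operatorname{span}_{\mathbb{R}}\{q{\bf i}, q{\bf j}, q{\bf k}\} = q\cdot\operatorname{Im}\mathbb{H}$, and since $\mathcal{H}_Q$ is the orthogonal complement of the vertical space inside $T_q\mathbb{S}^7$ with respect to the real inner product $\langle\cdot,\cdot\rangle = \operatorname{Re}\langle\cdot,\cdot\rangle_{\mathbb{H}}$, horizontality of $X(q)$ means $\operatorname{Re}\langle q{\bf l}, X(q)\rangle_{\mathbb{H}} = 0$ for ${\bf l} \in \{{\bf i}, {\bf j}, {\bf k}\}$, in addition to $\operatorname{Re}\langle q, X(q)\rangle_{\mathbb{H}} = 0$ from tangency.

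Then I would close the argument with a short quaternionic-algebra computation. Using $\overline{q{\bf l}} = \overline{{\bf l}}\,\overline{q} = -{\bf l}\,\overline{q}$ and the definition $\langle p, w\rangle_{\mathbb{H}} = \sum \overline{p_\ell} w_\ell$, one gets $\langle q{\bf l}, X(q)\rangle_{\mathbb{H}} = -{\bf l}\langle q, X(q)\rangle_{\mathbb{H}}$ — here I need to be a little careful, since scalars on the left of the Hermitian form come out conjugated, so this identity should be checked componentwise, but it reduces to the elementary fact $\overline{(p_\ell {\bf l})} = -{\bf l}\,\overline{p_\ell}$. Writing $\langle q, X(q)\rangle_{\mathbb{H}} = a + b{\bf i} + c{\bf j} + d{\bf k}$ with $a = \operatorname{Re}\langle q, X(q)\rangle_{\mathbb{H}}$, tangency forces $a = 0$, and the three horizontality conditions $\operatorname{Re}(-{\bf l}(b{\bf i}+c{\bf j}+d{\bf k})) = 0$ become exactly $b = c = d = 0$ (for instance $\operatorname{Re}(-{\bf i}(b{\bf i}+c{\bf j}+d{\bf k})) = b$). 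Hence $X(q)$ is horizontal iff $\langle q, X(q)\rangle_{\mathbb{H}} = 0$, i.e. iff $X$ is a quaternionic vector field. Doing this at every $q$ proves the lemma.

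The only real obstacle is bookkeeping: one must get the conventions straight for which slot of the Hermitian form $\langle\cdot,\cdot\rangle_{\mathbb{H}}$ carries the conjugation and therefore whether right-multiplying $q$ by ${\bf l}$ pulls out $-{\bf l}$ on the left or the right of $\langle q, X(q)\rangle_{\mathbb{H}}$; both choices lead to the same conclusion since $\{b=c=d=0\}$ is stable under left or right multiplication by units, but the intermediate formulas differ. I would verify the correspondence $V_{\bf l}(q) = q{\bf l}$ against one of the displayed coordinate expressions to pin down the convention, and then the rest is the three-line real-part computation above.
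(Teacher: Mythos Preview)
Your proof is correct and follows essentially the same approach as the paper: both identify $V_{\bf l}(q)=q{\bf l}$ and reduce horizontality to the vanishing of the real inner products $\langle q,X(q)\rangle$ and $\langle q{\bf l},X(q)\rangle$ for ${\bf l}\in\{{\bf i},{\bf j},{\bf k}\}$, then observe these are precisely the four real components of $\langle q,X(q)\rangle_{\mathbb{H}}$. The paper packages this last step as the single identity $\langle p,q\rangle_{\mathbb{H}}=\langle p,q\rangle+{\bf i}\langle p{\bf i},q\rangle+{\bf j}\langle p{\bf j},q\rangle+{\bf k}\langle p{\bf k},q\rangle$, whereas you obtain it via the sesquilinearity relation $\langle q{\bf l},X(q)\rangle_{\mathbb{H}}=-{\bf l}\,\langle q,X(q)\rangle_{\mathbb{H}}$ and then take real parts; these are equivalent formulations of the same computation.
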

\begin{proof}
By definition, a vector field $X$ on $\mathbb{S}^{7}$ is horizontal if and only if for all $q \in \mathbb{S}^7$: 
$$\langle q,X(q)\rangle=\langle V_{\bf i}(q),X(q)\rangle=\langle V_{\bf j}(q),X(q)\rangle=\langle V_{\bf k}(q),X(q)\rangle=0.$$
Note that the components of the vector fields $V_{\bf i},V_{\bf j}$ and $V_{\bf k}$ at a point $q$  coincide with the components of 
$q {\bf i}, q {\bf j}$ and $q{\bf k}$. A straightforward calculation shows that for $p,q\in\mathbb{H}^{2}$:
$$\langle p,q\rangle_\mathbb{H}=\langle p,q\rangle+{\bf i}\langle p {\bf i},q\rangle+{\bf j}\langle p {\bf j},q\rangle+{\bf k}\langle p {\bf k},q\rangle.$$
This implies that $X$ is horizontal if and only if $\langle q,X(q)\rangle_\mathbb{H}=0$ for all $q\in\mathbb{S}^{7}$, i.e. $X$ is horizontal if and only if $X$ is a quaternionic vector field.
\end{proof}
Now we recall the following quaternionic version of Adam's theorem  in \cite{A} on the maximal dimension of a trivial subbundle of the tangent bundle of a sphere. 
Theorem \ref{Theorem_extension_Adams} below was proven in \cite{Mah} from methods in topological $K$-theory. 
\begin{thm}[\cite{Mah}] \label{Theorem_extension_Adams}
For $n\geq 1$, the sphere $\mathbb{S}^{4n+3}$ admits a nowhere vanishing and globally defined quaternionic vector field if and only if $n\equiv -1$ \textup{mod} $24.$
\end{thm}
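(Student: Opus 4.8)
We outline the strategy, which follows the $K$-theoretic method of \cite{Mah} (the quaternionic counterpart of Adams' solution of the vector fields problem \cite{A}).

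\emph{Step 1: reduction to a section problem.} Exactly as in Lemma~\ref{quat}, a globally defined quaternionic vector field on $\mathbb{S}^{4n+3}\subset\mathbb{H}^{n+1}$ is the same thing as a smooth section of the rank-$4n$ real bundle $\mathcal{H}_Q\to\mathbb{S}^{4n+3}$, the horizontal distribution of the quaternionic Hopf fibration $\mathbb{S}^{3}\hookrightarrow\mathbb{S}^{4n+3}\to\mathbb{HP}^{n}$; equivalently $\mathcal{H}_Q\cong\pi^{\ast}T\mathbb{HP}^{n}$, where $\pi$ is the projection. Since the vertical bundle of this principal $\mathrm{Sp}(1)$-bundle is trivial of rank $3$ and $T\mathbb{S}^{4n+3}$ is stably trivial, one gets $\mathcal{H}_Q\oplus\mathbb{R}^{4}\cong\mathbb{R}^{4n+4}$, so $\mathcal{H}_Q$ is a stably trivial rank-$4n$ bundle on the sphere. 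A nowhere vanishing quaternionic vector field exists if and only if $\mathcal{H}_Q$ splits off a trivial line subbundle.

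\emph{Step 2: obstruction theory.} The sphere $\mathbb{S}^{4n+3}$ has a single $0$-cell and a single $(4n+3)$-cell, and the fibre of $S(\mathcal{H}_Q)\to\mathbb{S}^{4n+3}$ is $S^{4n-1}$. A section exists over the $(4n+2)$-skeleton trivially and extends over the top cell if and only if a single primary obstruction
$$o(\mathcal{H}_Q)\in H^{4n+3}\bigl(\mathbb{S}^{4n+3};\pi_{4n+2}(S^{4n-1})\bigr)\cong\pi_{4n+2}(S^{4n-1})$$
vanishes. For $n\ge 2$ this group is the stable $3$-stem $\pi_{3}^{s}\cong\mathbb{Z}/24$, generated by the iterated suspension of the quaternionic Hopf class; for $n=1$ it is the unstable group $\pi_{6}(S^{3})\cong\mathbb{Z}/12$ and needs separate care.

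\emph{Step 3: identification of the obstruction via $K$-theory.} This is the crux. A James-type (S-duality) argument identifies $o(\mathcal{H}_Q)$ with the stable attaching map of the top cell of a stunted quaternionic projective space $\mathbb{HP}^{n+2}/\mathbb{HP}^{n}=S^{4n+4}\cup_{f}e^{4n+8}$, whose coreducibility is equivalent to the existence of a section. One then evaluates $f\in\pi_{3}^{s}\cong\mathbb{Z}/24$ by computing in $KO$-theory: the Adams operations $\psi^{k}$ acting on $\widetilde{KO}^{\ast}(\mathbb{HP}^{n+2}/\mathbb{HP}^{n})$, i.e.\ Adams' real $e$-invariant, together with the mod-$2$ and mod-$3$ Steenrod operations $Sq^{4}$ and $P^{1}$ on $H^{\ast}(\mathbb{HP}^{\infty})$. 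Using $\psi^{k}\bigl([H]-1\bigr)\equiv k^{2}\bigl([H]-1\bigr)$ modulo higher filtration, these invariants show $f=\pm(n+1)\nu$ with $\nu$ a generator of $\mathbb{Z}/24$; the factor $n+1$ reflects the characteristic-class data of $T\mathbb{HP}^{n}$ (concretely $\mathcal{H}_Q\oplus\mathbb{R}^{4}\cong(n+1)\,\pi^{\ast}H$). Since the real $e$-invariant is injective on $\pi_{3}^{s}$, this pins $o(\mathcal{H}_Q)$ down completely: $o(\mathcal{H}_Q)=0$ if and only if $24\mid n+1$, i.e.\ $n\equiv-1\pmod{24}$. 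The case $n=1$ is treated directly: for instance $P^{1}\colon H^{8}(\mathbb{HP}^{3};\mathbb{Z}/3)\to H^{12}(\mathbb{HP}^{3};\mathbb{Z}/3)$ is nonzero, so $\mathbb{HP}^{3}/\mathbb{HP}^{1}$ is not coreducible and $\mathcal{H}_Q$ over $\mathbb{S}^{7}$ has no nowhere-zero section, in accordance with $24\nmid2$.

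\emph{Main obstacle.} The delicate point is the $KO$-theoretic computation identifying $o(\mathcal{H}_Q)$ with $(n+1)\nu$ in $\mathbb{Z}/24$: one must show both that it is nonzero precisely when $24\nmid n+1$ (the nonexistence direction, detected by $\psi^{k}$, respectively by $Sq^{4}$ and $P^{1}$ for the $2$- and $3$-primary parts) and that it genuinely vanishes when $24\mid n+1$ (the existence direction), the latter resting on Adams' theorem that the real $e$-invariant is injective on $\pi_{3}^{s}$. The unstable bookkeeping for $n=1$, where one works in $\mathbb{Z}/12$ rather than $\mathbb{Z}/24$, is a secondary but genuine technical complication.
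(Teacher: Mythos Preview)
The paper does not give its own proof of this theorem: it is stated as a citation from \cite{Mah} (``Theorem~\ref{Theorem_extension_Adams} below was proven in \cite{Mah} from methods in topological $K$-theory''), and is then used as a black box to deduce Corollary~\ref{loc}. There is therefore no proof in the paper to compare your proposal against.

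Your outline is a reasonable and broadly accurate sketch of the argument in \cite{Mah}: the reduction to a section problem for $\mathcal{H}_Q\cong\pi^{\ast}T\mathbb{HP}^n$, the single primary obstruction in $\pi_{4n+2}(S^{4n-1})\cong\pi_3^s\cong\mathbb{Z}/24$ (for $n\ge 2$), and its identification via the $e$-invariant and cohomology operations on stunted quaternionic projective spaces. One small remark: your parenthetical ``$\mathcal{H}_Q\oplus\mathbb{R}^4\cong(n+1)\,\pi^{\ast}H$'' is correct but not very informative on $\mathbb{S}^{4n+3}$, since $\pi^{\ast}H$ is already trivial there (a principal bundle pulled back along its own projection is trivial), so this just restates $\mathcal{H}_Q\oplus\mathbb{R}^4\cong\mathbb{R}^{4n+4}$. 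The factor $n+1$ genuinely enters downstairs, in the analysis of $T\mathbb{HP}^n$ and the attaching map of the stunted projective space, which is where the $K$-theoretic computation in \cite{Mah} actually takes place.
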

By combining this result with Lemma \ref{quat} we obtain:
\begin{cor}\label{loc}
The quaternionic Hopf distribution $\mathcal{H}_Q$ on $\mathbb{S}^7$  does not  admit a nowhere vanishing and globally defined vector field (section of the bundle). In particular, the distribution $\mathcal{H}_Q$ is not trivial.
\end{cor}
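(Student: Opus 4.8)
The plan is to obtain Corollary \ref{loc} as a direct consequence of the topological input of Theorem \ref{Theorem_extension_Adams} together with the identification of horizontal sections provided by Lemma \ref{quat}. First I would specialize Theorem \ref{Theorem_extension_Adams} to the case at hand: writing $\mathbb{S}^7=\mathbb{S}^{4n+3}$ forces $n=1$, and since $1\not\equiv-1\pmod{24}$ (indeed $24\nmid 2$), the theorem guarantees that $\mathbb{S}^7$ carries \emph{no} globally defined, nowhere vanishing quaternionic vector field.

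Next I would translate this statement back to the distribution $\mathcal{H}_Q$. By Lemma \ref{quat}, a global smooth section of $\mathcal{H}_Q$ is precisely a global quaternionic vector field on $\mathbb{S}^7$; hence a nowhere vanishing global section of $\mathcal{H}_Q$ would be a nowhere vanishing global quaternionic vector field, contradicting the previous step. This establishes the first assertion of the corollary.

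For the triviality claim I would argue by contradiction: if $\mathcal{H}_Q$ were trivial as a rank-$4$ vector bundle, say $\mathcal{H}_Q\cong\mathbb{S}^7\times\mathbb{R}^4$, then the constant section assigning to each $q$ a fixed nonzero vector in the fiber would be a nowhere vanishing global section, contradicting the part just proved. Therefore $\mathcal{H}_Q$ is non-trivial.

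I do not expect a genuine obstacle here: the substantive work has already been carried out in Lemma \ref{quat} and in the $K$-theoretic result of \cite{Mah}, so the corollary is essentially a one-line deduction. The only point requiring (trivial) care is verifying that the congruence condition in Theorem \ref{Theorem_extension_Adams} fails for $n=1$, and I would present the argument accordingly.
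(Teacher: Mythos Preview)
Your proposal is correct and matches the paper's approach exactly: the paper simply states that the corollary follows by combining Theorem~\ref{Theorem_extension_Adams} (specialized to $n=1$) with Lemma~\ref{quat}, which is precisely the deduction you spell out. Your added remark that triviality of $\mathcal{H}_Q$ would yield a nowhere vanishing section is the obvious (and intended) justification of the ``in particular'' clause.
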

\section{Trivializable subriemannian structure}\label{trist}
In the following we recall the definition of a second remarkable subriemannian structure on $\mathbb{S}^7$, called trivializable subriemannian structure 
\cite{BFI,BT}. According to \cite[Theorem 4.4]{BFI} such structures only exist on the spheres $\mathbb{S}^3,\mathbb{S}^7$ and $\mathbb{S}^{15}$.\ \\

 By $\mathbb{K}(n)$ with $\mathbb{K} \in \{ \mathbb{R}, \mathbb{C}, \mathbb{H}\}$ we denote the space of all $n\times n$-matrices with entries in $\mathbb{K}$. 
Let $A_1,\cdots,A_m\in\mathbb{R}(8)$ be a family of skew-symmetric real matrices that fulfill the  anti-commutation relation:
\begin{equation}\label{antic}
A_iA_j+A_jA_i=-2\delta_{ij}\hspace{2ex} \text{ \it for }\hspace{2ex} i,j=1,\dots,m.
\end{equation}

Then a collection of $m$ linear vector fields $X(A_1),\dots,X(A_m)$ on $\mathbb{S}^7$ orthonormal at each point  ({\it canonical vector fields}) 
can be defined in global coordinates of $\mathbb{R}^8$ by: 
$$X(A_k):=\sum_{i,j=1}^{8}(A_k)_{ij}x_j\frac{\partial}{\partial x_i}\hspace{2ex} \text{ \it for }\hspace{2ex} k=1,\dots,m.$$
 Due  %
to the representation theory for Clifford algebras, the  maximal number $m$ of matrices in $\mathbb{R}(8)$ such that the relations (\ref{antic}) hold is $m=7$. 
We recall the following properties  of the above linear vector fields on spheres. 
\begin{lem}[\cite{BFI}]{\label{bra}} 
 Let $A_1, \ldots, A_7 \in \mathbb{R}(8)$ be a collection of matrices with (\ref{antic}). 
For $i=1,\cdots,7$ we set
$$X_j:=X(A_j).$$ 
Then it holds:
\begin{enumerate}
\item For $i,j=1,\cdots,7$  with $i\ne j$: 
$$[X_i,X_j]=-X([A_i,A_j])=-2X(A_iA_j).$$
\item All higher Lie brackets $[X_{i_1}[X_{i_2},[X_{i_3},\dots]]]$ are contained in
$$\textup{Span}\Big{\{}X_i,[X_j,X_k]:i,j,k=1,\cdots,7\Big{\}}.$$
\item Let $i_1,i_2,i_3,i_4\in\{1,\cdots,7\}$. The rank-4 distribution $\mathcal{H}$ on $\mathbb{S}^7$ generated by the vector fields 
$\{X_{i_1},X_{i_2},X_{i_3},X_{i_4}\}$ is bracket generating of step two. 
\end{enumerate}
\end{lem}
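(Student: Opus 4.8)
The plan is to prove the three parts in turn: (1) and (2) by direct computation with the matrices $A_\ell$, and (3), which is the substantive one, by a pointwise spanning argument along $\mathbb{S}^7$. For (1) I would first record the bracket law for linear vector fields: with $X(A)=\sum_{i,j}A_{ij}x_j\,\partial_{x_i}$, a direct computation on the coordinate functions gives $[X(A),X(B)]=X(BA-AB)=-X([A,B])$. Taking $A=A_i$, $B=A_j$ with $i\neq j$ and using $A_iA_j=-A_jA_i$ gives $[A_i,A_j]=2A_iA_j$, hence $[X_i,X_j]=-X([A_i,A_j])=-2X(A_iA_j)$; moreover $(A_iA_j)^{\mathsf T}=A_jA_i=-A_iA_j$, so $A_iA_j$ is skew and $X(A_iA_j)$ is again a linear vector field of the same type. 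For (2) the Clifford relations make iterated brackets collapse: by (1) and the bracket law again, $[X_i,[X_j,X_k]]=2X(A_iA_jA_k-A_jA_kA_i)$, and a short computation with $A_\ell^{2}=-I$ and $A_\ell A_m=-A_mA_\ell$ ($\ell\neq m$) shows that $A_iA_jA_k-A_jA_kA_i$ vanishes when $i,j,k$ are pairwise distinct and equals $\pm 2A_\ell$ (with $\ell$ the non-repeated index) when two of the indices coincide; so in all cases $[X_i,[X_j,X_k]]\in\mathrm{Span}\{X_\ell\}$. An induction on the nesting depth, the inductive step being linear in the innermost bracket, then shows that a right-nested bracket of depth $\ge 2$ lies in $\mathrm{Span}\{[X_j,X_k]\}$ for even depth and in $\mathrm{Span}\{X_\ell\}$ for odd depth, hence always in $\mathrm{Span}\{X_i,[X_j,X_k]:i,j,k=1,\dots,7\}$. (Alternatively, $\{A_i\}\cup\{A_iA_j\}$ spans $\mathfrak{so}(8)$, making this span a Lie subalgebra of vector fields containing each $X_i$.)

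For (3) I may assume $\{i_1,\dots,i_4\}=\{1,2,3,4\}$ after permuting the $A_\ell$. The basic fact is that for every $q\in\mathbb{S}^7\subset\mathbb{R}^8$ the vectors $A_1q,\dots,A_7q$ form an orthonormal basis of $T_q\mathbb{S}^7=q^{\perp}$ (indeed $\langle A_\ell q,q\rangle=0$, $\langle A_\ell q,A_mq\rangle=-\langle A_\ell A_mq,q\rangle=0$ for $\ell\neq m$, and $|A_\ell q|^{2}=-\langle A_\ell^{2}q,q\rangle=1$); thus $\mathcal{H}$ has rank $4$, and since $4<7$ it remains only to show $\mathcal{H}^{2}_q=Vq=q^{\perp}$ for all unit $q$, where $V:=\mathrm{Span}\{A_1,\dots,A_4,\ A_iA_j:1\le i<j\le 4\}$; this also yields step exactly two. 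I would organize this around the symmetric involution $c:=A_1A_2A_3A_4$: the Clifford relations give $c^{\mathsf T}=c$, $c^{2}=I$, $cA_i=-A_ic$ for $i\le 4$, and $cA_k=A_kc$ for $k=5,6,7$. Writing $\mathbb{R}^8=E_+\oplus E_-$ for the $\pm1$-eigenspaces of $c$, injectivity of $A_1$ forces $\dim E_\pm=4$; the $A_i$ ($i\le 4$) interchange $E_+$ and $E_-$, while the $A_iA_j$ ($i<j\le 4$) preserve each $E_\pm$. Using the identities $A_1A_2=-cA_3A_4$, $A_1A_3=cA_2A_4$, $A_1A_4=-cA_2A_3$, together with $cq=q_+-q_-$ for $q=q_++q_-$, one finds $U:=\mathrm{Span}\{A_iA_jq:i<j\le 4\}=U_+\oplus U_-$ where $U_\pm=\mathrm{Span}\{A_iA_jq_\pm:ij\in\{23,24,34\}\}\subset E_\pm$; and since $A_2A_3q_\pm,A_2A_4q_\pm,A_3A_4q_\pm$ are mutually orthogonal of norm $|q_\pm|$ and lie in $q_\pm^{\perp}$, one gets $U_\pm=q_\pm^{\perp}\cap E_\pm$ (dimension $3$) when $q_\pm\neq0$, and $U_\pm=0$ otherwise.

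Set $Z:=\mathrm{Span}\{A_iq:i\le 4\}$, so $Vq=U+Z$. If $q_+,q_-\neq 0$, then $U\subset q^{\perp}$ has dimension $6$ and $Vq=q^{\perp}$ unless $Z\subset U$; but $Z\subset U$ would force $\langle A_iq_+,q_-\rangle=0$ for $i=1,\dots,4$, hence $q_-=0$ since $\{A_1q_+,\dots,A_4q_+\}$ is an orthogonal basis of $E_-$, a contradiction. If $q_+=0$ (resp. $q_-=0$), then $Z=E_+$ (resp. $E_-$) is $4$-dimensional, $U=q^{\perp}\cap E_-$ (resp. $q^{\perp}\cap E_+$) is $3$-dimensional, and $Z+U=q^{\perp}$ again. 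Hence $\mathcal{H}^{2}_q=T_q\mathbb{S}^7$ for every $q$, so $\mathcal{H}$ is bracket generating of step two.

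The step I expect to be the main obstacle is this last pointwise verification in (3): the ten vector fields $X_{i_1},\dots,X_{i_4}$ and $[X_{i_a},X_{i_b}]$ generating $\mathcal{H}^{2}$ could a priori span fewer than $7$ dimensions at some $q\in\mathbb{S}^7$, and excluding this is not a matter of counting but rests on genuine low-degree Clifford identities such as $A_1A_2=-cA_3A_4$. Diagonalizing $\mathbb{R}^8$ by the involution $c$, which splits it into two rank-$4$ Clifford modules, is the device that reduces the verification to the short case analysis above; with that in hand, parts (1) and (2) are routine.
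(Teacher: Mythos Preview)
The paper does not supply its own proof of this lemma; it is quoted from \cite{BFI} without argument. Your proof is correct and self-contained.

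Parts (1) and (2) are routine and your computations check out: the bracket law $[X(A),X(B)]=-X([A,B])$ together with the Clifford relations collapses all iterated brackets as you describe. For part (3), your device of diagonalizing $\mathbb{R}^8$ by the symmetric involution $c=A_1A_2A_3A_4$ is an effective organizing principle: since the $A_i$ ($i\le4$) interchange $E_\pm$ while the $A_iA_j$ preserve them, the spanning question decouples and reduces to the short case analysis you give. The key non-degeneracy step---that $Z\not\subset U$ when both $q_\pm\neq0$---is handled cleanly by noting that $\{A_1q_+,\dots,A_4q_+\}$ is an orthogonal basis of the four-dimensional space $E_-$, so $\langle A_iq_+,q_-\rangle=0$ for all $i$ forces $q_-=0$. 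Your argument for (3) uses only $A_1,\dots,A_4$; the relation $cA_k=A_kc$ for $k\ge5$ that you record is true but not needed, which is as it should be since the claim concerns an arbitrary choice of four of the seven generators.
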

\begin{rem}\label{Remark_different_choices_of_generators}
{Let $\{A_1^{(1)},\dots,A_4^{(1)}\}$ and $\{A_1^{(2)},\dots,A_4^{(2)}\}$ be two families of skew-symmetric and anti-commuting matrices in $\mathbb{R}(8)$. Then it was shown in \cite{BFI} that there is $C\in{\bf O}(8)$ such that
$$A_i^{(1)}=C^{-1}A_i^{(2)}C\hspace{3ex} \text{\it  for }\hspace{3ex} i=1,\dots,4.$$
Therefore, if we define the following bracket generating distributions:
$$\mathcal{H}^{(k)}:=\text{Span}\{X(A_i^{(k)}):i=1,\dots,4\}\hspace{2ex} \text{\it  for }\hspace{2ex} k=1,2$$
then the subriemannian structures $(\mathbb{S}^7,\mathcal{H}^{(k)},\langle \cdot,\cdot\rangle)$ for $k=1,2$ are isometric, i.e. the above defined trivializable 
subriemannian structure on $\mathbb{S}^7$ is,  up to subriemannian isometries, independent of the choice of linear vector fields induced by the Clifford module structure of $\mathbb{R}^8$ and spanning the distribution.}
\end{rem}

In the following we give an explicit family of skew-symmetric and anti-commuting matrices which will serve as a model for the study of a trivializable subriemannian 
structure on $\mathbb{S}^7$ induced by matrices which fulfill the relations (\ref{antic}). 
Consider $A_4,A_5,A_6,A_7 \in \mathbb{H}(2)$ defined by: 
\begin{align}\label{matrices_A_j}
A_4:&=
\left(
\begin{array}{cc}
0 & 1\\
-1 & 0 
\end{array}
\right), \hspace{1ex} 
A_5:= 
\left(
\begin{array}{cc}
{\bf i} & 0 \\
0 & -{\bf i} 
\end{array}
\right),\hspace{1ex}
A_6:=
\left(
\begin{array}{cc}
{\bf j} & 0 \\
0 & -{\bf j}
\end{array}
\right) \\
A_7:&=
\left(
\begin{array}{cc}
{\bf k} & 0\\
0 & -{\bf k}
\end{array}
\right).  \notag
\end{align}
One easily verifies that $\{A_4,A_5,A_6,A_7\} \subset \mathbb{H}(2)$ are anti-commuting and skew-symmetric with respect to the standard inner product on $\mathbb{H}^2$.
\vspace{1mm}\\
Via the standard basis of $\mathbb{R}^8$ we may regard $A_j$ as skew-symmetric element in $\mathbb{R}(8)$. 
\begin{lem}\label{Lemma_complement_to_anti-commuting_matrices}
There are  three skew-symmetric matrices $A_1,A_2,A_3 \in \mathbb{R}(8)$ such that $\{A_j \: : \: j=1, \ldots, 7\}\subset \mathbb{R}(8)$ are anti-commuting and skew-symmetric. 
\end{lem}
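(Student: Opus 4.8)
The plan is to construct the three missing matrices $A_1, A_2, A_3 \in \mathbb{R}(8)$ explicitly by exploiting the Clifford-algebra structure already encoded in $A_4, A_5, A_6, A_7$. Since the maximal size of an anti-commuting skew-symmetric family in $\mathbb{R}(8)$ is $7$ (as recalled in the text, from the representation theory of Clifford algebras), such a completion must exist in principle; the content of the lemma is to exhibit it. First I would recall that a family of $n$ real skew-symmetric matrices on $\mathbb{R}^8$ satisfying $A_iA_j + A_jA_i = -2\delta_{ij}$ is the same thing as a representation of the Clifford algebra $\mathrm{Cl}_n$ on $\mathbb{R}^8$; the given four matrices $A_4,\dots,A_7$ already furnish a $\mathrm{Cl}_4$-module structure, and $\mathrm{Cl}_7$ has (up to isomorphism) an $8$-dimensional real irreducible module, so the obstruction is only one of writing the remaining generators down.

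The concrete route I would take is to note that $\mathbb{R}^8 \cong \mathbb{H}^2$ and that $A_5, A_6, A_7$ act by left multiplication by $\mathbf{i}, \mathbf{j}, \mathbf{k}$ on each $\mathbb{H}$-summand with a sign twist on the second summand, while $A_4$ is the off-diagonal swap. One then looks for further block matrices in $\mathbb{H}(2)$ of the form $\left(\begin{smallmatrix} 0 & a \\ b & 0 \end{smallmatrix}\right)$ with $a, b$ unit quaternions, or diagonal ones, that anti-commute with all four. A short computation shows that $A_4' := \left(\begin{smallmatrix} 0 & \mathbf{i} \\ \mathbf{i} & 0 \end{smallmatrix}\right)$, $A_4'' := \left(\begin{smallmatrix} 0 & \mathbf{j} \\ \mathbf{j} & 0 \end{smallmatrix}\right)$, $A_4''' := \left(\begin{smallmatrix} 0 & \mathbf{k} \\ \mathbf{k} & 0 \end{smallmatrix}\right)$ each anti-commute with $A_4$ (since $\mathbf{l}$ is purely imaginary, the product $\left(\begin{smallmatrix} 0 & 1 \\ -1 & 0 \end{smallmatrix}\right)\left(\begin{smallmatrix} 0 & \mathbf{l} \\ \mathbf{l} & 0 \end{smallmatrix}\right)$ picks up the right sign), and anti-commute with the diagonal $A_5, A_6, A_7$ precisely because left multiplication by $\mathbf{l}$ against the sign-twisted diagonal blocks reverses order. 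I would also check these three candidates are mutually anti-commuting: $A_4' A_4'' = \left(\begin{smallmatrix} \mathbf{i}\mathbf{j} & 0 \\ 0 & \mathbf{i}\mathbf{j} \end{smallmatrix}\right)$ up to sign, and $\mathbf{i}\mathbf{j} = -\mathbf{j}\mathbf{i}$ gives anti-commutativity; similarly for the other pairs. Finally each is skew-symmetric over $\mathbb{R}$: left multiplication by a purely imaginary unit quaternion is skew-adjoint for the real inner product on $\mathbb{H}$, and the swap of identical blocks is symmetric, but the composite block $\left(\begin{smallmatrix} 0 & \mathbf{l} \\ \mathbf{l} & 0 \end{smallmatrix}\right)$ is skew because $x\mapsto \mathbf{l}x$ contributes the sign. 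So I would set $A_1 := A_4'$, $A_2 := A_4''$, $A_3 := A_4'''$.

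The only genuine verification is therefore the bookkeeping that all $\binom{7}{2} = 21$ anti-commutation relations hold; none of it is deep, but it must be done carefully with the quaternion multiplication table. The anti-commutations among $\{A_4,A_5,A_6,A_7\}$ are given; those among $\{A_1,A_2,A_3\}$ reduce to $\mathbf{i}\mathbf{j} = -\mathbf{j}\mathbf{i}$, etc.; and the nine mixed relations $A_p A_q + A_q A_p = 0$ with $p\in\{1,2,3\}$, $q\in\{4,5,6,7\}$ split into the case $q=4$ (handled because $\mathbf{l}$ is imaginary, as above) and $q\in\{5,6,7\}$ (handled because the diagonal block on the second $\mathbb{H}$-factor carries the opposite sign, so an off-diagonal matrix anti-commutes with a diagonal one of this twisted type). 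Normalization to $A_p^2 = -I$ is automatic since each $A_p$ squares to $\left(\begin{smallmatrix} \mathbf{l}^2 & 0 \\ 0 & \mathbf{l}^2 \end{smallmatrix}\right) = -I$. I expect the main (minor) obstacle to be simply presenting this table compactly; I would phrase it by organizing the generators into the off-diagonal family $\{A_1,A_2,A_3,A_4\}$ and the diagonal family $\{A_5,A_6,A_7\}$ and checking relations block-by-block, which makes all 21 relations transparent at once.
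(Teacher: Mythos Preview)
Your construction does not work as stated: the three matrices $A_1,A_2,A_3$ you propose fail to anti-commute with $A_5,A_6,A_7$. The block computation for an off-diagonal matrix against a sign-twisted diagonal one gives
\[
\begin{pmatrix} 0 & B \\ B & 0 \end{pmatrix}
\begin{pmatrix} D & 0 \\ 0 & -D \end{pmatrix}
=\begin{pmatrix} 0 & -BD \\ BD & 0 \end{pmatrix},
\qquad
\begin{pmatrix} D & 0 \\ 0 & -D \end{pmatrix}
\begin{pmatrix} 0 & B \\ B & 0 \end{pmatrix}
=\begin{pmatrix} 0 & DB \\ -DB & 0 \end{pmatrix},
\]
so anti-commutation of the $8\times 8$ matrices is equivalent to $BD=DB$, i.e.\ the $4\times 4$ blocks must \emph{commute}. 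In your proposal both $B$ and $D$ are left multiplication by imaginary units, and distinct imaginary units anti-commute under left multiplication. Concretely, with your $A_1=\left(\begin{smallmatrix} 0 & {\bf i} \\ {\bf i} & 0 \end{smallmatrix}\right)$ and $A_6=\left(\begin{smallmatrix} {\bf j} & 0 \\ 0 & -{\bf j} \end{smallmatrix}\right)$ one finds $A_1A_6=A_6A_1=\left(\begin{smallmatrix} 0 & -{\bf k} \\ {\bf k} & 0 \end{smallmatrix}\right)$, so they commute rather than anti-commute. (Your check of $A_1$ against $A_5$ happened to succeed only because ${\bf i}$ commutes with itself.) Thus of the nine mixed relations $A_pA_q+A_qA_p=0$ with $p\in\{1,2,3\}$, $q\in\{5,6,7\}$, only three hold.

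The fix, and this is exactly what the paper does, is to take the off-diagonal blocks $B_1,B_2,B_3$ to be \emph{right} quaternionic multiplications rather than left ones. Right multiplication by any quaternion commutes with left multiplication by any quaternion (associativity of $\mathbb{H}$), so $B_iD=DB_i$ for every diagonal block $D\in\{{\bf i},{\bf j},{\bf k}\}$ and the anti-commutation with $A_5,A_6,A_7$ follows. The remaining relations (skew-symmetry, $B_i^2=-I$, mutual anti-commutation of the $B_i$, and anti-commutation with $A_4$) go through exactly as in your outline. So your overall block strategy is correct; the single missing idea is the left/right distinction, without which the verification you describe would fail at the step you glossed over.
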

\begin{proof}
Consider the following skew-symmetric real matrices: 
\begin{align*}
B_1:=
\left(
\begin{array}{cccc}
0 & 0 & 0 & -1 \\
0 & 0 & 1 & 0 \\
0 & -1 & 0 & 0 \\
1 & 0 & 0 & 0
\end{array}
\right), \hspace{1ex} 
B_2:=
\left(
\begin{array}{cccc}
0 & 0 & -1 & 0 \\
0 & 0 & 0 & -1 \\
1 & 0 & 0 & 0 \\
0 & 1 & 0 & 0
\end{array}
\right),\hspace{1ex} 
B_3:=
\left(
\begin{array}{cccc}
0 & 1 & 0 & 0 \\
-1 & 0 & 0 & 0 \\
0 & 0 & 0 & -1 \\
0 & 0 & 1 & 0
\end{array}
\right). 
\end{align*}
Note that $B_1$ (resp. $B_2$ and $B_3$) corresponds to the right quaternionic multiplication by ${\bf k}$ (resp. ${\bf j}$ and ${\bf -i}$). 
Now we define for $i=1,2,3$: 
\begin{align*}
A_i:= 
\left(
\begin{array}{cc}
0 & B_i \\
B_i & 0
\end{array} 
\right). 
\end{align*}
A straightforward calculation shows that the following relations hold:
$$[B_i, {\bf l}]= 0 \hspace{2ex} \text{\it  for }\hspace{2ex} {\bf l} \in \{ {\bf i}, {\bf j}, {\bf k}\}\hspace{2ex} \text{\it  and } \hspace{2ex}i=1,2,3$$
and 
$$B_iB_j+B_jB_i=-2\delta_{ij} \hspace{2ex}  \text{ \it for} \hspace{2ex}  i, j\in\{1,2,3\}.$$
By a direct calculation based on these relations it follows that $A_1, \ldots, A_7$ have the desired properties.
\end{proof} 
We consider the following trivializable distribution on $\mathbb{S}^7$:
$$\mathcal{H}_T:=\text{Span}\{X(A_i):i=1,2,3,4\},$$ 
and we denote by $\mathbb{S}^7_T$ the  trivializable subriemannian manifold $(\mathbb{S}^7,\mathcal{H}_T,\langle \cdot,\cdot\rangle)$ 
 where $\langle \cdot, \cdot \rangle$ denotes the restriction of the standard Riemannian metric on $\mathbb{S}^7$ to the trivial 
bundle $\mathcal{H}_T$.
\begin{rem}
According to Corollary \ref{loc}, the quaternionic Hopf structure $\mathbb{S}^7_Q$ does not admit globally defined and  nowhere vanishing horizontal vector fields and hence it cannot be isometric (as a subriemannian manifold) to the trivializable structure $\mathbb{S}^7_T$.  We will see that both structures not even are locally isometric.
\end{rem}
\section{The Popp measures}\label{pop}
 Recall that the {\it Popp measure} on $\mathbb{S}^7$ is a smooth measure which intrinsically can be assigned to a given regular subriemannian structure (see 
\cite{ABGF,Bariz,BFI-1,Mo}). In the present section we determine the Popp measures $\mathcal{P}_Q$ and $\mathcal{P}_T$ on $\mathbb{S}^7$ corresponding to the quaternionic and the trivializable subriemannian structure, respectively. 
\vspace{1ex}\par
Let $X_1,\cdots,X_4$ be a local  orthonormal frame for the distribution $\mathcal{H}_Q$. Then an adapted frame for $\mathbb{S}^7_Q$ is given by 
$\mathcal{F}=[X_1,\cdots,X_4,V_{\bf i},V_{\bf j},V_{\bf k}]$. According to \cite[Theorem 1]{Bariz} the Popp measure $\mathcal{P}_Q$ for the quaternionic subriemannian structure can be expressed in the form:  
\begin{equation}\label{po1}
\mathcal{P}_Q(z)=\frac{1}{\sqrt{\det{B_Q(z)}}}\eta_1\wedge \cdots\wedge \eta_7, \hspace{5ex} z \in \mathbb{S}^7. 
\end{equation}
\par 
 Here $B_Q(z)$ is  a certain matrix which is obtained from the adapted structure constants of the geometric structure and $\eta_1,\cdots,\eta_7$ denotes 
the dual basis to the frame $\mathcal{F}$ (see \cite{Bariz} for more details). \ \par 
Since the vector fields $X_1,\cdots,X_4,V_{\bf i},V_{\bf j},V_{\bf k}$ are orthonormal with respect to the standard Riemannian metric on $\mathbb{S}^7$, the volume form
$$d\sigma:=\eta_1\wedge \cdots\wedge \eta_7$$
is the standard volume form on $\mathbb{S}^7$.
\begin{lem}\label{Popp_volume_QSS}
The Popp volume $\mathcal{P}_Q$ for the quaternionic structure equals the standard volume form $d\sigma$ up to a constant factor.
\end{lem}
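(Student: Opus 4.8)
The plan is to sidestep any explicit evaluation of the matrix $B_Q(z)$ in \eqref{po1} and argue by homogeneity instead. The Popp measure is constructed purely from the subriemannian data: the fibre metric on $\mathcal{H}_Q$, the induced metric on $\Lambda^2\mathcal{H}_Q$, and the quotient metric on $TM/\mathcal{H}_Q$ transported through the curvature map \eqref{curvature_map}. Consequently $\mathcal{P}_Q$ is canonical, and the differential of any subriemannian isometry $\Phi\in\mathcal{I}(\mathbb{S}^{7}_Q)$ intertwines all of these data, so that $\Phi^{\ast}\mathcal{P}_Q=\mathcal{P}_Q$ (see \cite{Bariz,Mo}).

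Now recall from Section \ref{quaho} that ${\bf Sp}(2)\subset\mathcal{I}(\mathbb{S}^{7}_Q)$ and that ${\bf Sp}(2)$ acts transitively on $\mathbb{S}^7$. Moreover ${\bf Sp}(2)\subset{\bf O}(8)$ acts on $\mathbb{S}^7$ by Riemannian isometries of the round metric and, being connected, preserves the standard volume form $d\sigma$. Hence both $\mathcal{P}_Q$ and $d\sigma$ are ${\bf Sp}(2)$-invariant smooth positive densities on $\mathbb{S}^7$, and therefore the smooth positive function
$$
h:=\frac{\mathcal{P}_Q}{d\sigma}\in C^{\infty}(\mathbb{S}^7)
$$
--- which by \eqref{po1} together with $\eta_1\wedge\cdots\wedge\eta_7=d\sigma$ equals $z\mapsto 1/\sqrt{\det B_Q(z)}$ --- satisfies $h\circ\Phi=h$ for every $\Phi\in{\bf Sp}(2)$. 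By transitivity $h$ is constant, which is exactly the assertion.

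The argument has no genuinely hard step; the only point that deserves to be made explicit is the invariance of the Popp measure under subriemannian isometries, which one either quotes from \cite{Bariz,Mo} or verifies directly by checking that each stage of the Popp construction is natural with respect to the differential of an isometry. Everything else is immediate from the transitivity of ${\bf Sp}(2)$ recorded in Section \ref{quaho}.

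As an aside, the same conclusion can be reached by a direct computation from \eqref{po1}: one chooses a local orthonormal frame $X_1,\dots,X_4$ of $\mathcal{H}_Q$, computes the brackets $[X_i,X_j]$ modulo $\mathcal{H}_Q$ in terms of $V_{\bf i},V_{\bf j},V_{\bf k}$, assembles the resulting adapted structure constants into $B_Q(z)$ as in \cite{Bariz}, and checks that $\det B_Q(z)$ does not depend on $z$. The homogeneity argument above merely trades this calculation for the already established transitivity of the isometry group.
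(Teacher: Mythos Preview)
Your proof is correct and follows essentially the same argument as the paper: both write $\mathcal{P}_Q=f\,d\sigma$, invoke the invariance of the Popp measure under subriemannian isometries (the paper cites \cite[Proposition 7]{Bariz}), observe that ${\bf Sp}(2)\subset{\bf O}(8)$ preserves $d\sigma$ as well, and conclude that $f$ is constant by transitivity of the ${\bf Sp}(2)$-action. The only difference is cosmetic---you add the connectedness remark and the aside about direct computation, neither of which changes the argument.
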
 
\begin{proof}
According to (\ref{po1}) we can write $$\mathcal{P}_Q(z)=f(z)d\sigma(z)$$
with  a nowhere vanishing function $f\in C(\mathbb{S}^7)$. We know that the symplectic group ${\bf Sp}(2)$ is a subgroup of the isometry group $\mathcal{I}(\mathbb{S}^7_Q)$. But ${\bf Sp}(2)$ is also a subgroup of ${\bf O}(8)$ which is the isometry group of $\mathbb{S}^7$ with  respect to the standard Riemannian metric. It follows that the Popp volume $\mathcal{P}_Q$  \cite[Proposition 7]{Bariz}  and the standard volume $d\sigma$ are invariant under ${\bf Sp}(2)$, and therefore $f$ must be also invariant under the action of ${\bf Sp}(2)$. Now, the assumption follows from the fact that ${\bf Sp}(2)$ acts transitively on $\mathbb{S}^7$.
\end{proof}
Contrary to the quaternionic Hopf structure, we do not have enough information about the isometry group of the trivializable structure $\mathbb{S}^7_T$ 
 to conclude in a similar way. Therefore, we compute the Popp volume $\mathcal{P}_T$ directly using the  adapted structure constants.\ 
An adapted frame for the trivializable structure is given globally by the orthonormal vector fields $X_1,\cdots,X_7$ defined from the matrices $A_1,\cdots,A_7$ in Lemma \ref{Lemma_complement_to_anti-commuting_matrices}.  According to  \cite[Theorem 1]{Bariz} the Popp measure can be written as
$$\mathcal{P}_T(z)= \frac{1}{\sqrt{\det B_T(z)}}d\sigma(z),$$
where $B_T(z)=(B^{kl}_T(z))_{k,l=5}^7$ is the $3\times 3$ matrix function on $\mathbb{S}^7$ with coefficients
$$B^{kl}_T(z)=\sum_{i,j=1}^{4}b_{ij}^k(z)b_{ij}^l(z), \hspace{4ex} z\in\mathbb{S}^7.$$
For  $i,j=1,\dots,4$ and $k=5,6,7$ the functions $b_{ij}^k(z)$ are defined by:
\begin{equation}\label{Definition_b_i_j_l}
b_{ij}^k(z)=\langle [X_i,X_j](z),X_k(z)\rangle=-2\langle A_iA_j z,A_k z\rangle \hspace{2ex} \text{ \it for }\hspace{2ex} z\in\mathbb{S}^7.
\end{equation}
\par 
In (\ref{Definition_b_i_j_l}) we have used the  notation $\langle \cdot, \cdot \rangle$ for the Euclidean inner product on $\mathbb{R}^8$ and 
its restriction to the sphere, respectively. In the following, we write $\|A\|_{\textup{HS}}$ for the Hilbert-Schmidt norm of $A\in\mathbb{R}(8)$.
\begin{lem}\label{popp}
The Popp measure $\mathcal{P}_T$ with respect to the trivializable subriemannian structure $\mathbb{S}^7_T$ is given by
$$\mathcal{P}_T(z)=g(z)d\sigma,$$
where $$g(z):=\left[16(1-2\|x\|^2\|y\|^2)\right]^{-3/2}\hspace{2ex} \text{ for }\hspace{2ex} z=(x,y)\in\mathbb{S}^7\subset \mathbb{R}^8.$$
\end{lem}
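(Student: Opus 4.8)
The plan is to compute the matrix $B_T(z) = (B_T^{kl}(z))_{k,l=5}^7$ explicitly and show that it is a scalar multiple of the identity, namely $B_T(z) = 16(1 - 2\|x\|^2\|y\|^2)\,\mathrm{Id}_3$; then $\sqrt{\det B_T(z)} = [16(1-2\|x\|^2\|y\|^2)]^{3/2}$ and the claimed formula for $\mathcal{P}_T$ follows immediately from the formula in the text preceding the lemma. So the entire task reduces to evaluating the nine inner products
\[
B_T^{kl}(z) = \sum_{i,j=1}^4 b_{ij}^k(z)\,b_{ij}^l(z), \qquad b_{ij}^k(z) = -2\langle A_iA_j z, A_k z\rangle,
\]
for $k,l \in \{5,6,7\}$ using the explicit matrices $A_1,\dots,A_7$ from Lemma \ref{Lemma_complement_to_anti-commuting_matrices}.

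First I would exploit the anti-commutation relations \eqref{antic} to simplify $b_{ij}^k$. Since the $A_i$ are skew-symmetric with $A_i^2 = -\mathrm{Id}$ and $A_iA_j = -A_jA_i$ for $i \neq j$, the products $A_iA_j$ for $i<j$ are again skew-symmetric, and for distinct indices $i,j,k$ the triple product $A_iA_jA_k$ is skew-symmetric as well. Using skew-symmetry, $\langle A_iA_j z, A_k z\rangle = -\langle A_k A_i A_j z, z\rangle$. The key structural observation is a case distinction on how many of $i,j,k$ coincide: if $k \in \{i,j\}$, say $k=i$, then $A_kA_iA_j = A_i^2 A_j = -A_j$, which is skew-symmetric, so $\langle A_j z, z\rangle = 0$; thus $b_{ij}^k = 0$ whenever $k$ equals $i$ or $j$. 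Hence only terms with $i,j,k$ pairwise distinct survive, and there $b_{ij}^k(z) = 2\langle A_iA_jA_k z, z\rangle$. This already cuts the sum down dramatically: for each fixed $k \in \{5,6,7\}$, the pairs $\{i,j\} \subset \{1,2,3,4\}$ with $i,j \neq k$ — wait, $k \notin \{1,2,3,4\}$, so actually all $\binom{4}{2}=6$ ordered-pair contributions (12 ordered pairs, or 6 with the factor 2 from symmetry $b_{ij}^k = -b_{ji}^k$) contribute, but $A_iA_jA_k$ for $\{i,j\}\subset\{1,2,3,4\}$, $k\in\{5,6,7\}$ is a product of three distinct anti-commuting skew matrices, hence skew-symmetric only when... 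I should be careful: a product of an odd number of anti-commuting skew-symmetric involutions is skew-symmetric, so $\langle A_iA_jA_k z,z\rangle$ need not vanish, good.

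Next I would plug in the block forms: $A_1,A_2,A_3$ are off-diagonal blocks $\begin{pmatrix} 0 & B_i \\ B_i & 0\end{pmatrix}$ with $B_i$ the right-quaternion-multiplications, $A_4 = \begin{pmatrix} 0 & I \\ -I & 0\end{pmatrix}$, and $A_5,A_6,A_7 = \begin{pmatrix} \mathbf{l} & 0 \\ 0 & -\mathbf{l}\end{pmatrix}$ for $\mathbf{l} \in \{\mathbf{i},\mathbf{j},\mathbf{k}\}$ acting as left multiplication on $\mathbb{H} \cong \mathbb{R}^4$. Writing $z = (x,y)$ with $x,y \in \mathbb{H}$, one computes each $A_iA_jA_k z$ as a pair of quaternionic expressions in $x,y$ built from left/right multiplications by unit quaternions and the swap/sign operations; the inner product $\langle \cdot, z\rangle$ then becomes $\mathrm{Re}$ of quaternionic products. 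Because $[B_i, \mathbf{l}] = 0$ (right and left multiplications commute) and $B_1B_2B_3 = \pm 1$ (product of the three unit quaternion right-multiplications), these expressions collapse to manageable terms involving $\mathrm{Re}(\bar x y)$, $\|x\|^2$, $\|y\|^2$, and similar invariants. The off-diagonal cases ($k=4$ involved or not) and the diagonal cases ($k \in \{5,6,7\}$, contributions from $A_5,A_6,A_7$) split naturally by whether $A_4$ appears among $\{A_i,A_j\}$ or not. I expect that after squaring and summing, the cross terms organize so that $B_T^{kl} = 0$ for $k \neq l$ by a symmetry/parity argument in the quaternionic units, while each diagonal entry $B_T^{kk}$ yields the same value $16(1 - 2\|x\|^2\|y\|^2)$ — the term $\|x\|^2\|y\|^2$ arising from the mixed $x,y$ contributions and the constant from $(\|x\|^2+\|y\|^2)^2 = 1$.

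The main obstacle is the bookkeeping of signs and the left-versus-right multiplication structure across the $2\times 2$ quaternionic block matrices: one must keep straight that $B_i$ acts by right multiplication (by $\mathbf{k}, \mathbf{j}, -\mathbf{i}$ respectively) while $A_5, A_6, A_7$ act by left multiplication, and that conjugating through the off-diagonal swap in $A_4$ and in the $A_i$ ($i\le 3$) interchanges the $x$- and $y$-slots. A clean way to control this is to first record a short multiplication table for the products $A_iA_jA_k$ with $i<j$ in $\{1,2,3,4\}$ and $k\in\{5,6,7\}$, expressing each as a block-diagonal or block-off-diagonal matrix whose blocks are single left-or-right quaternion multiplications, and then read off $\langle A_iA_jA_k z, z\rangle$ termwise. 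Once that table is in hand the rest is a finite, if tedious, verification that the Gram matrix $B_T(z)$ is $16(1-2\|x\|^2\|y\|^2)$ times the identity; taking the reciprocal square root of its determinant gives $g(z)$ and completes the proof.
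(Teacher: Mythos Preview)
Your plan is sound and would succeed, but it takes a more laborious route than the paper. You propose to compute each triple product $A_iA_jA_k$ (for $i,j\in\{1,\ldots,4\}$, $k\in\{5,6,7\}$) explicitly as a $2\times 2$ quaternionic block matrix, read off $b_{ij}^k(z)=2\langle A_iA_jA_kz,z\rangle$, and then assemble the Gram matrix $B_T$ by direct summation. The paper instead avoids most of these individual computations by exploiting that $\{A_1z,\ldots,A_7z,z\}$ is an orthonormal basis of $\mathbb{R}^8$: it completes the partial sum $\sum_{i,j=1}^4\langle A_lA_iz,A_jz\rangle^2$ to the full double sum (which by Parseval equals $\|A_l\|_{\mathrm{HS}}^2=8$), subtracts the complementary pieces (each a single-index Parseval sum giving $1$), and is left with only a small residual term $\sum_{i=1}^4\sum_{j=5}^7\langle A_lA_iz,A_jz\rangle^2$. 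Only this residual requires the explicit quaternionic block form, and it collapses quickly to $8\|x\|^2\|y\|^2$ via the orthonormality of $\{B_ix/\|x\|\}$. The off-diagonal entries are handled the same way. Your approach buys conceptual directness (no completion trick), while the paper's buys a much shorter computation with almost no sign bookkeeping. One minor slip in your write-up: a product of three distinct anti-commuting skew-symmetric matrices is \emph{symmetric}, not skew-symmetric; this is precisely why $\langle A_iA_jA_kz,z\rangle$ can be nonzero, as you correctly conclude despite the mislabel.
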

\begin{proof}
We introduce the following notations:
$$A_{\bf i}:=A_5,\hspace{1mm}A_{\bf j}:=A_6,\hspace{1mm}A_{\bf k}:=A_7\hspace{2ex} \text{\it  and }\hspace{2ex} A_8:=Id.$$
Let $l\in\{5,6,7\}$ and $z=(x,y)\in\mathbb{S}^7$.  
{Using the fact that the  skew-symmetric and anti-commuting matrices $A_1,\cdots,A_7$ lie in ${\bf O}(8)$ and that $\{A_1z,\cdots,A_8z\}$ forms an orthonormal basis of $\mathbb{R}^8$, we can write:}
\begin{align*}
B_{T}^{ll}(z)
&= 4 \cdot  \sum_{i,j=1}^4 \big{\langle} A_lA_{i}z, A_{j}z \big{\rangle}^2\\
&= 4 \cdot \left(\|A_l\|_{\textup{HS}}^2 - \sum_{i=5}^8 \sum_{j=1}^8 \big{\langle} A_lA_{i}z,A_{j} z\big{\rangle}^2- 
 \sum_{i=1}^4 \sum_{j=5}^8 \big{\langle} A_l A_{i}z, A_{j}z \big{\rangle}^2 \right)\\
&=4 \cdot \left( \|A_l\|_{\textup{HS}}^2 - \sum_{i=5}^8\underbrace{ \big{\|}A_lA_{i}z\big{\|}^2}_{=1}- \sum_{i=1}^4 \sum_{j=5}^7 
\big{\langle} A_l A_{i}z, A_{j}z \big{\rangle}^2 \right).
\end{align*}
Furthermore, a straightforward calculation shows that for ${\bf l}\neq{\bf m}\in\{{\bf i},{\bf j},{\bf k}\}$:
$$|\langle A_{\bf l}A_iz,A_{\bf m}z\rangle|=2|\langle B_ix,({\bf l}\cdot{\bf m})y\rangle|\hspace{2ex} \text{\it  for } \hspace{2ex} i=1,\cdots,4.$$
Here $B_1,B_2$ and $B_3$ are the matrices defined in Lemma \ref{Lemma_complement_to_anti-commuting_matrices} and $B_4:=Id$.

We assume that $x \ne 0$. Since $\{\|x\|^{-1} B_ix:i=1,\cdots,4\}$ is an orthonormal basis of $\mathbb{H} \cong \mathbb{R}^4$ it follows that for ${\bf l},{\bf m}\in\{{\bf i},{\bf j},{\bf k}\}$:
\begin{equation*}
\sum_{i=1}^4 \langle A_{\bf l}A_iz, A_{\bf m}z\rangle^2= 4 \|x\|^2\| ({\bf l}\cdot{\bf m})y\|^2= 4\|x\|^2\|y\|^2. 
\end{equation*}
Equality also holds in the case $x=0$.
Therefore, we find for $l=5,6,7$: 
\begin{equation*}
B_T^{ll}(z)=4(4-8 \|x\|^2\|y\|^2)= 16(1-2\|x\|^2\|y\|^2). 
\end{equation*}
For $l\neq m\in\{5,6,7\}$ it holds:
\begin{align*}
\frac{1}{4} B_T^{lm}(z)
&=\sum_{i_1,i_2=1}^4 \big{\langle} A_lA_{i_1}z, A_{i_2}z \big{\rangle} \big{\langle} A_mA_{i_1}z,A_{i_2}z \big{\rangle}\\
&= \Big{(}\sum_{i_1,i_2=1}^8 -\sum_{i_1=5}^8 \sum_{i_2=1}^8 - \sum_{i_1=1}^4 \sum_{i_2=5}^8 \Big{)}  
\big{\langle} A_lA_{i_1}z, A_{i_2}z \big{\rangle} \big{\langle} A_mA_{i_1}z,A_{i_2}z \big{\rangle}\\
&= \sum_{i_1=1}^8 \underbrace{\big{\langle} A_mA_{i_1}z, A_lA_{i_1}z \big{\rangle}}_{=0}- \sum_{i_1=5}^8  \underbrace{\big{\langle} A_mA_{i_1}z, A_lA_{i_1}z \big{\rangle}}_{=0}\\
&\hspace{5ex} - \sum_{i_1=1}^4\sum_{i_2=5}^8\big{\langle} A_lA_{i_1}z, A_{i_2}z \big{\rangle} \big{\langle} A_mA_{i_1}z,A_{i_2}z \big{\rangle}. 
\end{align*}
Since the matrices $A_1,\cdots,A_7$ are anti-commuting, it follows that
$$\big{\langle} A_lA_{i_1}z, A_{i_2}z \big{\rangle} \big{\langle} A_mA_{i_1}z,A_{i_2}z \big{\rangle}=0\hspace{2ex} \text{\it  for }\hspace{2ex} i_2\in\{l,m\}.$$
Hence we can write with $i_2\in\{5,6,7\}\backslash \{l,m\}$ {and ${\bf i_2} \in \{ {\bf i}, {\bf j}, {\bf k} \}$ defined by $A_{i_2}= A_{\bf i_2}$: }
\begin{align*}
\frac{1}{4} B_T^{lm}(z)
&=- \sum_{i_1=1}^4\big{\langle} A_lA_{i_1}z, A_{i_2}z \big{\rangle} \big{\langle} A_mA_{i_1}z,A_{i_2}z \big{\rangle}
\\
&= -4 \sum_{Q \in \{ I, B_1,B_2,B_3\}} \big{\langle} Qx, ({\bf l\cdot i_2})y \big{\rangle} \big{\langle} Qx, ({\bf m\cdot i_2})y \big{\rangle}\\
&=-4 \big{\langle} ({\bf l\cdot i_2})y, ({\bf m\cdot i_2})y \big{\rangle}\\
&= -4 \big{\langle} {\bf l}y,{\bf m}y\big{\rangle}=0. 
\end{align*}
We obtain: 
\begin{equation}\label{Matrix_B_T}
 B_T(z)= 16(1-2\|x\|^2\|y\|^2) \cdot \textup{Id} \in \mathbb{R}(3) 
\end{equation}
and therefore, the Popp measure $\mathcal{P}_T$ has the form: 
\begin{equation*}
\mathcal{P}_T(z)=\big{[} 16(1-2\|x\|^2\|y\|^2) \big{]}^{- \frac{3}{2}} d\sigma. 
\end{equation*}
\end{proof}
\section{The nilpotent approximation}\label{nilapp}
Let $z=(x,y)\in\mathbb{S}^7\subset \mathbb{R}^8$. Since $X_1,\cdots,X_7$ is an adapted orthonormal frame for $\mathbb{S}^7_T$, the tangent algebra at $z$ for $\mathbb{S}^7_T$ is the Carnot algebra of step $2$ given by
\begin{equation}\label{nil1}
\mathfrak{g}_z=\mathcal{H}_z\oplus \mathcal{V}_z\simeq \mathbb{R}^7,
\end{equation}
where 
\begin{align*}
\mathcal{H}_z:&=\textup{Span}\{X_i(z):i=1,\cdots,4\}, \\
\mathcal{V}_z:&=\textup{Span}\{X_k(z):k=5,6,7\}.
\end{align*}
For $i,j=1,\cdots,4$ the Lie brackets are given by: 
$$[X_i(z),X_j(z)]:=\sum_{k=5}^{7}\langle [X_i,X_j],X_k\rangle_z X_k(z).$$
 Note that the inner product $\langle \cdot,\cdot\rangle_z$ on $\mathcal{H}_z$ induces an inner product on  the first layer of the graded Lie algebra  $\mathfrak{g}_z$, i.e. $\mathfrak{g}_z$ is a Carnot Lie algebra.

In the following, we need a technical lemma on the local comparison of two subriemannian manifolds. First, we recall the definition of a 
{\it nonsingular Carnot algebra}, see \cite{Eber,Gornet} for more details.\ \\

Let $\mathfrak{g}=\mathfrak{g}_1\oplus\mathfrak{g}_2$ be a Carnot algebra of step $2$, i.e.
$$[\mathfrak{g}_1,\mathfrak{g}_1]=\mathfrak{g}_2 \hspace{2ex} \text{ \it and }\hspace{2ex} [\mathfrak{g}_i,\mathfrak{g}_j]=\{0\}\text{ for }i+j>2.$$
We assume that an inner product $\langle\cdot,\cdot\rangle$ on $\mathfrak{g}_1$ is given. Then every element $Z\in\mathfrak{g}_2^\ast$ induces a representation map $J_Z:\mathfrak{g}_1\longrightarrow\mathfrak{g}_1$ defined by
$$\langle J_Z X,Y\rangle:=Z([X,Y])\hspace{2ex} \text{ \it for }\hspace{2ex} X,Y\in\mathfrak{g}_1.$$
\begin{defn}
We say that the Carnot algebra $(\mathfrak{g},\langle\cdot,\cdot\rangle)$ is {\it nonsingular}, if for all $Z\in\mathfrak{g}_2^\ast\backslash\{0\}$, the induced map $J_Z$ is invertible. Otherwise,  $(\mathfrak{g},\langle\cdot,\cdot\rangle)$ is called {\it singular}.
\end{defn}
Note that if $\varphi:(\mathfrak{g},\langle\cdot,\cdot\rangle)\rightarrow(\mathfrak{g}^\prime,\langle\cdot,\cdot\rangle^\prime)$ is a Lie algebra isomorphism which preserves the inner products (i.e. an isometry), then $(\mathfrak{g}^\prime,\langle\cdot,\cdot\rangle^\prime)$ will be nonsingular (resp. singular) if and only 
if $(\mathfrak{g},\langle\cdot,\cdot\rangle)$ is. Hence we obtain:
\begin{lem}\label{noni}
Let $(M,\mathcal{H},g)$ and $(M^\prime,\mathcal{H}^\prime,g^\prime)$ be step two subriemannian manifolds which  near a point $x \in M$
are locally isometric by $\phi$. If the nilpotent approximation of $M$ at $x\in M$ is nonsingular, then so is the nilpotent approximation of $M^\prime$ at $\phi(x)$.
\end{lem}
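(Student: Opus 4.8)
The plan is to show that a local subriemannian isometry carries the nilpotent approximation of $M$ at $x$ isometrically onto that of $M^\prime$ at $\phi(x)$ as graded Carnot algebras, and then to invoke the observation recorded just above, namely that (non)singularity of a Carnot algebra is preserved under such isometries. So the whole argument reduces to producing the right isometry $\Phi\colon \mathfrak{g}M(x)\to\mathfrak{g}M^\prime(\phi(x))$ and checking it is bracket-compatible.

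Set $y:=\phi(x)$. Since $\phi$ is a local diffeomorphism and $\phi_\ast\colon(\mathcal{H},g)\to(\mathcal{H}^\prime,g^\prime)$ is a bundle isometry near $x$, in particular $\phi_\ast\mathcal{H}=\mathcal{H}^\prime$ there; combined with the naturality of the Lie bracket of vector fields under pushforward, $\phi_\ast[V,W]=[\phi_\ast V,\phi_\ast W]$ for local sections $V,W$ of $\mathcal{H}$, this gives $\phi_\ast\mathcal{H}^2=(\mathcal{H}^\prime)^2$, so $\phi_\ast$ induces a linear isomorphism on the quotients $(\mathcal{H}^2/\mathcal{H})_x\to((\mathcal{H}^\prime)^2/\mathcal{H}^\prime)_y$. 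Together with the fibrewise isometry $\phi_\ast\colon\mathcal{H}_x\to\mathcal{H}^\prime_y$ this yields a grading-preserving linear isomorphism $\Phi\colon\mathfrak{g}M(x)\to\mathfrak{g}M^\prime(y)$ that is an inner-product isometry on the first layer. It is moreover a Lie algebra isomorphism: the only possibly nonzero brackets are those of the first layer, where by definition $[v,w]_{\mathfrak{g}M(x)}=[V,W]_x\bmod\mathcal{H}_x$ for any local sections $V,W$ of $\mathcal{H}$ extending $v,w$ — well defined because the curvature map \eqref{curvature_map} is tensorial — and since $\phi_\ast V,\phi_\ast W$ are then local sections of $\mathcal{H}^\prime$ extending $\phi_\ast v,\phi_\ast w$, the identity $\Phi([v,w]_{\mathfrak{g}M(x)})=[\Phi v,\Phi w]_{\mathfrak{g}M^\prime(y)}$ follows at once from naturality of the bracket.

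Now I would apply the preceding observation. As $\Phi$ is a Lie algebra isomorphism of Carnot algebras that preserves the first-layer inner products, for every $Z\in((\mathcal{H}^\prime)^2/\mathcal{H}^\prime)_y^\ast\setminus\{0\}$ the operator $J_Z^\prime$ on $\mathfrak{g}M^\prime(y)$ is conjugate, via the first-layer part of $\Phi$, to the operator $J_{Z\circ\Phi}$ on $\mathfrak{g}M(x)$, and $Z\circ\Phi\neq 0$; since $\mathfrak{g}M(x)$ is nonsingular the latter is invertible, hence so is $J_Z^\prime$, i.e. $\mathfrak{g}M^\prime(\phi(x))$ is nonsingular. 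The one step requiring care is the verification that $\phi_\ast$ genuinely descends to the second layer and is bracket-compatible — precisely the interplay of naturality of the Lie bracket with tensoriality of the curvature map; once this is granted, everything else is formal and the conclusion is immediate.
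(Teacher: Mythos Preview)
Your proof is correct and follows the same approach as the paper. In fact, the paper does not give a separate proof of this lemma at all: it simply records the observation that a Lie algebra isomorphism preserving the first-layer inner product preserves (non)singularity, and then states the lemma as an immediate consequence (``Hence we obtain:''). Your argument makes explicit the step the paper leaves implicit, namely that a local subriemannian isometry $\phi$ induces a grading-preserving Lie algebra isomorphism $\Phi\colon\mathfrak{g}M(x)\to\mathfrak{g}M^\prime(\phi(x))$ which is an isometry on the first layer; once that is established, both you and the paper finish in the same way.
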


By considering $\mathbb{S}^7_Q$ as a quaternionic contact manifold, it is easy to see that its tangent algebra can be identified at every point with the quaternionic Heisenberg Lie algebra, which, in particular, is non-singular. For the trivializable subriemannian structure on $\mathbb{S}^7$, the situation is completely different. As we will see, its tangent algebra can be different from point to point.\ \\

Let $\alpha,\beta,\gamma\in\mathbb{R}$ and consider the vertical vector field
$$Z:=\alpha X_5(z)+\beta X_6(z)+\gamma X_7(z)\in\mathcal{V}_z.$$ 
By declaring the vectors $X_5(z),X_6(z),X_7(z)$ to be orthonormal, we obtain an inner product on $\mathcal{V}_z$  which again is denoted by $\langle \cdot, \cdot \rangle$. This induces an identification of $\mathcal{V}_z^\ast$ with $\mathcal{V}_z$ so that we can write for $J_Z:\mathcal{H}_z\longrightarrow\mathcal{H}_z$:
$$\langle J_Z X,Y\rangle_z=\langle Z,[X,Y]\rangle_z \hspace{2ex} \text{\it  for } \hspace{2ex} X,Y\in\mathcal{H}_z.$$
Let $A(\alpha,\beta,\gamma)$ denote the following element of $\mathbb{H}$:
$$A(\alpha,\beta,\gamma):=\alpha{\bf i}+\beta{\bf j}+\gamma{\bf k}.$$
Then a straightforward calculation shows that:
\begin{align*}
\langle Z,[X_1(z),X_2(z)]\rangle_z &=2\langle A(\alpha,\beta,\gamma)x,B_3x\rangle-2\langle A(\alpha,\beta,\gamma)y,B_3y\rangle =a-d\\
\langle Z,[X_1(z),X_3(z)]\rangle_z &=-2\langle A(\alpha,\beta,\gamma)x,B_2x\rangle+2\langle A(\alpha,\beta,\gamma)y,B_2y\rangle =-b+e\\
\langle Z,[X_1(z),X_4(z)]\rangle_z &=2\langle A(\alpha,\beta,\gamma)x,B_1x\rangle+2\langle A(\alpha,\beta,\gamma)y,B_1y\rangle =c+f\\
\langle Z,[X_2(z),X_3(z)]\rangle_z &=2\langle A(\alpha,\beta,\gamma)x,B_1x\rangle-2\langle A(\alpha,\beta,\gamma)y,B_1y\rangle =c-f\\
\langle Z,[X_2(z),X_4(z)]\rangle_z &=2\langle A(\alpha,\beta,\gamma)x,B_2x\rangle+2\langle A(\alpha,\beta,\gamma)y,B_2y\rangle =b+e\\
\langle Z,[X_3(z),X_4(z)]\rangle_z &=2\langle A(\alpha,\beta,\gamma)x,B_3x\rangle+2\langle A(\alpha,\beta,\gamma)y,B_3y\rangle =a+d.
\end{align*}

Hence, with respect to the basis $\{X_i(z):i=1,\cdots,4\}$,  the operator $J_Z$ can be represented by a skew-symmetric matrix of the form: 
\begin{equation}\label{max}
\begin{pmatrix}
 0 & d-a & b-e & -c-f\\
 a-d & 0 & -c+f & -b-e\\
 -b+e & c-f & 0 & -a-d\\
 c+f & b+e & a+d & 0\\
\end{pmatrix}
\end{equation}
with $a,b,c,d,e,f\in\mathbb{R}$ as above.\ \\

Note that  the matrix in (\ref{max}) has the determinant: $$(a^2+b^2+c^2-d^2-e^2-f^2)^2.$$
By using the following identity for $\omega\in\mathbb{R}^4$ :
$$\langle A(\alpha,\beta,\gamma)\omega,C\omega\rangle^2+\langle A(\alpha,\beta,\gamma)\omega,D\omega\rangle^2+\langle A(\alpha,\beta,\gamma)\omega,E\omega\rangle^2=(\alpha^2+\beta^2+\gamma^2)\|\omega\|^4,$$
we calculate the determinant of $J_Z$:
$$ \det J_Z= 16(\|x\|^2-\|y\|^2)^2(\alpha^2+\beta^2+\gamma^2)^2.$$
Hence, if $\|x\|\neq \|y\|$ then the operator $J_Z$ is invertible for all $Z\in\mathcal{V}_z\backslash\{0\}$.
\begin{lem}\label{sing}
Let $z=(x,y)\in\mathbb{S}^7_T$. Then the tangent algebra of $\mathbb{S}^7_T$ at $z$  is non-singular if and only if $\|x\|\neq\|y\|$.
\end{lem}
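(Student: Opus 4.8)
The plan is to leverage the determinant computation already carried out just above the statement. The preceding paragraphs establish that for an arbitrary nonzero vertical vector $Z = \alpha X_5(z) + \beta X_6(z) + \gamma X_7(z) \in \mathcal{V}_z$, the associated operator $J_Z \colon \mathcal{H}_z \to \mathcal{H}_z$ is represented, in the orthonormal basis $\{X_i(z): i=1,\dots,4\}$, by the skew-symmetric matrix \eqref{max}, and that
$$\det J_Z = 16(\|x\|^2 - \|y\|^2)^2 (\alpha^2 + \beta^2 + \gamma^2)^2.$$
The lemma is then essentially a bookkeeping statement extracting the nonsingularity criterion from this formula, so the proof will be short.

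First I would fix $z = (x,y) \in \mathbb{S}^7_T$ and recall that, by definition, the tangent (Carnot) algebra $\mathfrak{g}_z = \mathcal{H}_z \oplus \mathcal{V}_z$ is nonsingular precisely when $J_Z$ is invertible for every $Z \in \mathcal{V}_z \setminus \{0\}$; here the identification $\mathcal{V}_z^\ast \simeq \mathcal{V}_z$ uses the inner product declaring $X_5(z), X_6(z), X_7(z)$ orthonormal, exactly as set up before the statement. Since $\mathcal{H}_z$ is $4$-dimensional, $J_Z$ is invertible iff $\det J_Z \ne 0$. For $Z \ne 0$ we have $(\alpha,\beta,\gamma) \ne (0,0,0)$, hence $\alpha^2 + \beta^2 + \gamma^2 > 0$, and therefore the displayed formula gives $\det J_Z \ne 0$ if and only if $\|x\|^2 \ne \|y\|^2$, i.e. (both norms being nonnegative) if and only if $\|x\| \ne \|y\|$.

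This yields both implications at once. If $\|x\| \ne \|y\|$, then $\det J_Z \ne 0$ for \emph{every} $Z \in \mathcal{V}_z \setminus \{0\}$, so $\mathfrak{g}_z$ is nonsingular. Conversely, if $\|x\| = \|y\|$, then $\det J_Z = 0$ for every $Z \in \mathcal{V}_z$, so in particular $J_Z$ fails to be invertible for any nonzero $Z$; thus $\mathfrak{g}_z$ is singular. (Note that in the latter case the degeneracy is total, not just generic, which is consistent with $\|x\| = \|y\|$ forcing $\|x\|^2\|y\|^2 = \tfrac14$ in Lemma \ref{popp}, i.e.\ with the Popp density blowing up.) I do not expect a genuine obstacle: the only point requiring a word of care is the identification $\mathcal{V}_z^\ast \cong \mathcal{V}_z$ and the resulting description of $J_Z$ by \eqref{max}, but this is precisely what the computation preceding the statement provides, so the lemma follows immediately.
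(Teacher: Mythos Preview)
Your proof is correct and follows exactly the paper's approach: the determinant formula $\det J_Z = 16(\|x\|^2-\|y\|^2)^2(\alpha^2+\beta^2+\gamma^2)^2$ is established in the paragraphs immediately preceding the lemma, and the statement is a direct consequence. One small correction to your aside: on the set $\|x\|=\|y\|$ one has $\|x\|^2\|y\|^2=\tfrac14$, but the Popp density $g(z)=[16(1-2\|x\|^2\|y\|^2)]^{-3/2}$ then equals $8^{-3/2}$ and is merely maximal there, not singular; this does not affect the argument.
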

Using Lemma $\ref{noni}$ and Lemma $\ref{sing}$ we conclude that:
\begin{thm}
The subriemannian manifolds $\mathbb{S}^7_Q$ and $\mathbb{S}^7_T$ are not locally isometric. Furthermore, the isometry group  $\mathcal{I}(\mathbb{S}^7_T)$ of the trivializable subriemannian structure does not act transitively on $\mathbb{S}^7$.
\end{thm}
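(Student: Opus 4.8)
The plan is to deduce the theorem directly from Lemma~\ref{noni} and Lemma~\ref{sing}, treating the two assertions in turn.

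First I would establish the failure of local isometry. By Lemma~\ref{sing}, at a point $z=(x,y)\in\mathbb{S}^7_T$ with $\|x\|=\|y\|$ (such points exist: take any $z$ with $\|x\|^2=\|y\|^2=\tfrac12$) the tangent algebra of $\mathbb{S}^7_T$ is singular. On the other hand, the tangent algebra of $\mathbb{S}^7_Q$ at every point is the quaternionic Heisenberg Lie algebra, which is nonsingular (as recalled in the text preceding Lemma~\ref{sing}). If there were a local subriemannian isometry $\phi$ from a neighbourhood of such a $z$ in $\mathbb{S}^7_T$ onto an open subset of $\mathbb{S}^7_Q$, then Lemma~\ref{noni} would force the tangent algebra of $\mathbb{S}^7_Q$ at $\phi(z)$ to be singular — a contradiction. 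Hence no such $\phi$ exists near any point with $\|x\|=\|y\|$, so $\mathbb{S}^7_T$ and $\mathbb{S}^7_Q$ are not locally isometric. (One should phrase this symmetrically: a local isometry in either direction restricts to a local isometry whose source contains the bad point, so either direction is ruled out.)

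Next I would deduce the non-transitivity of $\mathcal{I}(\mathbb{S}^7_T)$. Suppose, for contradiction, that $\mathcal{I}(\mathbb{S}^7_T)$ acts transitively on $\mathbb{S}^7$. Pick $z_0=(x_0,y_0)$ with $\|x_0\|=\|y_0\|$ and $z_1=(x_1,y_1)$ with $\|x_1\|\ne\|y_1\|$; both exist. By transitivity there is a global subriemannian isometry $\psi\in\mathcal{I}(\mathbb{S}^7_T)$ with $\psi(z_1)=z_0$. A global isometry is in particular a local isometry near $z_1$, so Lemma~\ref{noni} (applied with $M=M'=\mathbb{S}^7_T$) implies that nonsingularity of the tangent algebra at $z_1$ passes to the tangent algebra at $\psi(z_1)=z_0$. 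But Lemma~\ref{sing} says the tangent algebra at $z_1$ is nonsingular while that at $z_0$ is singular — a contradiction. Therefore $\mathcal{I}(\mathbb{S}^7_T)$ cannot act transitively.

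I do not expect a genuine obstacle here: both assertions are short corollaries of the two preceding lemmas, and the only care needed is the bookkeeping of which direction a (local) isometry is applied in and the observation that a global isometry is automatically a local one. The substantive work has already been done in computing $\det J_Z=16(\|x\|^2-\|y\|^2)^2(\alpha^2+\beta^2+\gamma^2)^2$ and in Lemma~\ref{noni}; everything in this theorem is then a matter of invoking the existence of points of each of the two types on $\mathbb{S}^7$.
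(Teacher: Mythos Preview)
Your proposal is correct and follows exactly the paper's approach: the authors simply state ``Using Lemma~\ref{noni} and Lemma~\ref{sing} we conclude that'' and give the theorem, leaving the details you spelled out to the reader. Your write-up just makes explicit the two contradictions (singular vs.\ nonsingular nilpotent approximation) that are implicit in that one-line deduction.
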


\section{On the type of distributions}\label{type}
We have seen that the tangent algebras of $\mathbb{S}^7_T$ are  nonsingular outside the set
$$\mathcal{S}:=\{z=(x,y)\in\mathbb{S}^7:\|x\|=\|y\|\}.$$
In the following we show that the trivializable distribution $\mathcal{H}_T$ fails to be elliptic on this singular set $\mathcal{S}$.\ \\

Recall that the curvature map (\ref{curvature_map}) of the distribution $\mathcal{H}_T$ is defined by
\begin{align*}
F:\Lambda^2\mathcal{H}_T&\longrightarrow T\mathbb{S}^7/\mathcal{H}_T\\
(X,Y)&\longmapsto F(X,Y):=-[X,Y] \text{ mod } \mathcal{H}_T.
\end{align*}
The dual curvature map $\omega$ in (\ref{dual_curvature_map}) is then given as the dual map, i.e.
\begin{align*}
\omega:\mathcal{H}_T^\perp&\longrightarrow \Lambda^2\mathcal{H}_T^\ast\\
\lambda &\longmapsto \omega(\lambda),
\end{align*}
with $$\omega(\lambda)(X\wedge Y):=-\lambda([X,Y]) \hspace{2ex} \textup{\it  for all} \hspace{2ex}  X,Y\in\mathcal{H}_T.$$
 Using the standard Riemannian metric on $\mathbb{S}^7$, we identify $\mathcal{H}^\perp_T$ with $$\mathcal{V}:=\text{Span}\{\langle X_j,\cdot\rangle:j=5,6,7\}.$$
 The distribution $\mathcal{H}_T$ is generated by globally defined vector fields $X_1,\cdots,X_4$ and this induces a specific horizontal form, namely
  $$\eta_{\mathcal{H}_T}:=\eta_1\wedge\cdots\wedge\eta_4\in \Lambda^4\mathcal{H}^\ast_T,$$
where $\eta_1,\cdots,\eta_4$  denotes the frame dual to $X_1,\cdots,X_4$. 
Now the dual curvature map $\omega$ induces a  family parametrized over $M$  
of real quadratic forms $Q:=\omega^2/\eta_{\mathcal{H}_T}$ on $\mathcal{H}_T^\perp\simeq\mathcal{V}$ defined by:
\begin{align*}
\omega^2:\mathcal{H}_T^\perp&\longrightarrow \Lambda^4\mathcal{H}^\ast_T\\
\lambda &\longmapsto \omega(\lambda)\wedge\omega(\lambda)=Q(\lambda)\eta_{\mathcal{H}_T}.
\end{align*}
In the following lemma we compute the quadratic form $Q$ for the trivializable subriemannian structure on $\mathbb{S}^7.$
\begin{lem}
Let $\lambda=\sum_{l=5}^{7}\lambda^l X_l\in\mathcal{V}\simeq\mathcal{H}_T^\perp$. Then the quadratic form $Q$ is given by
$$Q(\lambda)=2\sum_{k,l=5}^{7}\left(b_{12}^lb_{34}^k+b_{14}^lb_{23}^k-b_{13}^lb_{24}^k\right)\lambda^l\lambda^k,$$
 where for $i,j =1, \ldots, 4$ the coefficients $b_{ij}^k$ have been defined in (\ref{Definition_b_i_j_l}). 

\end{lem}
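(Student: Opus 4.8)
The plan is to compute $\omega(\lambda)$ explicitly in the dual frame $\eta_1,\dots,\eta_4$ and then square it. By definition, $\omega(\lambda)(X_i\wedge X_j) = -\lambda([X_i,X_j])$ for $i,j\in\{1,\dots,4\}$. Writing $\lambda = \sum_{l=5}^7 \lambda^l X_l$ and recalling from \eqref{Definition_b_i_j_l} that $\langle[X_i,X_j],X_l\rangle = b_{ij}^l$, we get $\omega(\lambda)(X_i\wedge X_j) = -\sum_{l=5}^7 \lambda^l b_{ij}^l$. Hence, as an element of $\Lambda^2\mathcal{H}_T^\ast$,
\begin{equation*}
\omega(\lambda) = -\sum_{1\le i<j\le 4}\Big(\sum_{l=5}^7 \lambda^l b_{ij}^l\Big)\,\eta_i\wedge\eta_j.
\end{equation*}

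Next I would wedge $\omega(\lambda)$ with itself. In dimension $4$ one has the three basic decomposable $4$-forms $\eta_1\wedge\eta_2\wedge\eta_3\wedge\eta_4$, and using $\eta_i\wedge\eta_j\wedge\eta_k\wedge\eta_l = \varepsilon\,\eta_{\mathcal{H}_T}$ where $\varepsilon$ is the sign of the permutation, the wedge of $\sum c_{ij}\eta_i\wedge\eta_j$ with itself equals $2(c_{12}c_{34} - c_{13}c_{24} + c_{14}c_{23})\,\eta_{\mathcal{H}_T}$. Applying this with $c_{ij} = -\sum_l \lambda^l b_{ij}^l$ yields
\begin{equation*}
\omega(\lambda)\wedge\omega(\lambda) = 2\sum_{k,l=5}^7\big(b_{12}^l b_{34}^k - b_{13}^l b_{24}^k + b_{14}^l b_{23}^k\big)\lambda^l\lambda^k\,\eta_{\mathcal{H}_T},
\end{equation*}
which is exactly the claimed formula for $Q(\lambda) = \omega^2(\lambda)/\eta_{\mathcal{H}_T}$.

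The only genuine content is the combinatorial identity for $(\sum c_{ij}\eta_i\wedge\eta_j)\wedge(\sum c_{ij}\eta_i\wedge\eta_j)$ on a $4$-dimensional space; everything else is bookkeeping with the definitions of $\omega$ and of the $b_{ij}^k$. I expect the main (minor) obstacle to be getting the signs right: one must be careful that $\omega(\lambda)(X_i\wedge X_j) = -\lambda([X_i,X_j])$ carries a minus sign, that $b_{ij}^k = -b_{ji}^k$, and that the cross-term $\eta_i\wedge\eta_j\wedge\eta_k\wedge\eta_l$ picks up the correct permutation sign — the net effect is that the quadratic form is symmetric in $k,l$ (which is automatic since $\lambda^k\lambda^l$ is) and the sign pattern $+,-,+$ in front of $b_{12}b_{34}$, $b_{13}b_{24}$, $b_{14}b_{23}$ survives.
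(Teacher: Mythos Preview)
Your proposal is correct and follows the same approach as the paper: compute $\omega(\lambda)=-\sum_{i<j}\langle\lambda,[X_i,X_j]\rangle\,\eta_i\wedge\eta_j$ and then square it. In fact you make explicit the ``straightforward calculation'' the paper omits, namely the identity $(\sum_{i<j}c_{ij}\,\eta_i\wedge\eta_j)^2=2(c_{12}c_{34}-c_{13}c_{24}+c_{14}c_{23})\,\eta_{\mathcal{H}_T}$, so there is nothing to add.
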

\begin{proof}
For $X=\sum_{i=1}^{4}\alpha_i X_i\text{ and }Y=\sum_{j=1}^{4}\beta_j X_j\in\mathcal{H}_T$ it holds:
\begin{align*}
\omega(\lambda)(X\wedge Y)&=-\langle \lambda,[X,Y]\rangle\\
&=-\sum_{i,j=1}^{4}\alpha_i \beta_j\langle \lambda,[X_i,X_j]\rangle\\
&=-\sum_{1\leq i <j\leq 4}(\alpha_i \beta_j-\alpha_j \beta_i)\langle \lambda,[X_i,X_j]\rangle\\
&=-\sum_{1\leq i <j\leq 4}\langle \lambda,[X_i,X_j]\rangle \eta_i\wedge\eta_j(X,Y).
\end{align*}
Hence the dual curvature map $\omega$ is given by :
$$\omega(\lambda)=-\sum_{1\leq i <j\leq 4}\langle \lambda,[X_i,X_j]\rangle \eta_i\wedge\eta_j,$$
with $$\langle \lambda,[X_i,X_j]\rangle={\sum_{l=5}^{7}b_{ij}^l\lambda^l}.$$
A straightforward calculation shows now that
$$\omega(\lambda)^2=\left(2\sum_{k,l=5}^{7}\left(b_{12}^lb_{34}^k+b_{14}^lb_{23}^k-b_{13}^lb_{24}^k\right)\lambda^l\lambda^k\right)\eta_{\mathcal{H}_T}.$$
\end{proof}
We set for $k,l\in\{5,6,7\}$:
$$T^{lk}:=b_{12}^lb_{34}^k+b_{12}^kb_{34}^l+b_{14}^lb_{23}^k+b_{14}^kb_{23}^l-b_{13}^lb_{24}^k-b_{13}^kb_{24}^l.$$
Using similar arguments as for the computation of the Popp volume for $\mathbb{S}^7_T$, we find that the off-diagonal symbols $T^{lk}$ vanish and that
$$T^{11}=T^{22}=T^{33}=2(\|x\|^2-\|y\|^2).$$
Hence, it follows that the quadratic form $Q$ for the trivializable structure $\mathbb{S}^7_T$ is given explicitly by
$$Q(\lambda)=2\sum_{l=5}^{7}(\|x\|^2-\|y\|^2)(\lambda^l)^2.$$ 
\begin{cor}
The trivializable distribution $\mathcal{H}_T$ on $\mathbb{S}^7$ is of elliptic type on the open dense subset $\{(x,y)\in\mathbb{S}^7:\|x\|\neq\|y\|\}$. Otherwise, it is of type $(0,0)$.
\end{cor}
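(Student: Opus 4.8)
The plan is to deduce the statement directly from the explicit formula for the quadratic form $Q$ established just above, namely
$$Q(\lambda) = 2\bigl(\|x\|^2 - \|y\|^2\bigr)\sum_{l=5}^{7}(\lambda^l)^2, \qquad \lambda = \sum_{l=5}^{7}\lambda^l X_l \in \mathcal{V}\simeq \mathcal{H}_T^{\perp},$$
together with the definitions of \emph{elliptic type} and of \emph{type $(r,s)$} recalled in Section~\ref{subgeo}. The key observation is that $\sum_{l=5}^7 (\lambda^l)^2$ is a positive definite quadratic form on the $3$-dimensional space $\mathcal{H}_T^{\perp}$, so at a point $z=(x,y)\in\mathbb{S}^7$ both the rank and the sign of $Q$ are controlled entirely by the scalar factor $\|x\|^2-\|y\|^2$.

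First I would consider a point $z=(x,y)$ with $\|x\|\neq\|y\|$. If $\|x\|>\|y\|$ the factor is strictly positive, so $Q$ is positive definite with signature $(3,0)$; if $\|x\|<\|y\|$ it is strictly negative, so $Q$ is negative definite with signature $(0,3)$. In either case $\omega^2/\eta_{\mathcal{H}_T}$ is a quadratic form of signature $(3,0)$ or $(0,3)$ on $\mathcal{H}_T^{\perp}$, which by definition means that $\mathcal{H}_T$ is of elliptic type at $z$. On the other hand, for $z\in\mathcal{S}=\{\|x\|=\|y\|\}$ the scalar factor vanishes identically, hence $Q\equiv 0$; its signature is $(0,0)$, i.e.\ $\mathcal{H}_T$ is of type $(0,0)$ there.

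It remains to check that $U:=\{(x,y)\in\mathbb{S}^7:\|x\|\neq\|y\|\}$ is open and dense. Openness is immediate, since $U$ is the preimage of $\mathbb{R}\setminus\{0\}$ under the continuous function $(x,y)\mapsto \|x\|^2-\|y\|^2$. For density, observe that on $\mathbb{S}^7$ the equality $\|x\|=\|y\|$ forces $\|x\|^2=\|y\|^2=\tfrac12$, so $\mathcal{S}$ is a compact codimension-one submanifold (diffeomorphic to $\mathbb{S}^3\times\mathbb{S}^3$) and its complement $U$ is dense. Since the actual computational content is entirely contained in the preceding lemma and in the evaluation of the symbols $T^{lk}$, there is essentially no obstacle at this stage; the one point to keep in mind is that in the definition of ``type'' the signature is only specified up to transposition of its two entries, so $(3,0)$ and $(0,3)$ both describe the elliptic type and $(0,0)$ is unambiguous.
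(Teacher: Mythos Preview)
Your proof is correct and follows exactly the reasoning implicit in the paper: the corollary is stated without separate proof, being an immediate consequence of the explicit formula for $Q$ derived just before it. Your added verification that the set $\{\|x\|\neq\|y\|\}$ is open and dense makes explicit what the paper leaves to the reader.
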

It was shown in \cite{Mo} that every distribution of elliptic type on a $7$-dimensional manifold has a finite dimensional symmetry group of maximal dimension $21$. Furthermore, the sphere $\mathbb{S}^7_Q$ equipped with the quaternionic Hopf distribution $\mathcal{H}_Q$  has a symmetry group of maximal dimension. 
The trivializable structure on $\mathbb{S}^7$ is everywhere elliptic on $\mathbb{S}^7$ except on $\mathcal{S}$ which is a closed submanifold of $\mathbb{S}^7$ of dimension $6$.\ \\
If $\phi:\mathbb{S}^7_T\longrightarrow\mathbb{S}^7_T$ is a diffeomorphism preserving the distribution $\mathcal{H}_T$, then by Lemma \ref{noni} and Lemma \ref{sing}, the submanifold $\mathcal{S}$ must be invariant under $\phi$ and hence $\phi$ restricts to a diffeomorphism $\mathbb{S}^7\backslash\mathcal{S}\longrightarrow\mathbb{S}^7\backslash\mathcal{S}$ preserving the everywhere elliptic distribution $\mathcal{H}_T$ on $\mathbb{S}^7\backslash\mathcal{S}$. Hence the symmetry group of $\mathbb{S}^7_T$ is also finite dimensional with dimension bounded by $21$.
\vspace{1ex} \par
 In the following, by giving a $3$-dimensional family of subriemannian isometries of $\mathbb{S}_T^7$, we show that the isometry group $\mathcal{I}(\mathbb{S}_T^7)$ has dimension greater than or equal to $3$.
 Let $x=(x_0,x_1,x_2,x_3)\in\mathbb{S}^3$ and consider the following matrix
$$C:=\begin{pmatrix}
 x_0 & x_1 & x_2 & x_3\\
 x_3 & -x_2 & x_1 & -x_0\\
 -x_2 & -x_3 & x_0 & x_1\\
 -x_1 & x_0 & x_3 & -x_2\\
\end{pmatrix}\in{\bf O}(4).$$
Then the following relations hold:
\begin{equation}\label{examp}
 B_3C=CB_1, \hspace{2ex}
CB_3=-B_1C\hspace{2ex} \text{\it  and }\hspace{2ex} CB_2=B_2C.
\end{equation}

Let us define the following block matrix in ${\bf O}(8)$:
$$U:=\begin{pmatrix}
0& C\\
C B_1&0\\
\end{pmatrix}.$$
Then based on the relations \eqref{examp} and the commutation relations of the matrices $B_j$ (s. Lemma \ref{Lemma_complement_to_anti-commuting_matrices}) 
we have:
$$UA_1=A_4U\hspace{2ex} \text{ \it and } \hspace{2ex} UA_j=A_{j-1}U\hspace{2ex} \text{\it  for }\hspace{2ex} j=2,3,4.$$
In particular, this imply that $U$ defines a subriemannian isometry of $\mathbb{S}^7_T$.

\section{Small time asymptotics of the heat kernel}\label{small}
An analysis of the intrinsic sublaplacian induced by the quaternionic Hopf structure on $\mathbb{S}^{4n+3}$ was done in \cite{Baud_Wang}. In particular, the first  heat invariants $c_0$ and $c_1$,   i.e. the first two coefficients in the small time asympotic expansion of the heat trace, have been explicitly calculated. In the general setting of subriemannian manifolds, a powerful method in the  analysis of a sublaplacian is given by the so-called {\it nilpotent approximation}. 
 The idea consists in an approximation of the subriemannian manifold at a given point by a nilpotent Lie group endowed with a left-invariant subriemannian structure. In the following, we briefly  recall the relevant concepts. For more details we refer to \cite{Chpo, Verd}.\ \\

Let $(M,\mathcal{H},\langle\cdot,\cdot\rangle)$ be a step two regular subriemannian manifold and by 
$$ \{X_1,\cdots,X_m,X_{m+1},\ldots ,X_n\}$$ 
we denote a local adapted frame at $q\in M$. A system of local coordinates 
$$\psi: M \supset U_q\longrightarrow \mathbb{R}^n=\mathbb{R}^m\oplus\mathbb{R}^{n-m}$$
 is called {\it linearly adapted at $q$}  if 
$$\psi(q)=0 \hspace{2ex} \text{\it  and }\hspace{2ex} \psi_\ast(\mathcal{H}_q)=\mathbb{R}^m.$$
In a system of linearly adapted coordinates at $q$, we have a notion of nonholonomic orders $"ord"$ corresponding to the natural dilations $\delta_\lambda:\mathbb{R}^n\rightarrow\mathbb{R}^n$ defined  for $\lambda>0$ by
$$\delta_\lambda(x_1,\cdots,x_m,x_{m+1},\cdots,x_n):=(\lambda x_1,\cdots,\lambda x_m,\lambda^2 x_{m+1},\cdots,\lambda^2 x_n).$$
More precisely, we set: 
$$\textup{ord}(x_i):=\begin{cases}
1& \text{if } 1\leq i\leq m,\\
2& \text{if } m+1\leq i\leq n 
\end{cases}$$
and 
$$\textup{ord}\left(\frac{\partial}{\partial x_i}\right):=\begin{cases}
-1& \text{if } 1\leq i\leq m,\\
-2& \text{if } m+1\leq i\leq n.
\end{cases}$$
Furthermore, every smooth vector field $X$ on $\mathbb{R}^n$ has an expansion near $0$ of the form:
$$X\simeq X^{(-2)}+X^{(-1)}+\cdots,$$
where $X^{(l)}$ is a polynomial vector field of order $l$, i.e. homogeneous of order $l$ with respect to the dilations $\delta_\lambda$.  A straightforward 
calculation shows the following behaviour of the order function under Lie brackets: 
$$\textup{ord}[X,Y]\leq \textup{ord}(X)+\textup{ord}(Y).$$
In the following we need a special class of linearly adapted coordinates called {\it privileged coordinates}. These are linearly adapted coordinates at $q$ such that every vector field $\psi_\ast(X_i)$ for $1\leq i\leq m$, has an expansion near $0$ where all the homogeneous terms have orders greater than $-1$:
\begin{equation}\label{asymp}
\psi_\ast(X_i)\simeq X_i^{(-1)}+X_i^{(0)}+\cdots.
\end{equation}
An example of privileged coordinates at $q\in M$ is given by the so-called canonical coordinates of the first kind  defined as the inverse of the local diffeomorphism:
$$(x_1,\cdots,x_n)\longmapsto \exp{(x_1X_1+\cdots+x_n X_n)}(q).$$
Here $X_1,\cdots,X_n$ is an adapted local frame at $q$. 
\vspace{1mm}\par 
Note that the vector fields $X_1^{(-1)},\cdots,X_n^{(-1)}$ on $\mathbb{R}^n$ generate a graded step two nilpotent Lie algebra $\widetilde{\mathfrak{g}}(q)$ isomorphic to the tangent Lie algebra $\mathfrak{g}M(q)$ at $q$ (see \cite{Chpo}). Let us denote by $(\widetilde{\mathbb{G}}(q),\ast)$ the induced step two nilpotent Lie group defined as follows. As a manifold we take $\widetilde{\mathbb{G}}(q)=\mathfrak{g}M(q)$ and the group law is defined by
$$\xi_1\ast\xi_2:=\xi_1+\xi_2+\frac{1}{2}[\xi_1,\xi_2] \hspace{2ex} 
\textup{\it for} \hspace{2ex}  \xi_1,\xi_2\in\widetilde{\mathbb{G}}(q).$$
\begin{defn}
Given a smooth measure $\mu$ on $M$, its {\it nilpotentization} at $q$  is a measure $\hat{\mu}^q$ on $\widetilde{\mathbb{G}}(q)$ defined in the chart $\psi$  by
$$\hat{\mu}^q:=\lim_{\epsilon\to 0}\frac{1}{\epsilon^{Q}}\delta_\epsilon^\ast\mu.$$

Here the convergence is understood in the weak-$*$-topology of $C_c(M)^\prime$ and $Q$ denotes the Hausdorff dimension of the regular SR manifold $M$. Due to the regularity assumption of the SR manifold $M$, the measure $\hat{\mu}^q$ is in fact a left-invariant measure on $\widetilde{\mathbb{G}}(q)$ which is nilpotent and hence unimodular. Therefore the measure $\hat{\mu}^q$ is a Haar measure on $\widetilde{\mathbb{G}}(q)$ (see \cite{Verd}).
\end{defn}
Now we recall the relation between the first heat invariant $c_0$ and the nilpotentization of the subriemannian manifold $M$. 
As was mentioned in Section \ref{subgeo}, the heat kernel $K_t(\cdot,\cdot)$  has an asymptotic expansion on the diagonal as $t\to 0$ of the form: 
$${K_t(q,q)}=\frac{1}{t^{Q(q)/2}}\left(c_0(q)+c_1(q)t+\cdots+c_N(q)t^N+o(t^N)\right)$$
for all $N\in\mathbb{N}$ and $q\in M$. Here the {(locally defined)} smooth coefficients $c_i(q)$ are called {\it heat invariants} of the SR manifold $M$.
\vspace{1ex}\par
Let ${K_t^{\widetilde{\mathbb{G}}(q)}}$ denote the heat kernel of the sublaplacian 
$$\Delta_{\textup{sub}}^{\widetilde{\mathbb{G}}(q)}={-}\sum_{i=1}^{m}\left(X_i^{(-1)}\right)^2$$
on $\widetilde{\mathbb{G}}(q)$ with respect to the Haar measure $\hat{\mu}^q$. 
 According to the results in \cite{Verd} the first heat invariant $c_0$ is given by: 
\begin{equation}\label{firstheat}
c_0(q)={K_t^{\widetilde{\mathbb{G}}(q)}(1,0,0).}
\end{equation}
In general, calculating the remaining heat invariants $c_1,c_2,\cdots$ with the help of  the nilpotentization is rather complicated. However, 
in the special case where the horizontal frame is $\mu$-divergence free, the sublaplacian $\Delta_{\textup{sub}}^\mu$ is a sum of squares, i.e
$$\Delta_{\textup{sub}}^\mu=-\sum_{i=1}^{m}X_i^2$$
and the formula for $c_1$ simplifies to {(see the proof of Theorem A in \cite{Verd})}
\begin{equation}\label{second_invariant}
c_1(q)=\int_{0}^{1}\int_{\mathbb{R}^n}K_s^{\widetilde{\mathbb{G}}(q)}(0,\xi)Y\left( K_{1-s}^{\widetilde{\mathbb{G}}(q)}(\xi,0)\right)d\xi ds,
\end{equation}
where $Y$ is a second order differential operator acting with respect to the variable $\xi\in\mathbb{R}^n$. More precisely, it is given by
$$Y:=\sum_{i=1}^{m}X_i^{(-1)}X_i^{(1)}+X_i^{(1)}X_i^{(-1)}+X_i^{(0)}X_i^{(0)}.$$

For the trivializable subriemannian structure on $\mathbb{S}^7$ we have two choices of a natural smooth measure. The first one is the measure induced by the standard Riemannian metric on $\mathbb{S}^7$ which we denote by $d\sigma$ and the second one is the Popp measure $\mathcal{P}_T$. 
The sublaplacian with respect to the Popp measure can be expressed as  (see \cite{ABGF,Bariz}): 
$$\Delta_{\textup{sub}}^{T}=-\sum_{i=1}^{4}\left(X_i^2+\text{div}_{\mathcal{P}_T}(X_i)X_i\right).$$
Here $\{X_1,\cdots,X_4\}$ denotes the globally defined orthonormal frame of $\mathcal{H}_T$. We recall that by Lemma \ref{popp} the Popp measure is given by
 $$\mathcal{P}_T(z)=g(z)d\sigma(z) \hspace{2ex}\mbox{\it where } \hspace{2ex} g(z)=\big{(} 16(1-2\|x\|^2 \|y\|^2) \big{)}^{-\frac{3}{2}}, \hspace{3ex} z=(x,y). $$
 Therefore, using the fact that $X_1,\cdots,X_4$ are Killing vector fields and hence $\sigma$-divergence free and  by using the formula
$$\text{div}_{\mathcal{P}_T}(X)=\text{div}_{\sigma}(X)+X(\log{g})$$
for a smooth vector field $X$ on $M$, we see that
$$\text{div}_{\mathcal{P}_T}(X_i)=X_i(h)\hspace{2ex} \text{ \it for } \hspace{2ex} i=1,\cdots,4$$
with
$$h(z):=-\frac{3}{2}\log{(1-2\|x\|^2\|y\|^2)} \hspace{3ex} \text{ \it for }\hspace{3ex}  z=(x,y)\in\mathbb{S}^7.$$ 
Hence we have the following formula: 
 \begin{lem}\label{intrinsic_sublaplacian_s_7}
The intrinsic sublaplacian $\Delta_{\textup{sub}}^{T}$ on the trivializable SR manifold $\mathbb{S}_T^7$ acting on $C^\infty(\mathbb{S}^7)$ is given by: 
$$\Delta_{\textup{sub}}^{T}=-\sum_{i=1}^{4}\left(X_i^2+X_i(h)X_i\right).$$
\end{lem}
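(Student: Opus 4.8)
The plan is to derive the formula for $\Delta_{\textup{sub}}^{T}$ starting from the general expression of the intrinsic sublaplacian with respect to the Popp measure, namely
\[
\Delta_{\textup{sub}}^{T} = -\sum_{i=1}^{4}\bigl(X_i^2 + \operatorname{div}_{\mathcal{P}_T}(X_i)\,X_i\bigr),
\]
which is available from the cited references. The only thing that needs to be verified is the identity $\operatorname{div}_{\mathcal{P}_T}(X_i) = X_i(h)$ for $i=1,\dots,4$, where $h(z) = -\tfrac{3}{2}\log(1-2\|x\|^2\|y\|^2)$.

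First I would use the relation between the divergence with respect to two conformally related smooth measures: if $\mathcal{P}_T = g\, d\sigma$ with $g$ a smooth nowhere-vanishing positive function, then $\mathcal{L}_X(g\,d\sigma) = (Xg)\,d\sigma + g\,\mathcal{L}_X(d\sigma)$, which upon dividing by $\mathcal{P}_T = g\,d\sigma$ gives $\operatorname{div}_{\mathcal{P}_T}(X) = \operatorname{div}_\sigma(X) + X(\log g)$ for any smooth vector field $X$ on $\mathbb{S}^7$. This is the standard transformation law and is stated right before the lemma in the excerpt. Next I would invoke the fact that each $X_i = X(A_i)$ is a linear vector field coming from a skew-symmetric matrix $A_i \in {\bf O}(8)$, hence its flow consists of rotations in ${\bf SO}(8)$, which are isometries of the standard round metric on $\mathbb{S}^7$; therefore $X_1,\dots,X_4$ are Killing fields and in particular $\operatorname{div}_\sigma(X_i) = 0$. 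Combining these two observations, $\operatorname{div}_{\mathcal{P}_T}(X_i) = X_i(\log g)$.

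It then remains to compute $\log g$ explicitly. By Lemma \ref{popp}, $g(z) = \bigl[16(1-2\|x\|^2\|y\|^2)\bigr]^{-3/2}$, so $\log g(z) = -\tfrac{3}{2}\log 16 - \tfrac{3}{2}\log(1-2\|x\|^2\|y\|^2)$. Since the constant term $-\tfrac{3}{2}\log 16$ is annihilated by any derivation, we get $X_i(\log g) = X_i(h)$ with $h$ as defined, and consequently $\operatorname{div}_{\mathcal{P}_T}(X_i) = X_i(h)$. Substituting into the general formula yields the claimed identity $\Delta_{\textup{sub}}^{T} = -\sum_{i=1}^{4}\bigl(X_i^2 + X_i(h)X_i\bigr)$, completing the proof.

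There is really no serious obstacle here: the argument is a short chain of standard facts. The only point deserving a moment's care is the justification that the $X_i$ are $\sigma$-divergence free — one should note explicitly that they arise as restrictions to $\mathbb{S}^7$ of the generators of the ${\bf SO}(8)$-action, whose orbits are exactly the spheres $\|z\| = \text{const}$, so the flow preserves both $\mathbb{S}^7$ and its induced Riemannian volume. Everything else is bookkeeping with the logarithm.
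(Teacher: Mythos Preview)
Your proof is correct and follows essentially the same route as the paper: start from the general expression $\Delta_{\textup{sub}}^{T}=-\sum_{i=1}^{4}\bigl(X_i^2+\operatorname{div}_{\mathcal{P}_T}(X_i)X_i\bigr)$, use the transformation law $\operatorname{div}_{\mathcal{P}_T}(X)=\operatorname{div}_{\sigma}(X)+X(\log g)$, invoke that the linear vector fields $X_i$ are Killing (hence $\sigma$-divergence free), and identify $X_i(\log g)=X_i(h)$ via Lemma~\ref{popp}. Your added remark on why the flow of $X_i$ lies in ${\bf SO}(8)$ is a welcome justification but not a departure from the paper's argument.
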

\begin{rem} 
\label{Remark_Isospectrality_of_equivalent_triv_structures}
A different choice of the anti-commuting skew-symmetric matrices $A_j$ leads to a subriemannian structure on $\mathbb{S}^7$ (with intrinsic sublaplacian $\Delta_{\textup{sub}}^{T^\prime}$) equivalent to $\mathbb{S}^7_T$ (s. Remark \ref{Remark_different_choices_of_generators}). Furthermore, a subriemannian isometry preserves the Popp measure (s. \cite{Bariz}) and hence, the intrinsic sublaplacians $\Delta_{\textup{sub}}^{T^\prime}$ and 
$\Delta_{\textup{sub}}^{T}$ are unitary equivalent. The last fact can be also directly seen from the representation of the intrinsic sublaplacian in \cite[Corollary 2]{Bariz} and 
the representation of $B_T$ in (\ref{Matrix_B_T}).  In particular, both sublaplacians have the same spectrum, i.e.  the spectrum of the trivializable subriemannian structure on $\mathbb{S}^7$ does not depend on the specific choice of the anti-commuting skew-symmetric matrices $A_j$.
\end{rem}
In the following we use the nilpotent approximation to compute the first heat invariant for the trivializable subriemannian structure endowed with the Popp measure. For this, let $z\in\mathbb{S}^7$ be fixed.
Since $X_1,\cdots,X_7$ is an adapted frame for $\mathbb{S}^7_T$ at $z$, the inverse of the local diffeomorphism
$$\phi^{-1}:(u_1,\cdots,u_7)\longmapsto \exp{(u_1X_1+\cdots+u_7X_7)}(z)$$
defines a system of local adapted coordinates at $z$. Because the adapted frame is a frame of linear vector fields, i.e.
$$X_i(z)=A_iz\hspace{2ex} \text{\it  for }\hspace{2ex} i=1,\cdots,7\hspace{2ex} \text{\it  and }\hspace{2ex} z\in\mathbb{S}^7,$$
the integral curve $\gamma(t)$ of the vector field 
$u_1X_1+\cdots+u_7X_7$ with $u=(u_1,\cdots,u_7)\in\mathbb{R}^7$ and starting at $z$ can be explicitly calculated as:
$$\gamma(t)=\cos{(\|u\|t)}z+\frac{\sin{(\|u\|t)}}{\|u\|}A_uz,$$
where 
$$A_u:=\sum_{i=1}^{7}u_iA_i\hspace{3ex} \text{\it  and }\hspace{3ex} \|u\|=\sqrt{u_1^2+\cdots+u_7^2}.$$
Hence $\phi^{-1}$ is given by:
$$\phi^{-1}(u)=\cos{(\|u\|)}z+\frac{\sin{\|u\|}}{\|u\|}A_uz\hspace{3ex} \text{\it  for }\hspace{3ex} u\in\mathbb{R}^7.$$
We recall that by the  anti-commutation relations (\ref{antic}) of the matrices $A_1,\cdots,A_7$, the matrix  $A_u$ fulfills the identity:
$$A^2_u=-\|u\|^2\text{Id}\hspace{3ex} \text{\it  for all } \hspace{3ex} u\in\mathbb{R}^7.$$
Now, let $w\in\mathbb{S}^7\backslash\{-z\}$ and let us consider the following equation in $u\in B(0,\pi):=\{u\in\mathbb{R}^7:\|u\|<\pi\}$:
\begin{equation}\label{locdif}
w=\cos{(\|u\|)}z+\frac{\sin{\|u\|}}{\|u\|}A_uz.
\end{equation}
Again by using the relations (\ref{antic}) we can write:
$$\langle w,z\rangle=\cos{\|u\|}\hspace{3ex} \text{\it  and }\hspace{3ex} \langle w,A_iz\rangle=\frac{\sin\|u\|}{\|u\|}u_i,$$
for $i=1,\cdots,7$. Hence the equation (\ref{locdif}) has the unique solution $u\in B(0,\pi)$ given by:
\begin{equation}\label{sol}
u_i=\frac{\arccos{\langle w,z\rangle}}{\sqrt{1-\langle w,z\rangle^2}}\langle w,A_iz\rangle \hspace{2ex} \text{\it  for }\hspace{2ex} i=1,\cdots,7.
\end{equation}
We summarize the above calculations in:
\begin{lem}
Canonical coordinates of the first kind at $z\in\mathbb{S}^7$ are given by
\begin{align*}
\phi:\mathbb{S}^7\backslash\{-z\}&\longrightarrow B(0,\pi)\\
w&\longmapsto \phi(w)=u,\\
\end{align*}
and $\phi(z)=0$, where $u$ is given by (\ref{sol}).
\end{lem}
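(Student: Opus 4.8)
The plan is to assemble the explicit computations already made above and to supply the few remaining checks about smoothness and the range of the map. First I would recall that, by definition, canonical coordinates of the first kind at $z$ are the inverse of $\phi^{-1}\colon u\mapsto\exp(u_1X_1+\cdots+u_7X_7)(z)$, and that this is a local diffeomorphism near $0\in\mathbb{R}^7$ because its differential at $0$ carries the standard basis of $\mathbb{R}^7$ onto the adapted basis $\{X_1(z),\ldots,X_7(z)\}$ of $T_z\mathbb{S}^7$. Since the $X_i$ are the linear vector fields $w\mapsto A_iw$, the integral curve of $\sum_iu_iX_i$ through $z$ solves $\dot\gamma=A_u\gamma$ and is therefore $\gamma(t)=e^{tA_u}z$; using $A_u^2=-\|u\|^2\,\textup{Id}$ (from the anti-commutation relations (\ref{antic})) one gets the closed form $\phi^{-1}(u)=\cos\|u\|\,z+\tfrac{\sin\|u\|}{\|u\|}A_uz$ displayed above, which is smooth on all of $\mathbb{R}^7$ because $\tfrac{\sin\|u\|}{\|u\|}A_u=\sum_{k\ge0}\tfrac{(-1)^k}{(2k+1)!}\|u\|^{2k}A_u$ is an entire power series in the entries of $u$.

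Next I would establish that $\phi^{-1}$ restricts to a bijection $B(0,\pi)\to\mathbb{S}^7\setminus\{-z\}$. The structural input is that $\{z,A_1z,\ldots,A_7z\}$ is an orthonormal basis of $\mathbb{R}^8$, which is immediate from the skew-symmetry and anti-commutation of the $A_i\in\mathbf{O}(8)$; hence any $w\in\mathbb{S}^7$ expands as $w=\langle w,z\rangle z+\sum_{i=1}^7\langle w,A_iz\rangle A_iz$. Matching coefficients against $\phi^{-1}(u)$ reduces the equation $w=\phi^{-1}(u)$ to the system $\langle w,z\rangle=\cos\|u\|$ and $\langle w,A_iz\rangle=\tfrac{\sin\|u\|}{\|u\|}u_i$ for $i=1,\ldots,7$. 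For $w\ne-z$ one has $\langle w,z\rangle\in[-1,1)$, which forces $\|u\|=\arccos\langle w,z\rangle\in[0,\pi)$ and then $u_i$ exactly as in (\ref{sol}); the norm constraint is automatically consistent since $\sum_iu_i^2=\tfrac{(\arccos\langle w,z\rangle)^2}{1-\langle w,z\rangle^2}\big(1-\langle w,z\rangle^2\big)=(\arccos\langle w,z\rangle)^2$. Conversely, for $u\in B(0,\pi)$ we get $\langle\phi^{-1}(u),z\rangle=\cos\|u\|>-1$, so $\phi^{-1}(u)\ne-z$. This yields the claimed bijection and identifies its inverse with the map $\phi$ of (\ref{sol}), with $\phi(z)=0$ corresponding to $u=0$.

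It then remains to check that $\phi$ itself is smooth on $\mathbb{S}^7\setminus\{-z\}$, after which $\phi^{-1}$ is a genuine diffeomorphism onto $\mathbb{S}^7\setminus\{-z\}$ and $\phi$ furnishes the desired coordinates. Away from $w=z$ this is clear; the only point of care is the scalar factor $t\mapsto\tfrac{\arccos t}{\sqrt{1-t^2}}$ appearing in (\ref{sol}) as $t=\langle w,z\rangle\to1$, which is in fact smooth (indeed real-analytic) there: writing $t=\cos\theta$ with $\theta\in[0,\pi)$ it equals $\theta/\sin\theta$, an analytic function of $\theta^2=(\arccos t)^2$, and $(\arccos t)^2$ is analytic near $t=1$. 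I expect the only mildly fiddly parts of the whole argument to be this analyticity/removable-singularity check at $w=z$ (and the companion one for $\phi^{-1}$ at $u=0$) together with the bookkeeping that $\phi^{-1}(B(0,\pi))$ is precisely $\mathbb{S}^7\setminus\{-z\}$; everything else is direct linear algebra with the anti-commuting matrices and has essentially been carried out in the preceding discussion.
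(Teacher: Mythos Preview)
Your proposal is correct and follows essentially the same approach as the paper: the lemma in the paper is stated as a summary of the preceding computations (the integral curve formula, the identity $A_u^2=-\|u\|^2\textup{Id}$, and the coefficient-matching using the orthonormal basis $\{z,A_1z,\ldots,A_7z\}$), and you reproduce exactly these steps. You are in fact more thorough than the paper, supplying the smoothness checks at $u=0$ and $w=z$ and the verification that $\phi^{-1}(B(0,\pi))=\mathbb{S}^7\setminus\{-z\}$, which the paper leaves implicit.
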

Next, we compute the  expansion of the horizontal vector fields $X_1,\cdots,X_4$ near $0$ in the chart $\phi$.
Let us define the following smooth functions on $[0,\pi[$:
$$F(u):=\frac{1}{\|u\|^2}-\frac{\cot{\|u\|}}{\|u\|}\hspace{2ex} \text{\it  and }\hspace{2ex} G(u):=\|u\|\cot{\|u\|},$$
with $F(0):=\frac{1}{3}$ and $G(0):=1$.
\vspace{1mm} \\
Then a straightforward computation shows that the pushforwards of the horizontal vector fields $X_1,\cdots,X_4$ by $\phi$ are given on $B(0,\pi)$ by:
$$\left(X_i\right)_\ast=\sum_{j=1}^{7}a_{ij}\frac{\partial}{\partial u_j},$$
where the functions $a_{ij}$ with $b_{ik}^k  $ in (\ref{Definition_b_i_j_l}) are defined by:
\begin{equation}\label{ani}
a_{ij}(u):=G(u)\delta_{ij}+F(u)u_iu_j+\frac{1}{2}\sum_{k=1}^{7}b_{ij}^k(z)u_k.
\end{equation}
For $\epsilon>0$  small, consider the anisotropic expansion of $X_i$ around $0$:
$$X_i^\epsilon:=\epsilon\delta_\epsilon^\ast (X_i)_\ast\simeq X^{(-1)}+\epsilon X_i^{(0)}+\epsilon^2 X_i^{(1)}+\cdots,$$
where $X_i^{(l)}$ is the homogeneous part of $X_i$ of order $l$.

\begin{lem}\label{aniso}
For $i=1,\cdots,4$, it holds:
\begin{align*}
X^{(-1)}_i&=\frac{\partial}{\partial u_i}+\frac{1}{2}\sum_{j=5}^{7}\sum_{k=1}^{4}b_{ij}^ku_k\frac{\partial}{\partial u_j},\\
X_i^{(0)}&=\frac{1}{2}\sum_{j=1}^{4}\sum_{k=1}^{4}b_{ij}^ku_k\frac{\partial}{\partial u_j}+\frac{1}{2}\sum_{j=5}^{7}\sum_{k=5}^{7}b_{ij}^ku_k\frac{\partial}{\partial u_j},\\
X_i^{(1)}&=\frac{1}{2}\sum_{j=1}^{4}\sum_{k=5}^{7}b_{ij}^ku_k\frac{\partial}{\partial u_j}+\frac{1}{3}\sum_{j=1}^{7}u_iu_j\frac{\partial}{\partial u_j}-\frac{1}{3}\sum_{k=1}^{4}u_k^2\frac{\partial}{\partial u_i}.
\end{align*}
Furthermore, for $l\geq 2$:
$$X_i^{(l)}=G^{(l+1)}(u)\frac{\partial}{\partial u_i}+\sum_{j=1}^{7}F^{(l-1)}(u)u_iu_j\frac{\partial}{\partial u_j},$$
where  $F^{(l)}(u)$ (resp. $G^{(l)}(u)$) denotes the homogeneous part of weight $l$ in the anisotropic expansion of $F$ (resp. $G$).
\end{lem}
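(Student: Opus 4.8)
The plan is to substitute the explicit expression (\ref{ani}) for the pushed--forward fields $(X_i)_\ast$ into the blow--up $X_i^\epsilon=\epsilon\,\delta_\epsilon^\ast(X_i)_\ast$ and to read off the homogeneous components by sorting powers of $\epsilon$. Assign the weight $w_j=1$ to $u_j$ for $1\le j\le 4$ and $w_j=2$ for $5\le j\le 7$, so that $\delta_\lambda$ acts on $u_j$ by $\lambda^{w_j}$. Since $\delta_\epsilon$ is linear, a short computation with the chain rule gives
$$\epsilon\,\delta_\epsilon^\ast\Big(\sum_{j=1}^{7}a_j\,\partial_{u_j}\Big)=\sum_{j=1}^{7}\epsilon^{\,1-w_j}\,a_j(\delta_\epsilon u)\,\partial_{u_j},$$
and, writing $a_j(\delta_\epsilon u)=\sum_{w\ge0}\epsilon^{\,w}a_j^{(w)}(u)$ with $a_j^{(w)}$ the $\delta_\lambda$--homogeneous component of $a_j$ of weight $w$, the coefficient of $\epsilon^{\,l+1}$ in $X_i^\epsilon$ is precisely $X_i^{(l)}$. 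Inserting $a_{ij}$ from (\ref{ani}) and using that $i\le 4$ (so $(\delta_\epsilon u)_i=\epsilon u_i$), $(\delta_\epsilon u)_k=\epsilon u_k$ for $k\le 4$, and $(\delta_\epsilon u)_k=\epsilon^2 u_k$ for $k\ge 5$, one obtains for $1\le j\le 4$
$$a_{ij}(\delta_\epsilon u)=G(\delta_\epsilon u)\delta_{ij}+\epsilon^2 F(\delta_\epsilon u)u_iu_j+\tfrac{\epsilon}{2}\sum_{k=1}^{4}b_{ij}^k u_k+\tfrac{\epsilon^2}{2}\sum_{k=5}^{7}b_{ij}^k u_k,$$
and for $5\le j\le 7$, after the factor $\epsilon^{-1}$,
$$\epsilon^{-1}a_{ij}(\delta_\epsilon u)=\epsilon^2 F(\delta_\epsilon u)u_iu_j+\tfrac12\sum_{k=1}^{4}b_{ij}^k u_k+\tfrac{\epsilon}{2}\sum_{k=5}^{7}b_{ij}^k u_k.$$

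Next I expand $F$ and $G$. Both are smooth even functions of $\|u\|$, hence smooth functions of $\|u\|^2=(u_1^2+\cdots+u_4^2)+(u_5^2+\cdots+u_7^2)$, in which the first block is $\delta_\lambda$--homogeneous of weight $2$ and the second of weight $4$. Consequently the anisotropic homogeneous decompositions $F=\sum_{m\ge 0}F^{(2m)}$ and $G=\sum_{m\ge 0}G^{(2m)}$ contain only even weights, so that $F(\delta_\epsilon u)=\sum_{m\ge 0}\epsilon^{2m}F^{(2m)}(u)$ and similarly for $G$. From the Taylor expansions $G(u)=\|u\|\cot\|u\|=1-\tfrac13\|u\|^2+O(\|u\|^4)$ and $F(u)=\tfrac13+O(\|u\|^2)$ one reads off the only pieces of $F,G$ that can reach orders $l\le 1$: namely $G^{(0)}=1$, $G^{(2)}(u)=-\tfrac13(u_1^2+\cdots+u_4^2)$ and $F^{(0)}=\tfrac13$.

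Finally I collect the coefficient of $\epsilon^{\,l+1}$ in $X_i^\epsilon=\sum_{j=1}^{4}a_{ij}(\delta_\epsilon u)\partial_{u_j}+\sum_{j=5}^{7}\epsilon^{-1}a_{ij}(\delta_\epsilon u)\partial_{u_j}$. For $l=-1$ only $G^{(0)}\delta_{ij}$ (from $j\le 4$) and $\tfrac12\sum_{k\le 4}b_{ij}^k u_k$ (from $j\ge 5$) survive; for $l=0$ only $\tfrac12\sum_{k\le 4}b_{ij}^k u_k$ (from $j\le 4$) and $\tfrac12\sum_{k\ge 5}b_{ij}^k u_k$ (from $j\ge 5$); for $l=1$ only $G^{(2)}\delta_{ij}+F^{(0)}u_iu_j+\tfrac12\sum_{k\ge 5}b_{ij}^k u_k$ (from $j\le 4$) and $F^{(0)}u_iu_j$ (from $j\ge 5$), and inserting $G^{(2)}=-\tfrac13\sum_{k\le 4}u_k^2$ and $F^{(0)}=\tfrac13$ reassembles the asserted $X_i^{(1)}$. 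For $l\ge 2$ the linear terms in the $b_{ij}^k$ have disappeared (they only feed the orders $\epsilon^0,\epsilon^1,\epsilon^2$), leaving $G^{(l+1)}(u)\delta_{ij}+F^{(l-1)}(u)u_iu_j$ from $j\le 4$ and $F^{(l-1)}(u)u_iu_j$ from $j\ge 5$, i.e.\ $X_i^{(l)}=G^{(l+1)}(u)\partial_{u_i}+\sum_{j=1}^{7}F^{(l-1)}(u)u_iu_j\partial_{u_j}$, which is the claim.

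The computation is essentially mechanical; the only point that needs care is the anisotropic grading. Because $\|u\|^2$ is not $\delta_\lambda$--homogeneous, the ordinary Taylor expansion of $F$ and $G$ in powers of $\|u\|^2$ must first be re-expanded into anisotropic--homogeneous pieces before powers of $\epsilon$ can be extracted, and one must consistently combine the correspondence ``weight $l\ \leftrightarrow\ $ power $\epsilon^{\,l+1}$'' with the extra factor $\epsilon^{-1}$ produced by the vertical coordinates $u_5,u_6,u_7$.
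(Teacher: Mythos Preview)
Your argument is correct and follows essentially the same route as the paper: you substitute the explicit coefficients $a_{ij}$ from (\ref{ani}) into the anisotropic blow--up, expand $F$ and $G$ via their Taylor series in $\|u\|^2$, and sort the resulting terms by weight, which is precisely how the paper organizes the computation using $X_i^{(l)}=\sum_{j\le 4}a_{ij}^{(l+1)}\partial_{u_j}+\sum_{j\ge 5}a_{ij}^{(l+2)}\partial_{u_j}$. Your write--up is in fact slightly more explicit about the pullback formula $\epsilon\,\delta_\epsilon^\ast(\sum a_j\partial_{u_j})=\sum \epsilon^{1-w_j}a_j(\delta_\epsilon u)\partial_{u_j}$ and about why only even anisotropic weights occur in $F$ and $G$, but these are elaborations of the same method rather than a different approach.
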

\begin{proof}
According to (\ref{ani}) we only need to compute the  expansion of $a_{ij}$ near $0$. We recall that the function $u\longmapsto u_i$ for $i=1,\cdots,4$ (resp. $i=5,\cdots,7$) has order $1$ (resp. $2$). Also the vector field $\frac{\partial}{\partial u_i}$ for $i=1,\cdots,4$ (resp. $i=5,\cdots,7$) has order $-1$ (resp. $-2$). The third term of (\ref{ani}):
$$\frac{1}{2}\sum_{k=1}^{7}b_{ij}^k(z)u_k=\frac{1}{2}\underbrace{\sum_{k=1}^{4}b_{ij}^k(z)u_k}_{\text{order }1}+\frac{1}{2}\underbrace{\sum_{k=5}^{7}b_{ij}^k(z)u_k}_{\text{order }2}$$ 
give us only homogeneous terms of order less than $2$. Furthermore, a straightforward calculation shows that for $\epsilon\rightarrow 0$:
\begin{align*}
F(\delta_\epsilon(u))&\simeq \frac{1}{3}+\sum_{l\geq 1}F^{(l)}(u)\epsilon^l\\
G(\delta_\epsilon(u))&\simeq 1-\frac{1}{3}\sum_{j=1}^{4}u_j^2\epsilon^2+\sum_{l\geq 3}G^{(l)}(u)\epsilon^l.
\end{align*}
Here $F^{(l)}(u)$ and $G^{(l)}(u)$ are homogeneous polynomials in $u$ of order $l$. By arranging homogeneous terms in the expression $(\ref{ani})$ and writing
$$X_i^{(l)}=\sum_{j=1}^{4}a_{ij}^{(l+1)}(u)\frac{\partial}{\partial u_j}+\sum_{j=5}^{7}a_{ij}^{(l+2)}(u)\frac{\partial}{\partial u_j},$$
where $a_{ij}^{(l)}$ denotes the homogeneous term of $a_{ij}$ of order $l$, we obtain the result.
\end{proof}
Note that Lemma \ref{aniso}  not only holds for the trivializable subriemannian structure defined by the  specific matrices $A_1,\cdots,A_7$ from Lemma \ref{Lemma_complement_to_anti-commuting_matrices}, but also for arbitrary skew-symmetric matrices with relations (\ref{antic}).
\vspace{1mm}\par 
Remark that only the first three homogeneous terms in the anisotropic expansion of $X_i$ encode the geometric data 
 $(b_{ij}^k)$ of our subriemannian manifold $\mathbb{S}^7_T$. The remaining homogeneous terms are completely given by the functions $F$ and $G$, which are independent of the chosen matrices $A_1,\cdots,A_7$.\ \\

The tangent group of $\mathbb{S}^7_T$ at $z$ is isomorphic to  the unique simply connected nilpotent Lie group $\widetilde{\mathbb{G}}(z)$ corresponding to the Lie algebra generated by the vector fields:
$$X_1^{(-1)},\cdots, X_4^{(-1)}.$$
By definition, the nilpotentization of the Popp measure at $z$ is the Haar measure $\hat{\mathcal{P}}_T^z$ on $\widetilde{\mathbb{G}}(z)\simeq \mathbb{R}^7$ given in global exponential coordinates $u_1,\cdots,u_7$ by:
$$\hat{\mathcal{P}}_T^z=g(z)du_1\wedge\cdots\wedge du_7.$$
Here $g(z)$ denotes the density appearing  in Lemma \ref{popp}.
 In order to compute the first heat invariant  $c_0$ we need to derive the heat kernel $p^{\widetilde{\mathbb{G}}(z)}$  of the sublaplacian
$$\hat{\Delta}_{\textup{sub}}^z:=\sum_{i=1}^{4}\left(X_i^{(-1)}\right)^2$$
on $\widetilde{\mathbb{G}}(z)\simeq\mathbb{R}^7$ with respect to the Haar measure $\hat{\mathcal{P}}_T^z$. This explicitly is obtained by the 
{\it Beals-Gaveau-Greiner formula} for the sublaplacian  on  general  step two nilpotent Lie groups in \cite{BGG,CCFI}, which we recall next.
For  $\alpha,\beta\in\widetilde{\mathbb{G}}(z)$ it holds:
\begin{equation}\label{beals}
{K_t^{\widetilde{\mathbb{G}}(z)}(\alpha,\beta)}=\frac{1}{(2\pi t)^{5}}\int_{\mathbb{R}^3}e^{-\frac{\varphi(\tau,\,\alpha^{-1}\: * \: \beta)}{t}}\,W(\tau)\,\frac{d\tau}{g(z)},
\end{equation}
where the  {\it action function} $\varphi=\varphi(\tau,\alpha)\in C^{\infty}(\mathbb{R}^{3}\times
\widetilde{\mathbb{G}}(z))$ and the  {\it volume element} $W(\tau)\in C^{\infty}(\mathbb{R}^3)$ are given as follows:
put $\alpha=(a,b)\in \mathbb{R}^4\times\mathbb{R}^3$, then
\begin{align*}
&\varphi(\tau,g)=\varphi(\tau,a,b)=\sqrt{-1}\langle \tau,\,b \rangle 
+\frac{1}{2}\Big{\langle}\sqrt{-1}J_{\tau/2}\coth\bigr(\sqrt{-1}J_{\tau/2}\bigr)
\cdot a,a\Big{\rangle},\\
&W(\tau)=
\left\{\det\frac{\sqrt{-1}J_{\tau/2}}{\sinh \sqrt{-1}J_{\tau/2}}\right\}^{1/2},
\end{align*}
where $\langle b,b^{\prime}\rangle=\sum\limits_{k=1}^3 b_k b_k^{\prime}$ denotes the Euclidean inner product on $\mathbb{R}^3$. 

Next, we compute the eigenvalues of the representation maps $J_Z$, for $Z\in \mathcal{V}_z\simeq\mathbb{R}^3.$
\begin{lem}\label{eig}
Let $z=(x,y)\in\mathbb{S}^7$ and $Z\in\mathcal{V}_z$. Then the eigenvalues of $J_Z$ are 
$$\pm2i(\|x\|^2\pm\|y\|^2)\|Z\|.$$
\end{lem}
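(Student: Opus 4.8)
The plan is to exploit that $J_Z$ is a real skew-symmetric endomorphism of the $4$-dimensional Euclidean space $\mathcal{H}_z$, so that its spectrum is entirely determined by two scalar invariants, each of which has essentially been computed already in Section~\ref{nilapp}. Indeed, the eigenvalues of such an operator occur in purely imaginary conjugate pairs $\pm i\mu_1,\pm i\mu_2$ with $\mu_1,\mu_2\ge 0$, and (using $\operatorname{tr}J_Z=0$, the vanishing of the $3\times3$ principal minors of a skew-symmetric matrix, and $\operatorname{tr}(J_Z^{2})=-\|J_Z\|_{\textup{HS}}^{2}$) its characteristic polynomial has the form $\lambda^{4}+\tfrac12\|J_Z\|_{\textup{HS}}^{2}\lambda^{2}+\det J_Z$. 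Hence $\mu_1^{2}$ and $\mu_2^{2}$ are precisely the two roots of
\begin{equation*}
t^{2}-\tfrac12\|J_Z\|_{\textup{HS}}^{2}\,t+\det J_Z=0,
\end{equation*}
and it suffices to evaluate $\det J_Z$ and $\|J_Z\|_{\textup{HS}}^{2}$ on the explicit matrix \eqref{max}.

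For $Z=\alpha X_5(z)+\beta X_6(z)+\gamma X_7(z)$ I keep the notation of Section~\ref{nilapp}: with $A:=\alpha{\bf i}+\beta{\bf j}+\gamma{\bf k}$ set $a=2\langle Ax,B_3x\rangle$, $b=2\langle Ax,B_2x\rangle$, $c=2\langle Ax,B_1x\rangle$ and let $d,e,f$ be the analogous quantities with $y$ in place of $x$, so that $\|Z\|^{2}=\alpha^{2}+\beta^{2}+\gamma^{2}$. Summing the squares of the entries of \eqref{max} and pairing the $(i,j)$ and $(j,i)$ positions gives $\|J_Z\|_{\textup{HS}}^{2}=4(a^{2}+b^{2}+c^{2}+d^{2}+e^{2}+f^{2})$. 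Applying the quaternionic identity recalled in Section~\ref{nilapp} once with $\omega=x$ and once with $\omega=y$ (the precise matching of $B_1,B_2,B_3$ with $a,b,c$ is irrelevant, since only the symmetric sum enters) yields $a^{2}+b^{2}+c^{2}=4\|Z\|^{2}\|x\|^{4}$ and $d^{2}+e^{2}+f^{2}=4\|Z\|^{2}\|y\|^{4}$, whence $\|J_Z\|_{\textup{HS}}^{2}=16\|Z\|^{2}(\|x\|^{4}+\|y\|^{4})$. Together with the determinant already computed in Section~\ref{nilapp}, namely $\det J_Z=16\|Z\|^{4}(\|x\|^{2}-\|y\|^{2})^{2}=16\|Z\|^{4}(\|x\|^{2}+\|y\|^{2})^{2}(\|x\|^{2}-\|y\|^{2})^{2}$ (the last equality because $\|x\|^{2}+\|y\|^{2}=1$), both invariants are in hand.

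It then remains to solve the quadratic $t^{2}-8\|Z\|^{2}(\|x\|^{4}+\|y\|^{4})\,t+16\|Z\|^{4}(\|x\|^{2}+\|y\|^{2})^{2}(\|x\|^{2}-\|y\|^{2})^{2}=0$. Its discriminant is the perfect square $256\|Z\|^{4}\|x\|^{4}\|y\|^{4}$ (square root $16\|Z\|^{2}\|x\|^{2}\|y\|^{2}$), so the roots are $t=4\|Z\|^{2}\big(\|x\|^{4}+\|y\|^{4}\pm 2\|x\|^{2}\|y\|^{2}\big)=4\|Z\|^{2}(\|x\|^{2}\pm\|y\|^{2})^{2}$. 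Therefore $\mu_1=2(\|x\|^{2}+\|y\|^{2})\|Z\|$ and $\mu_2=2\big|\,\|x\|^{2}-\|y\|^{2}\,\big|\,\|Z\|$, and the eigenvalues of $J_Z$ are $\pm 2i(\|x\|^{2}\pm\|y\|^{2})\|Z\|$, as claimed. I do not expect a genuine obstacle here: the lemma mostly repackages data already assembled in Section~\ref{nilapp}, and the only points requiring a little care are the bookkeeping of the six entries of \eqref{max} in the Hilbert--Schmidt norm and the observation that, once $\operatorname{tr}(J_Z^{2})$ and $\det J_Z$ are known, the spectrum of the skew-symmetric operator $J_Z$ is forced and the resulting quadratic has square discriminant.
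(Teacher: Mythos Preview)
Your proposal is correct and follows essentially the same route as the paper: both compute the characteristic polynomial of the skew-symmetric $4\times4$ matrix \eqref{max} and then solve the resulting biquadratic. The only cosmetic difference is that the paper writes down $P(\lambda)=\lambda^{4}+8(1-2\|x\|^{2}\|y\|^{2})\|Z\|^{2}\lambda^{2}+16(1-4\|x\|^{2}\|y\|^{2})\|Z\|^{4}$ directly from \eqref{max}, whereas you recover these same coefficients more structurally via $\tfrac12\|J_Z\|_{\textup{HS}}^{2}$ and the determinant already obtained in Section~\ref{nilapp} (using $\|x\|^{2}+\|y\|^{2}=1$ to match the two forms).
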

\begin{proof}
 According to (\ref{max}) the characteristic polynomial $P(\lambda)$ of $J_Z$ is given by: 
$$P(\lambda)=\lambda^4+8(1-2\|x\|^2\|y\|^2)\|Z\|^2\lambda^2+16(1-4\|x\|^2\|y\|^2)\|Z\|^4.$$
Hence, a straightforward calculation shows that the roots of $P(\lambda)$ are exactly
$$\pm2i(\|x\|^2\pm\|y\|^2)\|Z\|.$$
\end{proof}

\begin{thm}
The first heat invariant $c_0^T$ of the trivializable subriemannian structure on $\mathbb{S}^7$ is given by
$$c_0^T(z)=\frac{1}{(2\pi)^5g(z)}\int_{\mathbb{R}^3}\frac{\|\tau\|}{\sinh{\|\tau\|}}\cdot\frac{(\|x\|^2-\|y\|^2)\|\tau\|}{\sinh{(\|x\|^2-\|y\|^2)\|\tau\|}}d\tau$$
for $z=(x,y)\in\mathbb{S}^7$.
\end{thm}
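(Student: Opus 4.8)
The plan is to feed the tangent group at $z$ into the explicit Beals--Gaveau--Greiner formula and to read off the answer from the eigenvalue computation of Lemma \ref{eig}. By (\ref{firstheat}) the first heat invariant $c_0^T(z)$ equals the value at the identity, at time $t=1$, of the heat kernel of the sublaplacian $\hat{\Delta}_{\textup{sub}}^z$ on the tangent group $\widetilde{\mathbb{G}}(z)\cong\mathbb{R}^4\times\mathbb{R}^3$, taken with respect to the Haar measure $\hat{\mathcal{P}}_T^z = g(z)\,du_1\wedge\cdots\wedge du_7$. So the first step is to evaluate (\ref{beals}) at $\alpha=\beta=0$ and $t=1$. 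There $\alpha^{-1}\ast\beta$ is the identity, and $\varphi(\tau,0,0)=0$ for all $\tau$ because both summands in the definition of $\varphi$ vanish when $a=0$ and $b=0$; hence the Gaussian factor is identically $1$ and one is left with
\begin{equation*}
c_0^T(z) = \frac{1}{(2\pi)^5 g(z)}\int_{\mathbb{R}^3} W(\tau)\,d\tau ,
\qquad
W(\tau) = \left\{\det\frac{\sqrt{-1}\,J_{\tau/2}}{\sinh\sqrt{-1}\,J_{\tau/2}}\right\}^{1/2}.
\end{equation*}

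The second step is to compute $W(\tau)$. Applying Lemma \ref{eig} with $Z=\tau/2\in\mathcal{V}_z$ (so $\|Z\|=\|\tau\|/2$), the eigenvalues of $J_{\tau/2}$ are $\pm i(\|x\|^2\pm\|y\|^2)\|\tau\|$, hence the four eigenvalues of $\sqrt{-1}\,J_{\tau/2}$ are the real numbers $\pm(\|x\|^2+\|y\|^2)\|\tau\|$ and $\pm(\|x\|^2-\|y\|^2)\|\tau\|$, which on $\mathbb{S}^7$ become $\pm\|\tau\|$ and $\pm(\|x\|^2-\|y\|^2)\|\tau\|$ thanks to $\|x\|^2+\|y\|^2=1$. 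Since the map $s\mapsto s/\sinh s$ is even and strictly positive, the determinant of $\sqrt{-1}\,J_{\tau/2}/\sinh(\sqrt{-1}\,J_{\tau/2})$ is the product of $s/\sinh s$ over these four eigenvalues, i.e. the perfect square
\begin{equation*}
\left(\frac{\|\tau\|}{\sinh\|\tau\|}\cdot\frac{(\|x\|^2-\|y\|^2)\|\tau\|}{\sinh\big((\|x\|^2-\|y\|^2)\|\tau\|\big)}\right)^{2},
\end{equation*}
so that the canonical positive square root $W(\tau)$ is precisely the quantity in parentheses. Substituting back into the integral for $c_0^T(z)$ yields the stated formula.

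Once Lemma \ref{eig} and the Beals--Gaveau--Greiner formula are available the argument is essentially mechanical, and I do not expect any serious obstacle. The points that deserve a little care are: tracking the factor $\tfrac12$ coming from the argument $J_{\tau/2}$; using the relation $\|x\|^2+\|y\|^2=1$ to collapse one pair of eigenvalues to $\pm\|\tau\|$; and noting that the $\pm$-symmetry of the spectrum of $\sqrt{-1}\,J_{\tau/2}$ makes the determinant a perfect square, so the square root defining $W$ is unambiguous. For completeness one may also remark that the integral over $\mathbb{R}^3$ converges: the factor $\|\tau\|/\sinh\|\tau\|$ already decays exponentially, and on the singular set $\mathcal{S}=\{\|x\|=\|y\|\}$ the second factor degenerates to $1$ without affecting convergence.
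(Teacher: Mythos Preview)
Your proof is correct and follows essentially the same route as the paper: both plug the tangent group into the Beals--Gaveau--Greiner formula (\ref{beals}), invoke (\ref{firstheat}), and compute $W(\tau)$ from the eigenvalues in Lemma \ref{eig}. The only cosmetic difference is that the paper first restricts to the dense subset $\{\|x\|\neq\|y\|,\ x\neq 0\}$ where the eigenvalues of $J_Z$ are simple and then extends by the smoothness of $c_0^T$, whereas you handle all points at once by observing that the determinant is a perfect square of a positive quantity regardless of multiplicities; your treatment of the singular set $\mathcal{S}$ at the end makes this explicit.
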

\begin{proof}
let $z=(x,y)\in\mathbb{S}^7$ and $Z\in\mathcal{V}_z$.  By Lemma \ref{eig}, the eigenvalues of the skew-symmetric operator $J_Z$  are $\pm2i(\|x\|^2\pm\|y\|^2)\|Z\|$. We assume that $z$ fulfills:
 $$\|x\|\neq\|y\| \hspace{2ex} \text{\it  and }\hspace{2ex} x \neq 0.$$
Such points form a dense subset in $\mathbb{S}^7$ and therefore, due to the {smoothness  of the local assignment $z\longmapsto c_0^T(z)$ (see \cite{Verd})} we only need to compute $c_0^T(z)$ at such points. The  advantage of considering such points is that the eigenvalues of the map $J_Z$ for all $Z\in\mathcal{V}_z$, are simple. Hence the expression of the function $W(\tau)$ take the form: 
$$\det{\left(\frac{iJ_\tau/2}{\sinh{(iJ_\tau/2)}}\right)}=\left(\frac{\|\tau\|}{\sinh{\|\tau\|}}\right)^2\left(\frac{(\|x\|^2-\|y\|^2)\|\tau\|}{\sinh{((\|x\|^2-\|y\|^2)\|\tau\|)}}\right)^2.$$
Hence, by (\ref{firstheat}) and (\ref{beals}), we can write:
\begin{align*}
c_0^T(z)&=\frac{1}{(2\pi)^5}\int_{\mathbb{R}^3} \sqrt{\det{\left(\frac{iJ_\tau/2}{\sinh{iJ_\tau/2}}\right)}}\frac{d\tau}{g(z)}\\
&=\frac{1}{(2\pi)^5g(z)}\int_{\mathbb{R}^3}\frac{\|\tau\|}{\sinh{\|\tau\|}}\cdot\frac{(\|x\|^2-\|y\|^2)\|\tau\|}{\sinh{(\|x\|^2-\|y\|^2)\|\tau\|}}d\tau.
\end{align*}
\end{proof}
\begin{rem}
At points $z=(x,y)\in\mathbb{S}^7$ with $x=0$ or $y=0$, a straightforward computation  using the representation (\ref{max}) shows that the maps $J_{X_5},J_{X_6}$ and $J_{X_7}$ fulfill the quaternionic relations  and hence the tangent groups of the subriemannian manifolds $\mathbb{S}^7_T$ and $\mathbb{S}^7_Q$ are isometric. 
Furthermore, to compute the first heat invariant at $z$ we only need to know the subriemannian structure at this point and hence it follows that at these points 
 the first heat invariants coincide:   
$$c_0^Q(z)=c_0^T(z).$$
Also it is not hard to see that the infimum of $c_0^T(z)$ over $\mathbb{S}^7$ is attained at these points and therefore we can write

\begin{equation}\label{inf}
\inf\{c_0^T(z):z\in\mathbb{S}^7\}=\hat{c}_0^Q.
\end{equation}
Here $\hat{c}_0^Q$ denotes the value of the constant function $z\longmapsto c_0^Q(z)$ which will be calculated explicitly below.
\vspace{1mm} \par 
We remark that the remaining heat invariants $c_1,c_2,\cdots$ might not be equal at these special points.  In fact, in order to compute these numbers we have to take into account the local behavior of the corresponding subriemannian structures at such points.
\end{rem}
As a corollary we prove now that the subriemannian manifolds $\mathbb{S}^7_T$ and $\mathbb{S}^7_Q$ are not {\it isospectral} with respect to the 
intrinsic sublaplacians:
\begin{cor}\label{noniso}
Let $\mathbb{S}^7_T$ and $\mathbb{S}^7_Q$ be considered with the induced Popp measures. Then the intrinsic sublaplacians 
$\Delta_{\textup{sub}}^T$ and $\Delta_{\textup{sub}}^Q$ are not isospectral.
\end{cor}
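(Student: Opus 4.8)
I want to separate the two sublaplacians by the leading term in the small-time asymptotics of their heat traces. Since $\operatorname{Tr}\bigl(e^{-\frac t2\Delta_{\textup{sub}}}\bigr)=\sum_\lambda e^{-\frac t2\lambda}$ depends only on the spectrum, isospectrality of $\Delta_{\textup{sub}}^{T}$ and $\Delta_{\textup{sub}}^{Q}$ would make these two heat traces identical functions of $t>0$. On the compact manifold $\mathbb{S}^7$ the on-diagonal expansion $K_t(q,q)=t^{-Q(q)/2}\bigl(c_0(q)+c_1(q)t+\cdots\bigr)$ of Section \ref{subgeo} has remainders uniform in $q$ (see \cite{Bena,Verd}), so it may be integrated against the Popp measure; and for both structures the distribution has rank $4$, step $2$ and corank $3$, so the Hausdorff dimension is $Q=4+2\cdot3=10$ everywhere. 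Hence, as $t\to0$,
\begin{gather*}
\operatorname{Tr}\bigl(e^{-\frac t2\Delta_{\textup{sub}}^{T}}\bigr)=t^{-5}\Bigl(\int_{\mathbb{S}^7}c_0^{T}\,d\mathcal{P}_T+O(t)\Bigr), \\
\operatorname{Tr}\bigl(e^{-\frac t2\Delta_{\textup{sub}}^{Q}}\bigr)=t^{-5}\Bigl(\int_{\mathbb{S}^7}c_0^{Q}\,d\mathcal{P}_Q+O(t)\Bigr),
\end{gather*}
and isospectrality would force $\int_{\mathbb{S}^7}c_0^{T}\,d\mathcal{P}_T=\int_{\mathbb{S}^7}c_0^{Q}\,d\mathcal{P}_Q$. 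The core of the argument is that this equality fails, with the left-hand side strictly larger.

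To evaluate both integrals I would pass to the standard volume $d\sigma$. Write $\Psi(c):=\int_{\mathbb{R}^3}\frac{\|\tau\|}{\sinh\|\tau\|}\cdot\frac{c\|\tau\|}{\sinh(c\|\tau\|)}\,d\tau$ for $c\in[0,1]$. On the trivializable side $\mathcal{P}_T=g\,d\sigma$ by Lemma \ref{popp} while the Beals--Gaveau--Greiner formula (\ref{beals}) contributes precisely the factor $1/g(z)$ to $c_0^T(z)$; thus $g$ cancels and, since $s\mapsto s/\sinh s$ is even,
\begin{equation*}
\int_{\mathbb{S}^7}c_0^T\,d\mathcal{P}_T=\frac{1}{(2\pi)^5}\int_{\mathbb{S}^7}\Psi\bigl(\bigl|\,\|x\|^2-\|y\|^2\,\bigr|\bigr)\,d\sigma(z).
\end{equation*}
On the quaternionic side $\mathcal{P}_Q=c_Q\,d\sigma$ with a positive constant $c_Q$ (Lemma \ref{Popp_volume_QSS}), and the same cancellation in (\ref{beals}) gives $c_0^Q\,c_Q=\frac{1}{(2\pi)^5}\int_{\mathbb{R}^3}W_Q(\tau)\,d\tau$; since the quaternionic Heisenberg tangent algebra has maps $J_Z$ with eigenvalues $\pm 2i\|Z\|$ of multiplicity two, $W_Q(\tau)=\bigl(\|\tau\|/\sinh\|\tau\|\bigr)^2$, whence $\int_{\mathbb{S}^7}c_0^Q\,d\mathcal{P}_Q=\frac{\Psi(1)}{(2\pi)^5}\,\sigma(\mathbb{S}^7)$. (Alternatively one computes $\mathcal{P}_Q=16^{-3/2}d\sigma$ exactly as in Lemma \ref{popp} and combines this with the value of $\hat c_0^Q$ coming from (\ref{inf}).)

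Finally I would compare pointwise: $s\mapsto s/\sinh s$ is strictly decreasing on $(0,\infty)$, hence $\Psi$ is strictly decreasing on $[0,1]$; as $\bigl|\,\|x\|^2-\|y\|^2\,\bigr|\le1$ with equality exactly on $\{x=0\}\cup\{y=0\}$, a $3$-dimensional and therefore $\sigma$-null submanifold of $\mathbb{S}^7$, it follows that $\Psi\bigl(\bigl|\,\|x\|^2-\|y\|^2\,\bigr|\bigr)\ge\Psi(1)$ with strict inequality off a set of measure zero. This is exactly (\ref{inf}) combined with the Remark that the infimum of $c_0^T$ over $\mathbb{S}^7$ is attained only on $\{x=0\}\cup\{y=0\}$. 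Consequently
\begin{equation*}
\int_{\mathbb{S}^7}c_0^T\,d\mathcal{P}_T
=\frac{1}{(2\pi)^5}\int_{\mathbb{S}^7}\Psi\bigl(\bigl|\,\|x\|^2-\|y\|^2\,\bigr|\bigr)\,d\sigma
>\frac{\Psi(1)}{(2\pi)^5}\,\sigma(\mathbb{S}^7)
=\int_{\mathbb{S}^7}c_0^Q\,d\mathcal{P}_Q,
\end{equation*}
contradicting the equality that isospectrality would impose. The step I expect to require the most care is the first one: justifying that $\int_{\mathbb{S}^7}c_0\,d\mathcal{P}$ genuinely is the leading heat-trace coefficient — that the on-diagonal asymptotics can be integrated with uniform control and that the heat semigroups are trace class on this compact manifold — for which I would lean on \cite{Verd}. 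The cancellation of the Popp densities in the second step is then a routine consequence of Lemmas \ref{popp} and \ref{Popp_volume_QSS} together with (\ref{beals}).
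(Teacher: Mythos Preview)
Your proposal is correct and follows essentially the same route as the paper: both arguments compute $\int_{\mathbb{S}^7}c_0^T\,d\mathcal{P}_T$ and $\int_{\mathbb{S}^7}c_0^Q\,d\mathcal{P}_Q$ via the Beals--Gaveau--Greiner formula, observe the cancellation of the Popp density so that the comparison reduces to an integral in $d\sigma$, and then exploit the strict monotonicity of $s\mapsto s/\sinh s$ to obtain a strict inequality. You are somewhat more explicit than the paper about why the integrated $c_0$ is a spectral invariant (the paper simply states the implication), and you correctly flag the uniform-remainder issue as the point requiring care; the paper implicitly relies on \cite{Verd} for this as well.
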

\begin{proof}
By considering the subriemannian manifold $\mathbb{S}^7_Q$ as a quaternionic contact manifold and using the quaternionic relations of the 
almost complex structures $I_{\bf l}$ for ${\bf l}\in\{{\bf i},{\bf j},{\bf k}\}$, we see that the Popp measure is given by, (see Lemma \ref{Popp_volume_QSS}):
$$\mathcal{P}_Q(z)=\frac{1}{(16)^{3/2}}d\sigma(z).$$
Furthermore, the nilpotent approximation of $\mathbb{S}^7_Q$ at $z\in\mathbb{S}^7$ is isomorphic to the standard quaternionic Heisenberg group. Hence the first heat invariant of $\mathbb{S}^7_Q$ is given by
$$c_0^Q(z)=\frac{16^{3/2}}{(2\pi)^5}\int_{\mathbb{R}^3}\left(\frac{\|\tau\|}{\sinh{\|\tau\|}}\right)^2d\tau\text{ for }z\in\mathbb{S}^7.$$
We set 
\begin{multline}\label{isos}
c_0^T-c_0^Q:=\\\frac{1}{(2\pi)^5}\int_{\mathbb{S}^7}\int_{\mathbb{R}^3}\frac{\|\tau\|}{\sinh{\|\tau\|}}\left(\frac{(\|x\|^2-\|y\|^2)\|\tau\|}{\sinh{((\|x\|^2-\|y\|^2)\|\tau\|)}}-\frac{\|\tau\|}{\sinh{\|\tau\|}}\right)d\tau d\sigma(z).
\end{multline}
 Note that the function $u\longmapsto u/\sinh{(u)}$ is  even, smooth and monotone decreasing on the interval $[0,\infty[$. This shows that the integrand in (\ref{isos}) is a  non-negative function on $\mathbb{S}^7\times\mathbb{R}^3$ and non-vanishing on an open dense subset. Therefore $c_0^T>c_0^Q$ and 
 the subriemannian manifolds $\mathbb{S}^7_Q$ and $\mathbb{S}^7_T$ cannot be isospectral.
\end{proof}

\section{Sublaplacian induced by the standard measure}
\label{Chapter_9}
If we consider the subriemannian manifold $\mathbb{S}^7_T$ endowed with the standard volume $d\sigma$, then the corresponding sublaplacian 
$\widetilde{\Delta}_{\textup{sub}}^{T}$ will be a sum of squares:
\begin{equation}\label{sum_of_squares_operator}
\widetilde{\Delta}_{\textup{sub}}^{T}= -\sum_{i=1}^{4}X_i^2.
\end{equation}
 Here $X_i=X(A_i)$ for $i=1, \ldots, 4$ with $A_j$ defined in (\ref{matrices_A_j}) and Lemma \ref{Lemma_complement_to_anti-commuting_matrices} 
denotes the system of linear vector fields generating the distribution $\mathcal{H}_T$ of $\mathbb{S}^7_T$.
According to \cite{Hoe67} the operator (\ref{sum_of_squares_operator}) is subelliptic, positive and with discrete spectrum consisting of eigenvalues. We recall that a part of 
this spectrum has been determined in \cite{BFI}. Moreover, Corollary 5.4 of  \cite{BFI} implies that a different choice of the generating anti-commuting 
skew-symmetric matrices $A_j$ leads to a sublaplacian which is unitary equivalent to (\ref{sum_of_squares_operator}) and therefore has the same spectrum. Hence, when 
studying the spectrum of the trivializable subriemannian structure, we can restrict ourselves to a specific choice the generators of a Clifford algebra 
(s. Remarks \ref{Remark_different_choices_of_generators} and \ref{Remark_Isospectrality_of_equivalent_triv_structures}). 
\vspace{1mm}\par 
In this section a relation between the spectrum of $\widetilde{\Delta}_{\textup{sub}}^{T}$ and the spectrum of the sublaplacian 
\begin{equation*}
\Delta_{\textup{sub}}^{Q}=\Delta_{\mathbb{S}^7} + X(A_6)^2+X(A_7)^2+X(A_6A_7)^2 
\end{equation*}
induced by the quaternionic Hopf fibration (s. \cite{Baud_Wang}) will be shown. Here 
\begin{equation*}
 \Delta_{\mathbb{S}^7}= - \sum_{j=1}^7 X(A_j)^2
\end{equation*} 
denotes the Laplace-Beltrami operator on $\mathbb{S}^7$ with respect to the standard metric. Via the inclusion $\mathbb{S}^3 \subset (\mathbb{H}, *)$ and for 
$\ell\in \{ {\bf i}, {\bf j}, {\bf k} \}$ consider the 
vector fields: 
\begin{equation*} 
W_{\ell}f(z)= {\frac{\rm d}{\rm dt}_|}_{t=0}f\big{(} z * e^{t{\ell}} \big{)}  \hspace{3ex}\mbox{\it where} \hspace{3ex} f \in C^{\infty}(\mathbb{S}^3). 
\end{equation*}
\par 
By the same formula $W_{\ell}$ can be interpreted as a (linear) vector field on $\mathbb{R}^4\cong\mathbb{H}$. A direct calculation using the decomposition 
$(x,y) \in \mathbb{S}^7 \subset \mathbb{R}^4\times \mathbb{R}^4 \subset \mathbb{H}^2$ and the form of the matrices in (\ref{matrices_A_j}) shows: 
\begin{align}\label{representation_tensor_SL_1}
\widetilde{\Delta}_{\textup{sub}}^{T}&=\Delta_{\mathbb{S}^7} - \Delta_{\mathbb{S}^3} \otimes I - I \otimes \Delta_{\mathbb{S}^3} - 2B\\
\Delta_{\textup{sub}}^Q&=\Delta_{\mathbb{S}^7} - \Delta_{\mathbb{S}^3} \otimes I - I \otimes \Delta_{\mathbb{S}^3} + 2B. \label{representation_tensor_SL_2}
\end{align}
Here $\Delta_{\mathbb{S}^3}=-\sum_{\ell \in \{{\bf i}, {\bf j}, {\bf k}\}} W_{\ell}^2$ denotes the Laplace-Beltrami operator on $\mathbb{S}^3$ with respect to the 
standard metric and 
\begin{equation}\label{operator_B}
B:= \sum_{\ell \in \{ {\bf i}, {\bf j}, {\bf k}\}} W_{\ell} \otimes W_{\ell}. 
\end{equation}
The tensor product notation $A \otimes C$ means that an operator $A$ acts with respect to the variable $x$ and $C$ with respect to $y$. 
Note that $B$ in (\ref{operator_B}) vanishes on smooth functions {$f(x,y)= \tilde{f}(x)$} and $g(x,y)= \tilde{g}(y)$ which only depend on $x$ and 
$y$ of $\mathbb{R}^4$, respectively. Therefore,  $\widetilde{\Delta}_{\textup{sub}}^{T}$ and $\Delta_{\textup{sub}}^Q$ 
act in the same way on functions $g$ and $f$ of the  above type. 
\vspace{1ex} \par
With the notation $\omega=(\omega_1, \omega_2)\in \mathbb{R}^4 \times \mathbb{R}^4$ we write $K_t^{\textup{Q}}(\omega,\textup{np})= \widetilde{k}_t^{\textup{Q}}(\omega_1)$ for the heat kernel of 
 $\Delta_{\textup{sub}}^{\textup{Q}}$ at the north pole $\textup{np}=(1,0, \ldots, 0)\in \mathbb{S}^7 \subset \mathbb{R}^8$. The function  $ \widetilde{k}_t^{\textup{Q}}$ 
 has been calculated in \cite{Baud_Wang} and only depends on $\omega_1$. It  follows from the previous remark that: 
\begin{equation}\label{GL_differential_equation_Delta_4_Delta_Hopf}
\widetilde{\Delta}_{\textup{sub}}^{T}K_t^{\textup{Q}}(\cdot,\textup{np})= \Delta_{\textup{sub}}^{\textup{Q}}K_t^{\textup{Q}}(\cdot,\textup{np})=- \frac{\rm d}{\rm dt} K_t^{\textup{Q}}(\cdot,\textup{np}). 
\end{equation}
\par 
Choose an orthonormal system $[\phi_{\ell}\: : \: \ell \in \mathbb{N}]$ of $L^2(\mathbb{S}^7)=L^2(\mathbb{S}^7, \sigma)$ consisting of smooth eigenfunctions of $\widetilde{\Delta}_{\textup{sub}}^T$ 
with corresponding eigenvalues $\lambda_{\ell}\geq 0$. We obtain an expansion of the heat kernel: 
\begin{equation*}
K_t^{\textup{Q}}(\omega,\textup{np})=\sum_{\ell=1}^{\infty} c_{\ell}(t) \phi_{\ell}(\omega)=\sum_{\ell=1}^{\infty} c_{\ell}(t) \phi_{\ell}(\omega_1,0), 
\end{equation*}
which converges in $C^{\infty}(\mathbb{S}^7)$. From (\ref{GL_differential_equation_Delta_4_Delta_Hopf}) one concludes that 
$c_{\ell}^{\prime}(t)+ \lambda_{\ell} \cdot c_{\ell}(t)=0$ for each $\ell \in \mathbb{N}$. Hence there are constants $\gamma_{\ell}$ such that: 
\begin{equation*}
c_{\ell}(t)= \gamma_{\ell} e^{-\lambda_{\ell}t} \hspace{4ex} \mbox{\it where} \hspace{4ex} t >0. 
\end{equation*}
Moreover, for all $\ell \in \mathbb{N}$: 
\begin{equation*}
\phi_{\ell}(\textup{np})= \lim_{t\downarrow 0} \int_{\mathbb{S}^7} \phi_{\ell}(\omega) K_t^{\textup{Q}}(\omega, \textup{np})d\sigma(\omega) 
= \lim_{t \downarrow 0} c_{\ell}(t)= \gamma_{\ell}. 
\end{equation*}
Let $K_t^{T}(\omega, \textup{np})$ denote the heat kernel of $\widetilde{\Delta}_{\textup{sub}}^T$. From our calculation we conclude: 
\begin{equation}\label{K_H_=_K_4}
K_t^{\textup{Q}}(\omega, \textup{np})= \sum_{\ell=1}^{\infty}  e^{- \lambda_{\ell} t} \phi_{\ell}(\omega) \phi_{\ell}(\textup{np}).
\end{equation}
\par 
On the other hand, we can choose an orthonormal basis $[\psi_{\ell} \: : \: \ell \in \mathbb{N} ]$ of $L^2(\mathbb{S}^7)$ consisting of eigenfunctions of 
$\Delta_{\textup{sub}}^{\textup{Q}}$ with corresponding eigenvalue sequence $(\mu_{\ell})_{\ell \in \mathbb{N}}$. We write: 
\begin{equation}\label{K_H_=_K_4_second}
K_t^{\textup{Q}}(\omega, \textup{np})=\sum_{\ell=1}^{\infty} e^{- \mu_{\ell}t} \psi_{\ell}(\omega) \psi_{\ell}(\textup{np})= \sum_{\ell=1}^{\infty} e^{- \widetilde{\mu}_{\ell} t} \Psi_{\ell} (\omega). 
\end{equation}
On the right hand side we have used the definition: 
\begin{equation*}
\Psi_{\ell} (\omega):=\sum_{j \: \textup{s.t.} \atop \mu_j = \widetilde{\mu}_{\ell}} \psi_j(\omega) \psi_j(\textup{np}), 
\end{equation*}
where $0 \leq  \widetilde{\mu}_1 < \widetilde{\mu}_2 < \widetilde{\mu}_3 \ldots $ denotes the sequence of distinct eigenvalues of 
$\Delta_{\textup{sub}}^{\textup{Q}}$ in increasing order. 
We write $m(\mu)$ for the multiplicity of an eigenvalue $\mu$ of $\Delta_{\textup{sub}}^{\textup{Q}}$.
\begin{lem}\label{Lemma_multiplicities}
For $\ell \in \mathbb{N}$ the sum $\sum_{\mu_j= \widetilde{\mu}_{\ell}} |\psi_{j}(x)|^2\equiv \| \Psi_{\ell}\|_{L^2(\mathbb{S}^7)}^2$ is constant on $\mathbb{S}^7$ and 
$$m(\mu_{\ell})= \textup{vol}(\mathbb{S}^7) \| \Psi_{\ell}\|_{L^2(\mathbb{S}^7)}^2 \ne 0.$$  
\end{lem}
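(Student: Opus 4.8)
The plan is to recognize $\Psi_\ell$ as a slice of the Schwartz kernel of the spectral projection of $\Delta_{\textup{sub}}^{\textup{Q}}$ onto its $\widetilde\mu_\ell$-eigenspace, and then to exploit that ${\bf Sp}(2)$ acts transitively on $\mathbb{S}^7$ by subriemannian isometries of $\mathbb{S}^7_Q$. First I would note that $\Delta_{\textup{sub}}^{\textup{Q}}$ has real coefficients, so the orthonormal basis $[\psi_\ell : \ell\in\mathbb N]$ of $L^2(\mathbb{S}^7,\sigma)$ may be chosen real-valued, and these eigenfunctions are smooth by hypoellipticity. For the $\ell$-th distinct eigenvalue $\widetilde\mu_\ell$ I write $P_\ell$ for the orthogonal projection onto the eigenspace $\mathcal E_\ell$, which is finite-dimensional since $\Delta_{\textup{sub}}^{\textup{Q}}$ is subelliptic on the compact manifold $\mathbb{S}^7$ and hence has discrete spectrum. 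Its (smooth) Schwartz kernel is $E_\ell(x,y)=\sum_{\mu_j=\widetilde\mu_\ell}\psi_j(x)\psi_j(y)$, so that by construction $\Psi_\ell(\cdot)=E_\ell(\cdot,\textup{np})$ while $E_\ell$ is real and symmetric.

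The central step is the ${\bf Sp}(2)$-invariance of $\Delta_{\textup{sub}}^{\textup{Q}}$. By Lemma~\ref{Popp_volume_QSS} the Popp measure $\mathcal P_Q$ of the quaternionic structure is a constant multiple of the standard volume $\sigma$, so $\Delta_{\textup{sub}}^{\textup{Q}}$ is the intrinsic sublaplacian of $\mathbb{S}^7_Q$; since every $g\in{\bf Sp}(2)\subset\mathcal I(\mathbb{S}^7_Q)\cap{\bf O}(8)$ is a subriemannian isometry preserving $\sigma$, the induced unitary action of ${\bf Sp}(2)$ on $L^2(\mathbb{S}^7,\sigma)$ commutes with $\Delta_{\textup{sub}}^{\textup{Q}}$ and hence with each $P_\ell$. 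Passing to Schwartz kernels I get $E_\ell(g^{-1}x,g^{-1}y)=E_\ell(x,y)$ for all $g\in{\bf Sp}(2)$; setting $y=x$ shows that $x\mapsto E_\ell(x,x)=\sum_{\mu_j=\widetilde\mu_\ell}|\psi_j(x)|^2$ is ${\bf Sp}(2)$-invariant, hence constant on $\mathbb{S}^7$ because ${\bf Sp}(2)$ acts transitively there (Section~\ref{quaho}).

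It then remains to evaluate this constant and the multiplicity. Using $P_\ell^2=P_\ell$ together with the symmetry of $E_\ell$, and reading off the Schwartz kernel of $P_\ell\circ P_\ell$ at $(\textup{np},\textup{np})$, one obtains
\begin{equation*}
\|\Psi_\ell\|_{L^2(\mathbb{S}^7)}^2=\int_{\mathbb{S}^7}E_\ell(\textup{np},\omega)\,E_\ell(\omega,\textup{np})\,d\sigma(\omega)=E_\ell(\textup{np},\textup{np})=\sum_{\mu_j=\widetilde\mu_\ell}|\psi_j(\textup{np})|^2,
\end{equation*}
which is precisely the constant value of $x\mapsto\sum_{\mu_j=\widetilde\mu_\ell}|\psi_j(x)|^2$. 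Integrating this constant over $\mathbb{S}^7$ and using $\int_{\mathbb{S}^7}E_\ell(x,x)\,d\sigma(x)=\operatorname{tr}(P_\ell)=\dim\mathcal E_\ell=m(\widetilde\mu_\ell)$ then gives $m(\widetilde\mu_\ell)=\textup{vol}(\mathbb{S}^7)\,\|\Psi_\ell\|_{L^2(\mathbb{S}^7)}^2$; and since $\widetilde\mu_\ell$ is a genuine eigenvalue, $m(\widetilde\mu_\ell)\ge 1$, forcing $\|\Psi_\ell\|_{L^2(\mathbb{S}^7)}^2>0$.

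I do not expect a real obstacle: the only ingredient that is not pure bookkeeping is the ${\bf Sp}(2)$-invariance of $\Delta_{\textup{sub}}^{\textup{Q}}$ (equivalently of $P_\ell$) together with transitivity of the ${\bf Sp}(2)$-action, both already recorded in Section~\ref{quaho} and Lemma~\ref{Popp_volume_QSS}; the remainder is the routine projection/trace computation above. Note in particular that this lemma does not use the heat-kernel expansions \eqref{K_H_=_K_4} and \eqref{K_H_=_K_4_second}, which enter only in the subsequent comparison of $\widetilde\Delta_{\textup{sub}}^{T}$ and $\Delta_{\textup{sub}}^{\textup{Q}}$.
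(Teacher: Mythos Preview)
Your proof is correct and follows essentially the same approach as the paper: both arguments rest on the commutation $[\Delta_{\textup{sub}}^{\textup{Q}},V_g]=0$ for $g\in{\bf Sp}(2)$ together with the transitivity of the ${\bf Sp}(2)$-action, then integrate the constant diagonal to recover the multiplicity. The only cosmetic difference is packaging---you phrase things via the Schwartz kernel $E_\ell$ of the spectral projection $P_\ell$ and its bi-invariance, whereas the paper explicitly replaces the basis $(\psi_j)$ by $(\psi_j\circ g)$ to reach the same conclusion.
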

\begin{proof}
Since $\{\psi_{\ell}\}_{\ell}$ is an orthonormal basis of $L^2(\mathbb{S}^7)$ we have: 
\begin{equation*}
\| \Psi_{\ell}\|_{L^2(\mathbb{S}^7)}^2= \sum_{j \: \textup{s.t.} \atop \mu_{j}= \widetilde{\mu}_{\ell}} |\psi_{j}(\textup{np})|^2. 
\end{equation*}
Consider the subriemannian isometry group $\mathcal{I}(\mathbb{S}_Q^7)$. Recall that ${\bf Sp}(2)\subset  \mathcal{I}(\mathbb{S}_Q^7)$ and ${\bf Sp}(2)$ acts transitively on $\mathbb{S}^7$ (see the proof of Lemma \ref{Popp_volume_QSS}). For all $g \in {\bf Sp}(2)$ we define the unitary operator $V_g$ on 
$L^2(\mathbb{S}^7)$ by composition, i.e. $V_gf:= f \circ g$ for all $f \in L^2(\mathbb{S}^7)$. Note that 
\begin{equation*}
\big{[} \Delta^{\textup{Q}}_{\textup{sub}}, V_g\big{]}=0 
\end{equation*}
and put $\psi^g_{\ell}:= V_g\psi_{\ell}= \psi_{\ell} \circ g$. Then $\{\psi^g_{\ell}\}_{\ell}$ defines an orthonormal basis of $L^2(\mathbb{S}^7)$ consisting of 
eigenfunctions of $\Delta^{\textup{Q}}_{\textup{sub}}$ corresponding to the sequence $(\mu_{\ell})_{\ell}$ of eigenvalues, as well, and the heat kernel expansion 
of $\Delta_{\textup{sub}}^{\textup{Q}}$ can be rewritten as: 
\begin{equation*}
K_t^{\textup{Q}}(\omega, \textup{np})= \sum_{\ell=1}^{\infty} e^{- \mu_{\ell}t} \psi_{\ell} \circ g(\omega) \cdot \psi_{\ell} \circ g(\textup{np}), \hspace{2ex} \mbox{\it where} \hspace{2ex} \omega \in \mathbb{S}^7. 
\end{equation*}
It follows for all $g \in H$: 
\begin{equation*}
\| \Psi_{\ell}\|_{L^2(\mathbb{S}^7)}^2= \sum_{j \: \textup{s.t.} \atop \mu_j= \widetilde{\mu}_{\ell}} |\psi_{j}\circ g(\textup{np})|^2. 
\end{equation*}
Since ${\bf Sp}(2)$ acts transitively on $\mathbb{S}^7$ we conclude that the finite sum below is constant on $\mathbb{S}^7$ with value: 
\begin{equation}\label{equality_of_functions_S_7}
\sum_{j \: \textup{s.t.}\atop \mu_j=\widetilde{\mu}_{\ell}}|\psi_j(x)|^2 \equiv \| \Psi_{\ell}\|_{L^2(\mathbb{S}^7)}^2, \hspace{8ex} (x \in \mathbb{S}^7). 
\end{equation}
Hence: 
\begin{equation*}
m(\widetilde{\mu}_{\ell})
=\# \big{\{}j \: : \mu_j= \widetilde{\mu}_{\ell} \big{\}}= \int_{\mathbb{S}^7} \sum_{j \: \textup{s.t.} \atop \mu_{j}=\widetilde{\mu}_{\ell}}|\psi_{j}(x)|^2d \sigma(x)= 
\textup{vol}(\mathbb{S}^7)\| \Psi_{\ell}\|_{L^2(\mathbb{S}^7)}^2. 
\end{equation*}
This proves the assertion. 
\end{proof}
Lemma \ref{Lemma_multiplicities} implies that in each eigenspace of $\Delta_{\textup{sub}}^{\textup{Q}}$ there is an element $\psi$ such that $\psi(\textup{np}) \ne 0$. 
As usual let $\sigma(A)$ denote the spectrum of an operator $A$ and put: 
\begin{equation*}
\Lambda:= \Big{\{} \lambda \in \sigma \big{(}\widetilde{\Delta}_{\textup{sub}}^{\textup{T}}\big{)} \: : \: \exists \: \phi \in  
\textup{ker}\big{(}\widetilde{\Delta}_{\textup{sub}}^{\textup{T}}- \lambda\big{)}\: \textup{\it such that } \: \phi(\textup{np}) \ne 0\Big{\}}. 
\end{equation*}
Consider the following subset of distinct eigenvalues: 
\begin{equation*}
\Lambda:=\big{\{} \widetilde{\lambda}_{\ell} \in \Lambda \: : \: \widetilde{\lambda}_1 < \widetilde{\lambda}_2 < \ldots\big{\}} \subset  
\sigma \big{(}\widetilde{\Delta}_{\textup{sub}}^{\textup{T}}\big{)}. 
\end{equation*}
From (\ref{K_H_=_K_4}) and (\ref{K_H_=_K_4_second}) we have for all $t>0$: 
\begin{equation} \label{GL_equality_of_heat_kernels}
\sum_{\ell=1}^{\infty} e^{-\widetilde{\lambda}_{\ell} t} \Phi_{\ell}(\omega) = \sum_{\ell=1}^{\infty} e^{- \widetilde{\mu}_{\ell} t} \Psi_{\ell} (\omega), 
\end{equation}
where for each $\widetilde{\lambda}_{\ell} \in \Lambda $: 
\begin{equation*}
\Phi_{\ell}(\omega) := \sum_{j \: \textup{s.t.} \atop \lambda_j= \widetilde{\lambda}_{\ell}} \phi_j(\omega) \phi_j (\textup{np}).
\end{equation*}
Note that $\Phi_{\ell}(\textup{np}) \ne 0$ by definition of $\Lambda$. 
\begin{thm}\label{Lemma_inclusion_of_spectra_sublaplace}
We have the inclusion of spectra $\Lambda= \sigma(\Delta^{\textup{Q}}_{\textup{sub}}) \subset \sigma (\widetilde{\Delta}_{\textup{sub}}^{\textup{T}})$. 
\end{thm}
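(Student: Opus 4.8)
The plan is to exploit the two heat-kernel expansions \eqref{K_H_=_K_4} and \eqref{K_H_=_K_4_second} for one and the same kernel $K_t^{\textup{Q}}(\cdot,\textup{np})$, together with the linear independence of distinct exponentials $e^{-ct}$, to match the two discrete spectra term by term. Concretely, \eqref{GL_equality_of_heat_kernels} asserts that for every $t>0$
\[
\sum_{\ell=1}^{\infty} e^{-\widetilde{\lambda}_{\ell} t}\,\Phi_{\ell}(\omega)
=\sum_{\ell=1}^{\infty} e^{-\widetilde{\mu}_{\ell} t}\,\Psi_{\ell}(\omega),
\]
where on the left the $\widetilde\lambda_\ell$ run over the distinct elements of $\Lambda\subset\sigma(\widetilde\Delta_{\textup{sub}}^{\textup{T}})$ and on the right the $\widetilde\mu_\ell$ run over all distinct eigenvalues of $\Delta_{\textup{sub}}^{\textup{Q}}$. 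First I would fix a point $\omega\in\mathbb{S}^7$ and regard both sides as Dirichlet series in $t$; since the series converge in $C^\infty(\mathbb{S}^7)$ (hence absolutely for $t$ in a right half-line), and since the functions $t\mapsto e^{-ct}$ for distinct $c\geq 0$ are linearly independent over any domain, a standard uniqueness argument for Dirichlet/exponential series forces the two index sets of exponents that carry a \emph{nonzero} coefficient function to coincide, and the corresponding coefficient functions to be equal. The key point keeping the bookkeeping honest is that $\Phi_\ell(\textup{np})\neq 0$ by the very definition of $\Lambda$, and $\Psi_\ell(\textup{np})\neq 0$ by Lemma~\ref{Lemma_multiplicities}; thus no term on either side has an identically vanishing coefficient, so the matching of exponents is forced with no spurious cancellation.

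Carrying this out: evaluate \eqref{GL_equality_of_heat_kernels} at $\omega=\textup{np}$, obtaining $\sum_\ell e^{-\widetilde\lambda_\ell t}\Phi_\ell(\textup{np})=\sum_\ell e^{-\widetilde\mu_\ell t}\Psi_\ell(\textup{np})$ with all coefficients nonzero. Ordering both exponent sequences increasingly and comparing leading terms as $t\to\infty$ (then peeling off terms inductively) yields $\widetilde\lambda_\ell=\widetilde\mu_\ell$ and $\Phi_\ell(\textup{np})=\Psi_\ell(\textup{np})$ for all $\ell$. In particular $\{\widetilde\lambda_\ell\}=\{\widetilde\mu_\ell\}$, i.e. $\Lambda=\sigma(\Delta_{\textup{sub}}^{\textup{Q}})$ as sets of distinct eigenvalues. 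Since every element of $\Lambda$ lies in $\sigma(\widetilde\Delta_{\textup{sub}}^{\textup{T}})$ by construction, this gives the asserted inclusion $\sigma(\Delta_{\textup{sub}}^{\textup{Q}})=\Lambda\subset\sigma(\widetilde\Delta_{\textup{sub}}^{\textup{T}})$.

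The main obstacle I anticipate is purely one of rigor rather than of idea: justifying the Dirichlet-series uniqueness step in the presence of the spatially varying coefficient functions $\Phi_\ell,\Psi_\ell$. One clean way around this is to integrate \eqref{GL_equality_of_heat_kernels} against a well-chosen test function on $\mathbb{S}^7$ — or simply evaluate at $\textup{np}$ as above — to reduce to scalar Dirichlet series with strictly positive coefficients, for which term-by-term identification is elementary (compare asymptotics as $t\to+\infty$ and induct). One should also note in passing why $\Lambda$ is the \emph{full} distinct spectrum appearing on the $\widetilde\Delta_{\textup{sub}}^{\textup{T}}$-side of the kernel expansion \eqref{K_H_=_K_4}: eigenfunctions $\phi_j$ with $\phi_j(\textup{np})=0$ contribute nothing to $K_t^{\textup{Q}}(\cdot,\textup{np})$, so only exponents in $\Lambda$ survive, which is exactly what makes the left-hand side of \eqref{GL_equality_of_heat_kernels} indexed by $\Lambda$ and not by all of $\sigma(\widetilde\Delta_{\textup{sub}}^{\textup{T}})$. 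This is also the reason the statement gives only an inclusion, not equality, of the full spectra.
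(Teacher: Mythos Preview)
Your proposal is correct and follows essentially the same route as the paper: evaluate the identity \eqref{GL_equality_of_heat_kernels} at $\omega=\textup{np}$, use that $\Phi_\ell(\textup{np})\neq 0$ by definition of $\Lambda$ and $\Psi_\ell(\textup{np})\neq 0$ by Lemma~\ref{Lemma_multiplicities}, and then compare leading terms as $t\to\infty$ and induct to identify $\widetilde\lambda_\ell=\widetilde\mu_\ell$ for all $\ell$. Your write-up is in fact more explicit than the paper's about why the left-hand side is indexed precisely by $\Lambda$ and why no spurious cancellation can occur.
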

\begin{proof}
Assume that $\widetilde{\mu}_1 \ne \widetilde{\lambda_1}$. Without loss of generality assume that 
$\widetilde{\lambda}_1 < \widetilde{\mu}_1$. Then 
\begin{equation*}
0 \ne \Phi_1(\textup{np})= \sum_{\ell=1}^{\infty}e^{-(\widetilde{\mu}_{\ell}- \widetilde{\lambda}_1)t}\Psi_{\ell}(\textup{np})-
\sum_{\ell=2}^{\infty} e^{- (\widetilde{\lambda}_{\ell}- \widetilde{\lambda}_1)t} \Phi_{\ell}(\textup{np}). 
\end{equation*}
Since the right hand side tends to zero as $t \rightarrow \infty$ we obtain a contradiction.  Hence $\widetilde{\lambda}_1 = \widetilde{\mu}_1$ and 
\begin{equation*}
\Phi_1(\textup{np})= \Psi_1(\textup{np})=m(\mu_1). 
\end{equation*}
Therefore
\begin{equation*}
\sum_{\ell=2}^{\infty} e^{-\widetilde{\lambda}_{\ell} t} \Phi_{\ell}(\textup{np}) = \sum_{\ell=2}^{\infty} e^{- \widetilde{\mu}_{\ell} t} \Psi_{\ell} (\textup{np}), 
\end{equation*}
and proceeding inductivley in this way we obtain the result. 
\end{proof}
\begin{rem}
The spectrum $\sigma\big{(}\Delta^{\textup{Q}}_{\textup{sub}}\big{)}$ is known explicitly, see \cite{Baud_Wang}.  Moreover, the multiplicities of eigenvalues 
$\lambda\in \Lambda$ with respect to the operators $\Delta^{\textup{Q}}_{\textup{sub}}$ and $\widetilde{\Delta}_{\textup{sub}}^{\textup{T}}$  
may not coincide. The statement in Theorem \ref{Lemma_inclusion_of_spectra_sublaplace} generalizes results in \cite{BFI} where a (smaller) part of the spectrum 
$\sigma(\widetilde{\Delta}_{\textup{sub}}^{T})$ has been calculated. 
\end{rem} 

Using the nilpotent approximation as in Section \ref{small} we can show that the first heat invariant $\tilde{c}_0^{T}(z)$ of $\widetilde{\Delta}_{\textup{sub}}^{T}$ is exactly:
$$\tilde{c}_0^{T}(z)=\frac{1}{16^{3/2}(2\pi)^5}\int_{\mathbb{R}^3}\frac{\|\tau\|}{\sinh{\|\tau\|}}\cdot\frac{(\|x\|^2-\|y\|^2)\|\tau\|}{\sinh{(\|x\|^2-\|y\|^2)\|\tau\|}}d\tau$$
for $z=(x,y)\in\mathbb{S}^7$. Hence using the same arguments as in the proof of Corollary \ref{noniso}, it follows that the operators $\Delta_{\textup{sub}}^Q$ 
and $\widetilde{\Delta}_{\textup{sub}}^{T}$ are not isospectral, as well, i.e. the inclusion of spectra in Theorem \ref{Lemma_inclusion_of_spectra_sublaplace} is strict {or they have the same spectrum but the eigenvalues have different multiplicities. }
\vspace{1mm}
\par 
Since the sublaplacian $\widetilde{\Delta}_{\textup{sub}}^{T}$ is a {\it sum-of-squares operator}, using the vector fields $X_i^{(-1)},X_i^{(0)},X_i^{(1)}$ ($i=1,\cdots,4$) from Lemma \ref{noniso} and the expression (\ref{second_invariant})  we can obtain a formula for the second heat invariant  $\tilde{c}_1^{T}$ which shows how 
this quantity depends on the geometric data $(b_{ij}^k)$. But due to the small symmetry group  of the trivializable SR structure (it does not act transitively)  the 
calculation is complicated and we omit it here. 

\section{Open problems}
Finally, we mention some open problems  which have been left in the analysis of the trivializable subriemannian manifold $\mathbb{S}^7_T$.
\begin{enumerate}
\item What is the significance of the second heat invariant $c_1^{T}$ for the trivializable subriemannian structure on $\mathbb{S}^7$? We recall that in the framework of Riemannian geometry, the second heat invariant can be interpreted as {integrals of curvature tensors over the manifold}. Furthermore, for contact subriemannian structures on 3-dimensional manifolds an interpretation of the second heat invariant in terms of certain curvature terms has given by D. Barilari in \cite{Bari}. 
\item Derive an explicit formula for the heat kernel of the sublaplacian $\Delta_{\textup{sub}}^T$   on $\mathbb{S}^7_T$ and on $\mathbb{S}^{15}_T$ equipped with the rank eight trivializable subriemannian structure of step two in \cite{BFI}.  In case of the quaternionic contact structure such a
 formula is known and can be found in \cite{Baud_Wang}.
\item What is the dimension of the subriemannian isometry group $\mathcal{I}(\mathbb{S}^7_T)$?  
\item  As is known, the Carnot-Carath\'{e}odory distance on $\mathbb{S}_T^{7}$ appears in the exponent of the off-diagonal small time asymptotics of the subelliptic heat kernel of $\Delta^T_{\textup{sub}}$. Can one (at least locally) obtain formulas or estimates on $d$ via a heat kernel analysis? 
\end{enumerate}


 
\end{document}